\documentclass[11pt, twoside]{article}
\pdfoutput=1

\usepackage{graphicx}
\usepackage[caption=false]{subfig}
\captionsetup[subtable]{position=bottom}
\captionsetup[table]{position=bottom}
\usepackage{amsmath}
\usepackage{amssymb,amsfonts}
\usepackage{amsthm}
\usepackage{bm}
\usepackage{mathrsfs}
\usepackage{amssymb}
\usepackage{multirow}

\newcommand{\norm}[1]{\left\lVert#1\right\rVert}
\newcommand{\innOm}[2]{(#1,#2)_\Omega}
\newcommand{\innGa}[2]{\langle #1,#2\rangle_\Gamma}

\usepackage[margin=1in]{geometry}

\usepackage{algorithm}
\usepackage{algorithmic}

\numberwithin{equation}{section}

\theoremstyle{definition}
\newtheorem{theorem}{Theorem}

\newtheorem{proposition}{Proposition}
\newtheorem{lemma}{Lemma}
\newtheorem{corollary}{Corollary}

\newtheorem{remark}{Remark}
\newtheorem{example}{Example}

\usepackage{cite}
\usepackage{hyperref}
\usepackage[nameinlink]{cleveref}

\usepackage{fancyhdr}
\pagestyle{fancy}
\fancyhead[EC]{G.\ Chen, G.\ Fu, J.\ Singler,  Y.\ Zhang}
\fancyhead[OC]{ Embedded DG Methods for Dirichlet Boundary Control Problems}
\fancyhead[L,R]{}
\cfoot{\thepage}

\begin{document}
	
	\title{A Class of Embedded DG Methods for Dirichlet Boundary Control of Convection Diffusion PDEs}

\author{Gang Chen%
	\thanks{School of Mathematics Sciences, University of Electronic Science and Technology of China, Chengdu, China (\mbox{cglwdm@uestc.edu.cn}).}
	\and
	Guosheng Fu
	\thanks{Division of Applied Mathematics, Brown University, RI, USA(\mbox{guoshengfu@brown.edu}).}
	\and
	John~R.~Singler%
	\thanks{Department of Mathematics and Statistics, Missouri University of Science and Technology, Rolla, MO, USA (\mbox{singlerj@mst.edu}.}
	\and
	Yangwen Zhang%
	\thanks{Department of Mathematics Science, University of Delaware, Nework, DE, USA (\mbox{ywzhangf@udel.edu}.}
}

\date{\today}

\maketitle

\begin{abstract}
	We investigated an hybridizable discontinuous Galerkin (HDG) method for a convection diffusion Dirichlet boundary control problem in our earlier work [SIAM J. Numer. Anal. 56 (2018) 2262-2287] and obtained an optimal convergence rate for the control under some assumptions on the desired state and the domain. In this work, we obtain the same convergence rate for the control using a class of embedded DG methods proposed by Nguyen, Peraire and Cockburn [J. Comput. Phys. vol. 302 (2015), pp. 674-692] for simulating fluid flows. Since the global system for  embedded DG methods uses continuous elements,  the number of degrees of freedom for the embedded DG methods are smaller than the HDG method, which uses discontinuous elements for the global system. Moreover, we introduce a new simpler numerical analysis technique to handle low regularity solutions of the boundary control problem. We present some numerical experiments to confirm our theoretical results.
\end{abstract}

\section{Introduction}
\label{intro}
We {study} the following Dirichlet boundary control problem: Minimize the cost functional
\begin{align}
\min J(u)=\frac{1}{2}\| y- y_{d}\|^2_{L^{2}(\Omega)}+\frac{\gamma}{2}\|u\|^2_{L^{2}(\Gamma)}, \quad \gamma>0, \label{cost1}
\end{align}
subject to
\begin{equation}\label{Ori_problem}
\begin{split}
-\varepsilon\Delta y + \bm \beta \cdot \nabla y&=f \quad\text{in}~\Omega,\\
y&=u\quad\text{on}~\partial\Omega,
\end{split}
\end{equation}
where  $\Omega\subset \mathbb{R}^{d}$ $ (d\geq 2)$ is a Lipschitz polyhedral domain with boundary $\Gamma = \partial \Omega$.  In the 2D case,  the optimal control problem \eqref{cost1}-\eqref{Ori_problem} has been proven in  \cite{HuMateosSinglerZhangZhang1} to be  equivalent to the following optimality system
\begin{subequations}\label{eq_adeq}
	\begin{align}
	-\varepsilon\Delta y  + \bm \beta \cdot \nabla y&=f\qquad\qquad\text{in}~\Omega,\label{eq_adeq_a}\\
	y&=u\qquad\qquad\text{on}~\partial\Omega,\label{eq_adeq_b}\\
	-\varepsilon\Delta z - \nabla\cdot(\bm \beta z) &=y-y_d\qquad\text{in}~\Omega,\label{eq_adeq_c}\\
	z&=0\qquad\qquad~\text{on}~\partial\Omega,\label{eq_adeq_d}\\
	\varepsilon\partial_{\bm n} z  -\gamma  u&=0\qquad\qquad~\text{on}~\partial\Omega.\label{eq_adeq_e}
	\end{align}
\end{subequations}

Dirichlet boundary control plays an important role in many applications; see, e.g., \cite{John_Wachsmuth_DBC_NFAO_2009,You_Ding_Zhou_DBC_SICON_1998,Gunzburger_Hou_Svobodny_NS_M2AN_1991,Gunzburger_Hou_Svobodny_Drag_SICON_1992,Hou_Svobodny_NS_JMAA_1993} for flow control problems. Approximating the solution of a Dirichlet boundary control problem can be very difficult since solutions frequently have low regularity. Rigorous convergence results have only been recently obtained for Dirichlet boundary control for the Poisson equation using the continuous Galerkin (CG) method \cite{Casas_Gunther_Mateos_Paradox_SICON_2011,Casas_Raymond_First_SICON_2006,Deckelnick_Andreas_Hinze_Three_SICON_2009,Kunisch_Vexler_Constrained_SICON_2007,May_Rannacher_Vexler_High_SICON_2013,Mateos_Neitzel_Constrained_COA_2016,APel_Mateos_Pfefferer_Arnd_DBC_MCRF_2018} and a mixed finite element method \cite{Gong_Yan_Mixed_SICON_2011}. A potential weakness of the CG method is that the control and state spaces are coupled: the control space is the trace of the state space. A mixed method allows the control and state spaces to be decoupled, which provides greater flexibility compared to the CG method; however,  the degrees of freedom are larger than the CG scheme. It is worth mentioning that Apel et al.\ in \cite{APel_Mateos_Pfefferer_Arnd_DBC_MCRF_2018} is the first work to obtain a superlinear convergence rate for the control on convex polygonal domains if one uses a superconvergence mesh.

Recently, researchers have investigated discontinuous Galerkin (DG) methods for Dirichlet boundary control problems. We used a hybridizable discontinuous Galerkin (HDG) method for the Poisson equation in \cite{HuShenSinglerZhangZheng_HDG_Dirichlet_control1}, and obtained a superlinear convergence rate for the control without using a special mesh or a higher order element. More recently, convection diffusion Dirichlet boundary control problems have gained more and more attention. Benner et al.\ in \cite{PeterBenner} used a local discontinuous Galerkin (LDG) method to obtain a sublinear convergence rate for the control.  We considered an HDG method and proved optimal  superlinear convergence rate for the control in \cite{HuMateosSinglerZhangZhang1}  if the regularity of the solution is high, i.e., $y \in H^{1+s}(\Omega)$ with $s\ge 1/2$. To overcome the difficulty for the low regularity case ($0 \le s<1/2$), we utilized a special projection operator  to get an optimal  superlinear convergence rate in \cite{HuMateosSinglerZhangZhang2}; the numerical analysis was more complicated than in \cite{HuMateosSinglerZhangZhang1}. Furthermore, in contrast to  \cite{HuShenSinglerZhangZheng_HDG_Dirichlet_control1}, we obtained an optimal superliner convergence  rate  for the control by using a discontinuous higher order (quadratic) element. 

Although the degrees of freedom of the HDG method are significantly reduced compared to standard mixed methods,  DG methods and LDG methods, they are still larger than the degrees of freedom of the CG method. In this work, we use  embedded DG (EDG) and interior EDG (IEDG) methods to approximate the solution of the Dirichlet boundary control problem. The EDG and IEDG methods are obtained from the HDG methods, and the global systems both use the same continuous elements; this reduces the number of degrees of freedom considerably. To approximate the control, we use continuous element in the  EDG method, and discontinuous elements in the IEDG method. Although the degrees of freedom of IEDG is slightly larger than the EDG method, the IEDG method  provides greater flexibility for  boundary control problems: we can use different finite element spaces for the control and the state. One possible benefit of the greater flexibility of the IEDG method is that discontinuous elements for the control may be better for more complicated problems (such as convection dominated problems) with sharp changes in the solution. For more details about the  EDG and IEDG methods; see \Cref{subsec:EmbeddedDGFormulations}.

Cockburn et al.\ in \cite{Cockburn_Guzman_Soon_EDG_SINUM_2009}  gave a rigorous error analysis of one EDG method for the Poisson equation. Recently, Zhang et al.\ in \cite{zhang2017optimal} proposed a new optimal EDG method for the Poisson equation. Moreover, we used these two EDG methods to approximate the solutions of  distributed control problems for the Poisson equation \cite{ZhengZhangSinglerEDG1} and a convection diffusion equation \cite{ZhengZhangSinglerEDG2}, respectively. However, the techniques in the previous EDG works are not applicable for the Dirichlet boundary control problem since the  regularity of the solution may be low. Instead of introducing a special projection as in \cite{HuMateosSinglerZhangZhang2}, we use an improved trace inequality from  \cite{Cai_He_Zhang_Adaptive_SINUM_2017} to deal with the low regularity solution. We improve the existing EDG error analysis by dealing with the case of low regularity solutions; also this is  the first work to give a rigorous error analysis for the IEDG method.  Moreover, in \Cref{sec:analysis} we prove the same convergence rates for the EDG and IEDG methods that we obtained for HDG methods in \cite{HuMateosSinglerZhangZhang1,HuMateosSinglerZhangZhang2}.  We present numerical results in \Cref{sec:numerics} for both diffusion dominated and convection dominated problems. Our experiments indicate that both methods work well for both cases; in addition, the IEDG method does a good job at computing sharp changes in the optimal control in the difficult convection dominated case.

\section{Background: Regularity and EDG Formulation }
\label{sec:Regularity and HDG Formulation}

Throughout,  the standard notation $H^{m}(\Omega)$ is used  for Sobolev spaces on $ \Omega $, and  we let $\|\cdot\|_{m,\Omega}$ and $|\cdot|_{m,\Omega}$ denote the Sobolev norm and seminorm.  We omit the index $m$ when $m = 0$ and the domain $\Omega$ if it will not cause confusion.  Also, set $H_0^1(\Omega)=\{v\in H^1(\Omega):v=0 \;\mbox{on}\; \partial \Omega\}$ and $ H(\text{div},\Omega) = \{\bm{v}\in [L^2(\Omega)]^d, \nabla\cdot \bm{v}\in L^2(\Omega)\} $.  We denote $(\cdot,\cdot)_{K}$ and $\langle\cdot,\cdot \rangle_{E}$ the standard  $L^2$-inner products on the domains $K\subset \mathbb R^d$ and $E\subset \mathbb R^{d-1}$.

Let $\omega  \  (1<\pi/\omega\leq 3)$ denote the largest interior angle of the domain $\Omega$, i.e., $\Omega$ is a \emph{convex} polygonal domain. Moreover, we assume $ \bm \beta $ satisfies
\begin{equation}\label{eqn:beta_assumptions1}
\bm \beta \in [L^\infty(\Omega)]^d,  \quad  \nabla\cdot\bm{\beta}\in L^\infty(\Omega),  \quad  \nabla \cdot \bm \beta \leq 0, \quad \nabla \nabla \cdot \bm{\beta}\in[L^2(\Omega)]^d. 
\end{equation}
%

%
The mixed weak form of the formal optimality system \eqref{eq_adeq_a}-\eqref{eq_adeq_e} is
\begin{subequations}\label{mixed}
	\begin{align}
	\varepsilon^{-1}\innOm{ {\bm q}}{\bm r} -\innOm{ y}{\nabla\cdot \bm r} + \innGa{ u}{\bm r\cdot \bm n} &= 0,\label{mixed_a}\\
	\innOm{\nabla\cdot( {\bm q}+\bm{\beta}  y)}{w} -\innOm{ y\nabla\cdot\bm{\beta}}{w}  &= \innOm{f}{w},\label{mixed_b}\\
	\varepsilon^{-1}\innOm{ {\bm p}}{\bm r}-\innOm{ z}{\nabla\cdot \bm r} &= 0,\label{mixed_c}\\%
	\innOm{\nabla\cdot( {\bm p}-\bm{\beta} z)}{w} &= \innOm{ y- {y_d}}{w},\label{mixed_d}\\
	\innGa{\gamma  u+ {\bm p}\cdot\bm n}{\mu} &=  0\label{mixed_e}
	\end{align}
	for all $(\bm r,w,\mu)\in H(\textup{div},\Omega)\times L^2(\Omega)\times L^2(\Gamma)$.
\end{subequations}
%

The following well-posedness and  regularity result is found in \cite{HuMateosSinglerZhangZhang1}.
\begin{theorem}\label{MT210}
	If $f=0$ and $y_d\in H^{t^*}(\Omega)$ for some $0\leq t^*<1$, then the optimal control problem \eqref{cost1}-\eqref{Ori_problem} has a unique solution $  u\in L^2(\Gamma)$ and $ u $ is uniquely determined by the optimality system \eqref{mixed_a}-\eqref{mixed_e}.  Moreover, for any $ s > 0 $ satisfying $s\leq  1/2 +t^*$ and $s<\min\{3/2,{\pi}/{\omega}-1/2\}$, we have $  u\in H^s (\Gamma)$ and 
	\begin{align*}
	(\bm q, {\bm p}, y,  z) &\in  [H^{s-\frac 12}(\Omega)]^d \cap H(\mathrm{div},\Omega)\times [H^{s+\frac 12}(\Omega)]^d\times H^{s+\frac 12}(\Omega)\times H^{s+\frac 32}(\Omega).
	\end{align*}
\end{theorem}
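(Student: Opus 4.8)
The plan is to establish the result in two stages: first prove well-posedness and uniqueness of the optimal control, then bootstrap elliptic regularity to obtain the stated Sobolev smoothness for $(\bm q, \bm p, y, z)$ and $u$.

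\textbf{Well-posedness.} Since $J$ is the sum of a squared $L^2(\Omega)$ tracking term and a coercive Tikhonov term $\frac{\gamma}{2}\|u\|^2_{L^2(\Gamma)}$ with $\gamma>0$, the cost is strictly convex and coercive in $u$ over $L^2(\Gamma)$. The control-to-state map $u \mapsto y$ is affine: because $f=0$, the state equation \eqref{eq_adeq_a}-\eqref{eq_adeq_b} with inhomogeneous Dirichlet data $u$ is linear in $u$. The non-positivity assumption $\nabla\cdot\bm\beta\le 0$ in \eqref{eqn:beta_assumptions1} gives coercivity of the convection-diffusion bilinear form (after integration by parts the convection term contributes a nonnegative $-\frac12(\nabla\cdot\bm\beta\, y, y)$ term), so the state solve is well-posed by Lax--Milgram and the reduced functional $j(u)=J(u)$ is strictly convex, continuous, and coercive on the Hilbert space $L^2(\Gamma)$. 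The direct method of the calculus of variations then yields a unique minimizer. To show $u$ is \emph{uniquely determined} by the optimality system \eqref{mixed_a}-\eqref{mixed_e}, I would verify that the first-order conditions are both necessary and sufficient: necessity follows from differentiating $j$ and identifying the adjoint state $z$ solving \eqref{eq_adeq_c}-\eqref{eq_adeq_d}, with the gradient equation \eqref{eq_adeq_e} (equivalently \eqref{mixed_e}) characterizing the optimum; sufficiency follows from strict convexity. I would note that all of this is essentially quoted from \cite{HuMateosSinglerZhangZhang1} and only the regularity statement requires genuine work here.

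\textbf{Regularity bootstrap.} This is the main content and the hardest part. The strategy is to alternate between the adjoint and state equations, each time gaining regularity, and to track carefully how the corner angle $\omega$ and the data exponent $t^*$ limit the achievable smoothness. I would start from $u\in L^2(\Gamma)$ and $y_d\in H^{t^*}(\Omega)$. The key structural observation is that the coupling runs $z \to u \to y \to z$: the Robin-type condition \eqref{eq_adeq_e} reads $u = \gamma^{-1}\varepsilon\,\partial_{\bm n} z$, so the control inherits its regularity from the normal derivative (one order less than the trace) of the adjoint state $z$; the state $y$ then inherits regularity from its Dirichlet data $u$; and $z$ in turn inherits from the right-hand side $y-y_d$. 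For convex polygonal domains, the elliptic shift estimates on $H^{1+\sigma}$ hold up to the threshold $\sigma < \pi/\omega$ dictated by the leading corner singularity; this is exactly where the constraint $s<\pi/\omega - 1/2$ enters. Concretely, I expect the chain to give: $u\in H^s(\Gamma)$ with $s$ limited by half-integer trace losses; the Dirichlet problem for $y$ then yields $y\in H^{s+1/2}(\Omega)$, and via the mixed form $\bm q = -\varepsilon\nabla y \in [H^{s-1/2}(\Omega)]^d$, with $\bm q \in H(\mathrm{div},\Omega)$ because $\nabla\cdot\bm q$ is controlled by the state equation \eqref{mixed_b}; the adjoint problem with data $y-y_d\in H^{\min\{s+1/2,\,t^*\}}$ then yields the two-order gain $z\in H^{s+3/2}(\Omega)$ and $\bm p\in [H^{s+1/2}(\Omega)]^d$. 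Closing the loop self-consistently forces $s\le 1/2+t^*$ (the data-limited bound from $y_d$) together with the corner bound $s<\min\{3/2,\pi/\omega-1/2\}$.

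\textbf{Anticipated obstacle.} The delicate point is the fractional/trace accounting at the corners: the regularity of $u$ is governed by the normal derivative of $z$ restricted to $\Gamma$, and near the reentrant-limiting angle $\omega$ the corner singularity expansion $r^{\pi/\omega}$ of $z$ must be handled so that $\partial_{\bm n} z$ lands in exactly $H^s(\Gamma)$ and no better. I would use the weighted/Kondrat'ev or the convex-domain $H^2$ regularity theory to justify the shift estimates rigorously, taking care that the half-order loss in passing from interior $H^{s+3/2}(\Omega)$ regularity of $z$ to boundary regularity of $\partial_{\bm n}z\in H^{s}(\Gamma)$ is sharp. Since the theorem is attributed to \cite{HuMateosSinglerZhangZhang1}, in the write-up I would either cite that reference for the detailed corner analysis or reproduce only the bootstrap bookkeeping, emphasizing how the three constraints on $s$ arise and checking the self-consistency of the exponents around the loop.
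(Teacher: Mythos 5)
This theorem is not proved in the paper at all: the text introduces it with ``The following well-posedness and regularity result is found in \cite{HuMateosSinglerZhangZhang1}'' and simply imports it, so there is no in-paper argument to compare your proposal against. Your outline does capture the correct architecture of how such a result is established in that reference (strict convexity plus coercivity for existence and uniqueness; a bootstrap around the loop $z\to u\to y\to z$ with the corner exponent $\pi/\omega$ and the data exponent $t^*$ producing exactly the constraints $s<\pi/\omega-1/2$ and $s\le 1/2+t^*$, and the half-order gaps between $u$, $y$, $\bm q=-\varepsilon\nabla y$, $z$, $\bm p=-\varepsilon\nabla z$ matching the stated Sobolev indices).

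There is, however, one genuine misstep and one structural caveat. The misstep: you assert that the state solve with boundary datum $u\in L^2(\Gamma)$ ``is well-posed by Lax--Milgram.'' It is not---an $H^1(\Omega)$ variational formulation requires Dirichlet data in $H^{1/2}(\Gamma)$, and for $u\in L^2(\Gamma)$ the state $y$ is only defined by transposition (the very weak solution), i.e.\ $y\in L^2(\Omega)$ is characterized by testing against solutions of the adjoint problem $-\varepsilon\Delta\phi-\nabla\cdot(\bm\beta\phi)=g$, $\phi|_\Gamma=0$, and it is \emph{that} adjoint problem whose coercivity uses $\nabla\cdot\bm\beta\le 0$ and Lax--Milgram. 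This is not cosmetic: the continuity of the control-to-state map $L^2(\Gamma)\to L^2(\Omega)$, hence the continuity and coercivity of the reduced functional, and the very meaning of the mixed optimality system \eqref{mixed_a}--\eqref{mixed_e} with $\innGa{u}{\bm r\cdot\bm n}$ as a boundary pairing, all rest on the transposition framework. The structural caveat: your regularity section is a plan rather than a proof---the corner-singularity accounting (the $r^{\pi/\omega}$ expansion, the sharpness of the half-order loss in $\partial_{\bm n}z$, and the origin of the cap $s<3/2$ from the edgewise definition of $H^s(\Gamma)$ on a polygon) is exactly the hard content, and you explicitly defer it to \cite{HuMateosSinglerZhangZhang1}. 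Since the present paper also defers the entire theorem to that reference, that deferral is acceptable here, but as a standalone proof your write-up would not be complete without it.
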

We note that the case of  $ f \neq 0 $ can be handled by the technique in \cite[pg.\ 3623]{Apel_Mateos_Pfefferer_Regularity_SICON_2015}.
\Cref{MT210} implies that if $ y_d \in H^{t^*}(\Omega) $ for some $ t^* \in (1/2,1) $, and $ \pi/3 <\omega < 2\pi/3 $, then $ u \in H^{r_u}(\Gamma) $ for some $ r_u \in (1, 3/2) $, we called this  the high regularity  case in \cite{HuMateosSinglerZhangZhang1}. In this scenario, $\bm{q} \in H^{r_{\bm q}}(\Omega)$ with $ r_{\bm q} > 1/2 $, which guarantees $ \bm q $ has a $L^2$ boundary trace.  We used this property to give a  convergence analysis of HDG methods in \cite{HuShenSinglerZhangZheng_HDG_Dirichlet_control1,HuMateosSinglerZhangZhang1}.

However, if $ t^* \in [0,1/2) $ or $2\pi/3 \leq \omega<\pi $, then we are in the low regularity case, i.e., $ u \in H^{r_u}(\Gamma) $ for some $ r_u \in [1/2, 1) $, and $ \|\bm q\|_{\partial\mathcal T_h} $ is not well-defined. The numerical analysis is more difficult in this case;  see \cite{HuMateosSinglerZhangZhang2} for an HDG method in the  low regularity case.

\subsection{A Class of Embedded DG Formulations}
\label{subsec:EmbeddedDGFormulations}
To better describe the  class of Embedded DG (EDG) methods, we first give some notation.

Let $\mathcal{T}_h$ be a conforming, quasi-uniform triangulation of $\Omega$. We denote by $\partial \mathcal{T}_h$ the set $\{\partial K: K\in \mathcal{T}_h\}$. For $K\in \mathcal T_h$, let $e = \partial K \cap \Gamma$ denote the boundary face of $ K $ if the $d-1$ Lebesgue measure of $e$ is non-zero. For two elements $K_1, K_2\in \mathcal T_h$, let $e = \partial K_1 \cap \partial K_2$ denote the interior face between $K_1$ and $K_2$ if the $d-1$ Lebesgue measure of $e$ is non-zero. Let $\mathcal E_h^o$ and $\mathcal E_h^{\partial}$ denote the sets of interior and boundary faces, respectively. We denote by $\mathcal E_h$ the union of  $\mathcal E_h^o$ and $\mathcal E_h^{\partial}$. Finally, we  introduce
\begin{align*}
(w,v)_{\mathcal{T}_h} = \sum_{K\in\mathcal{T}_h} (w,v)_K,   \quad\quad\quad\quad\left\langle \zeta,\rho\right\rangle_{\partial\mathcal{T}_h} = \sum_{K\in\mathcal{T}_h} \left\langle \zeta,\rho\right\rangle_{\partial K}.
\end{align*}

HDG methods were proposed by Cockburn et al.\ in \cite{Cockburn_Gopalakrishnan_Lazarov_Unify_SINUM_2009} as an improvement of traditional discontinuous Galerkin (DG) methods and have many applications; see, e.g., \cite{Qiu_Shen_Shi_elasticity_MathComp_2018,Cockburn_Gopalakrishnan_Nguyen_Peraire_Sayas_Stokes_MathComp_2011,Chabaud_Cockburn_Heat_MathComp_2012,Celiker_Cockburn_Shi_MathComp_2012, Cockburn_Johnny_Says_SINUM_2012, Cockburn_Shi_Stokes_MathComp_2013,Cockburn_Quenneville_Wave_MathComp_2014}. HDG methods are based on mixed formulations and introduce a new variable to approximate the trace of the scalar variable  along the element boundary.  To approximate the flux variable and solution, we use the discontinuous finite element spaces $\bm V_h$ and $W_h$:
\begin{align*}
\bm{V}_h  &:= \{\bm{v}\in [L^2(\Omega)]^d: \bm{v}|_{K}\in [\mathcal{P}^k(K)]^d, \forall K\in \mathcal{T}_h\},\\
{W}_h  &:= \{{w}\in L^2(\Omega): {w}|_{K}\in \mathcal{P}^{\ell}(K), \forall K\in \mathcal{T}_h\},
\end{align*}
where  $\mathcal{P}^k(K)$ denotes the set of polynomials of degree at most $k$ on a domain $K$.  HDG methods use the discontinuous finite element spaces to express  the approximate flux and solution in an element-by-element fashion in terms of numerical traces of the scalar variable. Then the globally coupled system only involves the numerical trace. The high number of globally coupled degrees of freedom is  significantly reduced compared to  other DG methods and standard mixed methods.

For the HDG methods, we use the following discontinuous finite element space to approximate the numerical trace:
\begin{align*}
M_h^{\textup{HDG}}:= \{{\mu}\in L^2(\mathcal E_h): {\mu}|_{e}\in \mathcal{P}^{m}(e), \forall e\in \mathcal E_h \}.
\end{align*}
Note that  $M_h^{\textup{HDG}}$ consists of functions which are discontinuous at the border of the faces.  Embedded discontinuous Galerkin (EDG) methods,  which were originally proposed in \cite{Guzey_Cockburn_Stolarski_EDG_IJNME_2007}, are obtained from HDG methods by replacing the discontinuous finite element space for the numerical traces with a continuous space, i.e.,
\begin{align*}
M_h^{\textup{EDG}}:= \{{\mu}\in\mathcal{C}^0 (\mathcal E_h): {\mu}|_{e}\in \mathcal{P}^{m}(e), \forall e\in \mathcal E_h \}.
\end{align*}
Hence, the number of degrees of freedom for the EDG method are much smaller than the HDG method, and also the same with the  CG method (after static condensation). The interior embedded discontinuous Galerkin (IEDG) method  was proposed and investigated for convection dominated flow problems in \cite{Nguyen_Peraire_Cockburn_JCP_2015}. The IEDG method is obtained by a simple change to  the space of the numerical trace from the HDG and EDG methods; specifically, 
\begin{align*}
M_h^{\textup{IEDG}}&:= \{{\mu}\in L^2(\mathcal E_h): {\mu}|_{e}\in \mathcal{P}^{m}(e), \forall e\in \mathcal E_h, \; \textup{and} \; \mu|_{\mathcal E_h^o} \in \mathcal{C}^0 (\mathcal E_h^o) \}.
\end{align*}
The functions in $M_h^{\textup{IEDG}}$ are only continuous on the union of interior edges and are  discontinuous on the union of the boundary edges. This simple change has many benefits even for pure PDE simulations; see \cite{Nguyen_Peraire_Cockburn_JCP_2015} for details. Compared to the EDG methods, the IEDG methods have a great potential for boundary control problems since they allow us to choose different spaces for the state and the control, as discussed in the introduction.

In this paper, we perform  a numerical analysis for both EDG and IEDG methods for the convection diffusion Dirichlet boundary control problem. To unify the analysis, we omit the superscripts  EDG  and IEDG on the space $M_h^{\textup{EDG}}$  and $M_h^{\textup{IEDG}}$, respectively.  We choose the following finite element spaces:
\begin{align*}
\bm{V}_h  &:= \{\bm{v}\in [L^2(\Omega)]^d: \bm{v}|_{K}\in [\mathcal{P}^k(K)]^d, \forall K\in \mathcal{T}_h\},\\
{W}_h  &:= \{{w}\in L^2(\Omega): {w}|_{K}\in \mathcal{P}^{k+1}(K), \forall K\in \mathcal{T}_h\},\\
M_h&:= \{{\mu}\in L^2(\mathcal E_h): {\mu}|_{e}\in \mathcal{P}^{k+1}(e), \forall e\in \mathcal E_h, \; \textup{and} \; \mu|_{\mathcal E_h^o} \in \mathcal{C}^0 (\mathcal E_h^o) \}.
\end{align*}

Let $M_h(o)$ and $M_h(\partial)$ be the spaces defined similarly to $M_h$ with $\mathcal E_h$ replaced by the set  of interior edges $\mathcal E_h^o$ and the set of boundary edges $\mathcal E_h^\partial$, respectively. The functions in $M_h(o)$ are continuous for both the EDG method and IEDG method,  while the functions in $M_h(\partial)$ are continuous across the boundary edges for the  EDG method and  discontinuous for the IEDG method. In addition, for any function $w\in W_h$ and $\bm r \in \bm V_h$, we use  $ \nabla w$  and $\nabla\cdot \bm r$ to denote the piecewise gradient and divergence on each element $K\in \mathcal T_h$, respectively.

Below, we consider the EDG and IEDG methods simultaneously; the choice of $M_h(\partial)$ determines the method as indicated above. The EDG  (or IEDG) method  seeks approximate fluxes ${\bm{q}}_h,{\bm{p}}_h \in \bm{V}_h $, states $ y_h, z_h \in W_h $, interior element boundary traces $ \widehat{y}_h^o,\widehat{z}_h^o \in {M}_h(o) $, and  control $ u_h \in {M}_h(\partial)$ satisfying
\begin{subequations}\label{HDG_discrete2}
	\begin{align}
	\varepsilon^{-1}(\bm q_h,\bm r_1)_{\mathcal T_h}-( y_h,\nabla\cdot\bm r_1)_{\mathcal T_h}+\langle \widehat y_h^o,\bm r_1\cdot\bm n\rangle_{\partial\mathcal T_h\backslash \mathcal E_h^\partial} + \langle  u_h,\bm r_1\cdot\bm n\rangle_{\mathcal E_h^\partial}&=0, \label{HDG_discrete2_a}\\
	-(\bm q_h+\bm \beta y_h,  \nabla w_1)_{\mathcal T_h} - (y_h \nabla\cdot\bm \beta, w_1)_{\mathcal T_h}+\langle\widehat {\bm q}_h\cdot\bm n,w_1\rangle_{\partial\mathcal T_h}  \quad  \nonumber \\ 
	+\langle \bm \beta\cdot\bm n\widehat y_h^o,w_1\rangle_{\partial\mathcal T_h\backslash\mathcal E_h^\partial} + \langle \bm \beta\cdot\bm n  u_h,w_1\rangle_{\mathcal E_h^\partial}  &=  ( f, w_1)_{\mathcal T_h},   \label{HDG_discrete2_b}
	%
	\end{align}
	for all $(\bm{r}_1, w_1)\in \bm{V}_h\times W_h$,
	\begin{align}
	\varepsilon^{-1}(\bm p_h,\bm r_2)_{\mathcal T_h}-(z_h,\nabla\cdot\bm r_2)_{\mathcal T_h}+\langle \widehat z_h^o,\bm r_2\cdot\bm n\rangle_{\partial\mathcal T_h\backslash\mathcal E_h^\partial}&=0,\label{HDG_discrete2_c}\\
	-(\bm p_h-\bm \beta z_h, \nabla w_2)_{\mathcal T_h}+\langle\widehat{\bm p}_h\cdot\bm n,w_2\rangle_{\partial\mathcal T_h}  \quad  \nonumber\\
	-\langle\bm \beta\cdot\bm n\widehat z_h^o,w_2\rangle_{\partial\mathcal T_h\backslash \mathcal E_h^\partial} - (y_h, w)_{\mathcal T_h}&=- (y_d, w_2)_{\mathcal T_h},  \label{HDG_discrete2_d}
	\end{align}
	for all $(\bm{r}_2, w_2)\in \bm{V}_h\times W_h$,
	\begin{align}
	\langle\widehat {\bm q}_h\cdot\bm n,\mu_1\rangle_{\partial\mathcal T_h\backslash\varepsilon^{\partial}_h}&=0\label{HDG_discrete2_e},\\
	\langle\widehat{\bm p}_h\cdot\bm n,\mu_2\rangle_{\partial\mathcal T_h\backslash\varepsilon^{\partial}_h}&=0,\label{HDG_discrete2_f}
	\end{align}
	for all $\mu_1,\mu_2\in {M}_h(o)$, and the optimality condition 
	\begin{align}
	\gamma \langle u_h, \mu_3 \rangle_{{\mathcal E_h^{\partial}}}+\langle \widehat{\bm{p}}_h\cdot \bm{n}, \mu_3\rangle_{{{\mathcal E_h^{\partial}}}} &=0, \label{HDG_discrete2_g}
	\end{align}
	for all $ \mu_3 \in M_h(\partial)$.   The numerical traces on $\partial\mathcal{T}_h$ are defined as 
	\begin{align}
	\widehat{\bm{q}}_h\cdot \bm n &=\bm q_h\cdot\bm n+(h^{-1} + \tau_1) (y_h-\widehat y_h^o)   \qquad \mbox{on} \; \partial \mathcal{T}_h\backslash\mathcal E_h^\partial, \label{EDG_discrete2_h}\\
	\widehat{\bm{q}}_h\cdot \bm n &=\bm q_h\cdot\bm n+(h^{-1}+\tau_1) (y_h- u_h) \qquad   \mbox{on}\;  \mathcal E_h^\partial, \label{HDG_discrete2_i}\\
	\widehat{\bm{p}}_h\cdot \bm n &=\bm p_h\cdot\bm n+(h^{-1}+\tau_2)(z_h-\widehat z_h^o) \qquad  \; \mbox{on} \; \partial \mathcal{T}_h\backslash\mathcal E_h^\partial,\label{EDG_discrete2_j}\\
	\widehat{\bm{p}}_h\cdot \bm n &=\bm p_h\cdot\bm n+ (h^{-1} + \tau_2)z_h \qquad\qquad \quad  \  \mbox{on}\;  \mathcal E_h^\partial,\label{EDG_discrete2_k}
	\end{align}
\end{subequations}
where $\tau_1$ and $\tau_2$ are positive stabilization functions defined on $\partial \mathcal T_h$ that  satisfy
\begin{align}\label{tau1tau2}
\tau_2 = \tau_1 - \bm \beta \cdot \bm n. 
\end{align}
The condition \eqref{tau1tau2} for the stabilization functions $\tau_1$ and $\tau_2$ has now been used in a number of works; see, e.g., \cite{HuMateosSinglerZhangZhang1,HuMateosSinglerZhangZhang2} for convection diffusion Dirichlet boundary control problems and \cite{ChenHuShenSinglerZhangZheng_HDG_Convection_Dirtributed_Control_JCAM_2018,HuShenSinglerZhangZheng_HDG_Dirichlet_control3,ZhengZhangSinglerEDG2} for convection diffusion distributed optimal control problems. This condition causes the optimize-then-discretize and discretize-then-optimize EDG/HDG approaches to the control problem to produce equivalent results; see \cite{ZhengZhangSinglerEDG2} for details concerning an EDG method for a distributed convection diffusion optimal control problem. Our implementation of the EDG and IEDG methods is similar to the HDG implementation for a Poisson Dirichlet boundary control problem described in our earlier work \cite{HuShenSinglerZhangZheng_HDG_Dirichlet_control1}.

\section{Error Analysis}
\label{sec:analysis}
Next, we provide a convergence analysis of the above EDG and IEDG methods for the convection diffusion Dirichlet boundary control problem in both high regularity and low regularity cases. For the high regularity case, tools from the  analysis technique in \cite{ZhengZhangSinglerEDG2} for a convection diffusion distributed control problem can be modified to apply to the Dirichlet boundary control problem. For the low regularity case, we introduced a speical projection operator in our earlier HDG work \cite{HuMateosSinglerZhangZhang2} to avoid the quantity $\|\bm q\cdot\bm n\|_{\partial \mathcal T_h}$ in the analysis; however, this complicated the analysis. In this work, we use an improved inverse inequality from \cite{Cai_He_Zhang_Adaptive_SINUM_2017}, and simplify the error analysis for the low regularity case.  It is worth mentioning that part of our analysis (step 1 to step 3 in \Cref{ProofofMainResult}) improves the existing EDG error analysis by dealing with the case of low regularity solutions. In this work, we only perform an error analysis for the diffusion dominated case; i.e., in this section, we assume $\varepsilon=\mathcal O(1)$. The generic constant $C$  may depend on the data of the problem but is independent of $h$ and may change from line to line.

\subsection{Assumptions and Main Result}
We assume the solution of the optimality system \eqref{mixed_a}-\eqref{mixed_e} has the following regularity properties:
\begin{subequations}\label{eqn:regularity2}
	\begin{gather}
	\bm q \in [ H^{r_{\bm q}}(\Omega) ]^d \cap H(\mathrm{div},\Omega),  \  \bm p \in [H^{r_{\bm p}}(\Omega)]^d, \ y \in H^{r_y}(\Omega),   \    z \in H^{r_z}(\Omega),  \\
	r_{\bm q} >  0,  \quad  r_{\bm p} > 1, \quad r_y > 1,  \quad  r_z > 2.\label{eqn:s_rates_ineq}
	\end{gather}
\end{subequations}
In the 2D case, \Cref{MT210} guarantees this regularity condition is satisfied.

We now state our main convergence result.
\begin{theorem}\label{main_res}
	Let
	\begin{equation}\label{eqn:s_rates}
	\begin{split}
	s_{\bm q} &= \min\{r_{\bm q}, 1\}, \qquad  s_{y} = \min\{r_{y}, k+2\}, \\
	s_{\bm p} &= \min\{r_{\bm p}, k+1\},  \qquad s_{z} = \min\{r_{z}, k+2\}.
	\end{split}
	\end{equation}
	we have 
	\begin{align*}
	\norm{u-u_h}_{\mathcal E_h^\partial}&\le C (h^{s_{\bm q}+\frac 1 2}\norm{\bm q}_{s_{\bm q}} + h^{s_{y}-\frac 12 }\norm{y}_{s_{y}} + h^{s_{\bm p}-\frac 1 2}\norm{\bm p}_{s_{\bm p}} +  h^{s_{z}-\frac 3 2}\norm{z}_{s_{z}}),\\
	\norm {y-y_h}_{\mathcal T_h} &\le C (h^{s_{\bm q}+\frac 1 2}\norm{\bm q}_{s_{\bm q}} + h^{s_{y}-\frac 12 }\norm{y}_{s_{y}} + h^{s_{\bm p}-\frac 1 2}\norm{\bm p}_{s_{\bm p}} +  h^{s_{z}-\frac 3 2}\norm{z}_{s_{z}}),\\
	\norm {\bm p - \bm p_h}_{\mathcal T_h}  &\le C (h^{s_{\bm q}+\frac 1 2}\norm{\bm q}_{s_{\bm q}} + h^{s_{y}-\frac 12 }\norm{y}_{s_{y}} + h^{s_{\bm p}-\frac 1 2}\norm{\bm p}_{s_{\bm p}} +  h^{s_{z}-\frac 3 2}\norm{z}_{s_{z}}),\\
	\norm {z - z_h}_{\mathcal T_h} & \le C  (h^{s_{\bm q}+\frac 1 2}\norm{\bm q}_{s_{\bm q}} + h^{s_{y}-\frac 12 }\norm{y}_{s_{y}} + h^{s_{\bm p}-\frac 1 2}\norm{\bm p}_{s_{\bm p}} +  h^{s_{z}-\frac 3 2}\norm{z}_{s_{z}}).
	\end{align*}
	If $ k \geq 1 $, then
	\begin{align*}
	\norm {\bm q - \bm q_h}_{\mathcal T_h} \le C (h^{s_{\bm q}}\norm{\bm q}_{s_{\bm q}} + h^{s_{y}-1 }\norm{y}_{s_{y}} + h^{s_{\bm p}-1}\norm{\bm p}_{s_{\bm p}} +  h^{s_{z}-2}\norm{z}_{s_{z}}).
	\end{align*}
\end{theorem}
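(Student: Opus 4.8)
The plan is to follow the standard energy-plus-duality framework for HDG/EDG error analysis of optimal control problems, with two nonstandard ingredients to cope with the very weak flux regularity $r_{\bm q} > 0$. First I would introduce the EDG projection $\bm\Pi_h = (\bm\Pi_V, \Pi_W)$ onto $\bm V_h \times W_h$ of Cockburn--Gopalakrishnan--Sayas type, tailored to the numerical-trace definitions \eqref{EDG_discrete2_h}--\eqref{EDG_discrete2_k}, together with the $L^2$-projection $P_M$ onto $M_h(o)$. I would record its approximation properties, namely $\norm{\bm q - \bm\Pi_V \bm q}_{\mathcal T_h} \lesssim h^{s_{\bm q}}\norm{\bm q}_{s_{\bm q}}$, $\norm{y - \Pi_W y}_{\mathcal T_h} \lesssim h^{s_y}\norm{y}_{s_y}$, and the analogous bounds for $(\bm p, z)$, and then split every error in the familiar way, e.g. $\bm q - \bm q_h = (\bm q - \bm\Pi_V \bm q) + \bm\varepsilon^{\bm q}$ with $\bm\varepsilon^{\bm q} := \bm\Pi_V \bm q - \bm q_h$. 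This reduces the task to estimating the projected errors $\bm\varepsilon^{\bm q}, \varepsilon^y, \bm\varepsilon^{\bm p}, \varepsilon^z$ together with the trace errors, since the projection parts are already controlled by the approximation estimates and the definitions \eqref{eqn:s_rates} of $s_{\bm q}, s_y, s_{\bm p}, s_z$.

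Next I would derive the error equations by inserting the exact solution into the scheme \eqref{HDG_discrete2_a}--\eqref{EDG_discrete2_k} and subtracting; consistency of the EDG method gives a Galerkin-orthogonality system for the projected errors, in which the defining face-orthogonality of $\bm\Pi_h$ cancels the leading boundary contributions in the flux error equations. The heart of the argument is an energy identity obtained by testing the primal error equations with the adjoint errors and the adjoint error equations with the primal errors. Here the compatibility condition $\tau_2 = \tau_1 - \bm\beta\cdot\bm n$ from \eqref{tau1tau2} is exactly what makes the convective contributions from the two systems cancel, leaving a sign-definite combination of the stabilization terms $\langle (h^{-1}+\tau_i)(\cdot),(\cdot)\rangle_{\partial\mathcal T_h}$. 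Combining this identity with the discrete optimality condition \eqref{HDG_discrete2_g}, which ties $u_h$ to $\widehat{\bm p}_h\cdot\bm n$ on $\mathcal E_h^\partial$, yields a bound for $\norm{u-u_h}_{\mathcal E_h^\partial}$ and the state errors purely in terms of the projection errors; the separate $L^2$ flux estimate for $\bm q - \bm q_h$ is read off directly from the energy bound, which accounts for the scaling $h^{s_{\bm q}}$, $h^{s_y-1}$, $h^{s_{\bm p}-1}$, $h^{s_z-2}$ and for the mild restriction $k\ge 1$ needed to close the inverse estimate at the flux/energy level.

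To upgrade these energy rates to the optimal fractional rates for the scalar-type quantities---the extra half-power seen in $h^{s_{\bm q}+1/2}$, $h^{s_y-1/2}$, $h^{s_{\bm p}-1/2}$, $h^{s_z-3/2}$---I would run an Aubin--Nitsche duality argument against an auxiliary adjoint convection--diffusion problem, whose elliptic regularity on the convex polygon (guaranteed by \Cref{MT210} and the angle condition $1 < \pi/\omega \le 3$) supplies the gained half-power of $h$ uniformly across all four terms. The step I expect to be the genuine obstacle is controlling the boundary contributions that formally involve $\bm q\cdot\bm n$ on $\partial\mathcal T_h$: when $r_{\bm q} < 1/2$ the trace norm $\norm{\bm q\cdot\bm n}_{\partial\mathcal T_h}$ is not even defined, so the usual trace and duality estimates break down. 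Rather than introducing the special projection of \cite{HuMateosSinglerZhangZhang2}, I would invoke the improved trace inequality of \cite{Cai_He_Zhang_Adaptive_SINUM_2017} to bound these face terms by the interior quantities $\norm{\bm q - \bm\Pi_V\bm q}_{\mathcal T_h}$ and $\norm{\nabla\cdot(\bm q - \bm\Pi_V\bm q)}_{\mathcal T_h}$, both available since $\bm q \in H(\mathrm{div},\Omega)$, thereby sidestepping the missing trace and simplifying the low-regularity analysis; this is precisely the improvement advertised as steps 1--3 of \Cref{ProofofMainResult}.
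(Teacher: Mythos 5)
Your overall architecture (projection plus error splitting, an energy identity exploiting $\tau_2=\tau_1-\bm\beta\cdot\bm n$, the discrete optimality condition to extract $\norm{u-u_h}_{\mathcal E_h^\partial}$, and the improved trace inequality of \cite{Cai_He_Zhang_Adaptive_SINUM_2017} for the low-regularity flux) matches the paper's strategy in outline, but two of your concrete choices would not survive the low-regularity case the theorem is required to cover. First, a Cockburn--Gopalakrishnan--Sayas-type projection ``tailored to the numerical-trace definitions'' is defined through face moments of the exact flux, $\langle \bm q\cdot\bm n,\mu\rangle_F$, and these are not meaningful when $r_{\bm q}<1/2$, which is exactly the regime $r_{\bm q}>0$ allows; moreover the spaces here are unequal-order ($[\mathcal P^k]^d\times\mathcal P^{k+1}$), so the standard CGS projection and its estimates do not apply anyway. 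The paper instead uses the plain $L^2$ projections --- notably the piecewise-constant projection $\bm\Pi_h^0$ for $\bm q$, which suffices since $s_{\bm q}=\min\{r_{\bm q},1\}$ --- and absorbs the resulting non-orthogonal face terms $\langle(\bm\Pi_h^0\bm q-\bm q)\cdot\bm n,\cdot\rangle$ precisely with \Cref{improved_trace_inequality}. Related to this, your $L^2$ projection $P_M$ onto $M_h(o)$ is a \emph{global} projection on the mesh skeleton (the space is continuous there for EDG/IEDG) and does not come with the local approximation bounds you would need; the paper uses the Scott--Zhang interpolant $\mathcal I_h^{k+1}$ restricted to faces for exactly this reason.

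Second, your claim that an Aubin--Nitsche argument ``supplies the gained half-power of $h$ uniformly across all four terms'' misreads the structure of the rates. The paper runs the duality argument only for $\varepsilon_h^y$ (Step 3); the adjoint pair $(\bm p,z)$ is estimated by an energy argument combined with a discrete Poincar\'e inequality (Step 5), and the final rates for $y$, $\bm p$ and $z$ are in fact suboptimal, as the paper itself remarks after \Cref{cor:main_result}. The exponents $h^{s_y-1/2}$, $h^{s_{\bm p}-1/2}$, $h^{s_z-3/2}$ come from trace estimates of projection errors and the negative powers $h^{-1/2}$, $h^{-1}$, $h^{-3/2}$ that appear in the control bound of \Cref{erroruh}, not from a uniform duality gain. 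Finally, you omit the auxiliary problem \eqref{HDG_inter_u} in which the exact control $u$ is inserted into the discrete scheme; this two-stage splitting (exact vs.\ auxiliary in Steps 1--5, auxiliary vs.\ discrete in Steps 6--7) is what decouples the PDE error equations from the optimality condition, and without it the ``Galerkin orthogonality'' you invoke does not close. These gaps would need to be repaired before the argument yields the stated rates.
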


Specializing to the 2D case gives the following result:
\begin{corollary}\label{cor:main_result}
	Suppose $ d = 2 $, $ f = 0 $ and $ y_d \in H^{t^*}(\Omega) $ for some $ t^* \in (0,1) $.  Let $ \pi/3\le \omega<\pi $ be the largest interior angle of $\Gamma$, and let $ r > 0 $ satisfy
	$$
	r \leq  r_d := \frac{1}{2} + t^* \in (1/2,3/2),  \quad  \mbox{and}  \quad  r < r_{\Omega} := \min\left\{ \frac{3}{2}, \frac{\pi}{\omega} - \frac{1}{2} \right\} \in (1/2, 3/2].
	$$
	If $ k = 1 $, then
	\begin{align*}
	\norm{u-u_h}_{\mathcal E_h^\partial}&\le C  h^{r} (\norm{\bm q}_{H^{r-1/2}} + \norm{y}_{H^{r+1/2}} + \norm{\bm p}_{H^{r+1/2}} +  \norm{z}_{H^{r+3/2}}),\\
	\norm{y-y_h}_{\mathcal T_h}&\le C  h^{r} (\norm{\bm q}_{H^{r-1/2}} + \norm{y}_{H^{r+1/2}} + \norm{\bm p}_{H^{r+1/2}} +  \norm{z}_{H^{r+3/2}}),\\
	\norm {\bm p - \bm p_h}_{\mathcal T_h}   &\le C  h^{r} (\norm{\bm q}_{H^{r-1/2}} + \norm{y}_{H^{r+1/2}} + \norm{\bm p}_{H^{r+1/2}} +  \norm{z}_{H^{r+3/2}}),\\
	\norm {z - z_h}_{\mathcal T_h}   & \le C  h^{r} (\norm{\bm q}_{H^{r-1/2}} + \norm{y}_{H^{r+1/2}} + \norm{\bm p}_{H^{r+1/2}} +  \norm{z}_{H^{r+3/2}}).
	\end{align*}
	If in addition $ r > 1/2 $, then
	$$
	\norm {\bm q - \bm q_h}_{\mathcal T_h}  \le C  h^{r-1/2} (\norm{\bm q}_{H^{r-1/2}} + \norm{y}_{H^{r+1/2}} + \norm{\bm p}_{H^{r+1/2}} +  \norm{z}_{H^{r+3/2}}).
	$$
	Furthermore, if $ k = 0 $ then
	\begin{align*}
	\norm{u-u_h}_{\mathcal E_h^\partial}&\le C h^{1/2} ( \norm{\bm q}_{H^{r-1/2}} + \norm{y}_{H^{r+1/2}} + \norm{\bm p}_{H^{1}} +  \norm{z}_{H^{2}}),\\
	\norm{y-y_h}_{\mathcal T_h}&\le C h^{1/2} ( \norm{\bm q}_{H^{r-1/2}} + \norm{y}_{H^{r+1/2}} + \norm{\bm p}_{H^{1}} +  \norm{z}_{H^{2}}),\\
	\norm {\bm p - \bm p_h}_{\mathcal T_h}   &\le C h^{1/2} ( \norm{\bm q}_{H^{r-1/2}} + \norm{y}_{H^{r+1/2}} + \norm{\bm p}_{H^{1}} +  \norm{z}_{H^{2}}),\\
	\norm {z - z_h}_{\mathcal T_h}   & \le C h^{1/2} ( \norm{\bm q}_{H^{r-1/2}} + \norm{y}_{H^{r+1/2}} + \norm{\bm p}_{H^{1}} +  \norm{z}_{H^{2}}).
	\end{align*}
\end{corollary}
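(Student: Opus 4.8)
The plan is to obtain \Cref{cor:main_result} as a direct specialization of \Cref{main_res}, fed with the explicit two-dimensional regularity supplied by \Cref{MT210}. First I would apply \Cref{MT210} with $s=r$: the corollary's hypotheses $r\le r_d=\tfrac12+t^*$ and $r<r_\Omega=\min\{\tfrac32,\tfrac{\pi}{\omega}-\tfrac12\}$ are exactly the admissibility conditions $s\le\tfrac12+t^*$ and $s<\min\{\tfrac32,\tfrac{\pi}{\omega}-\tfrac12\}$ of that theorem. This yields
\[
\bm q\in[H^{r-1/2}(\Omega)]^d\cap H(\mathrm{div},\Omega),\quad \bm p\in[H^{r+1/2}(\Omega)]^d,\quad y\in H^{r+1/2}(\Omega),\quad z\in H^{r+3/2}(\Omega),
\]
so the abstract indices of \eqref{eqn:regularity2} specialize to $r_{\bm q}=r-\tfrac12$, $r_{\bm p}=r_y=r+\tfrac12$, and $r_z=r+\tfrac32$.

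The verification of the standing inequalities \eqref{eqn:s_rates_ineq} is where the half-order threshold enters: with these indices the condition $r_{\bm q}>0$ is equivalent to $r>\tfrac12$ (and $r_{\bm p},r_y>1$, $r_z>2$ reduce to the same inequality), which is the extra hypothesis the corollary attaches to the sharp flux estimate, and at which the flux rate $h^{r-1/2}$ first becomes a genuine convergence rate. With \Cref{main_res} in force, the remaining work is to evaluate the truncated indices \eqref{eqn:s_rates} for each $k$ and collect powers of $h$. For $k=1$ the bound $r<\tfrac32$ leaves every cap inactive, since $r-\tfrac12<1$, $r+\tfrac12<2=k+1$, and $r+\tfrac32<3=k+2$; hence $s_{\bm q}=r-\tfrac12$, $s_y=s_{\bm p}=r+\tfrac12$, and $s_z=r+\tfrac32$.

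Substituting these into the four displayed exponents of \Cref{main_res}, I expect each to collapse to the single value $r$, because $s_{\bm q}+\tfrac12=s_y-\tfrac12=s_{\bm p}-\tfrac12=s_z-\tfrac32=r$; this gives the stated $O(h^r)$ bounds for $u$, $y$, $\bm p$, $z$. The separate flux estimate (available since $k=1\ge 1$) collapses similarly, as $s_{\bm q}=s_y-1=s_{\bm p}-1=s_z-2=r-\tfrac12$, yielding the $O(h^{r-1/2})$ bound. Rewriting $\norm{\bm q}_{s_{\bm q}}$ as $\norm{\bm q}_{H^{r-1/2}}$, and likewise the three remaining norms, then reproduces the $k=1$ inequalities verbatim.

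For $k=0$ the only change is that the caps $k+1=1$ and $k+2=2$ now bind for $\bm p$ and $z$: using $r\ge\tfrac12$ gives $s_{\bm p}=\min\{r+\tfrac12,1\}=1$ and $s_z=\min\{r+\tfrac32,2\}=2$, so the associated exponents are $s_{\bm p}-\tfrac12=\tfrac12$ and $s_z-\tfrac32=\tfrac12$, while the still-uncapped $\bm q$ and $y$ terms contribute $h^r$. Since $r\ge\tfrac12$ forces $h^{r}\le h^{1/2}$ for $h\le 1$, I would absorb the two faster terms into a single $h^{1/2}$ prefactor, producing the $k=0$ estimates with norms $\norm{\bm p}_{H^1}$ and $\norm{z}_{H^2}$. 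I expect no analytical obstacle: the argument is pure bookkeeping once \Cref{main_res} and \Cref{MT210} are in hand, and the only points demanding care are (i) correctly identifying which truncation caps in \eqref{eqn:s_rates} are active, which is what separates the clean $h^r$ behavior at $k=1$ from the saturated $h^{1/2}$ behavior at $k=0$, and (ii) the threshold $r>\tfrac12$ forced by the strict inequality $r_{\bm q}>0$.
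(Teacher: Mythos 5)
Your proposal is correct and is exactly the argument the paper intends: the paper offers no separate proof of \Cref{cor:main_result} beyond the remark that it follows by ``specializing to the 2D case,'' i.e., by feeding the regularity indices $r_{\bm q}=r-\tfrac12$, $r_{\bm p}=r_y=r+\tfrac12$, $r_z=r+\tfrac32$ from \Cref{MT210} into the truncation formulas \eqref{eqn:s_rates} of \Cref{main_res} and simplifying the exponents, which is precisely your bookkeeping. Your identification of which caps are active for $k=0$ versus $k=1$, and of $r>\tfrac12$ as the common form of the standing inequalities \eqref{eqn:s_rates_ineq}, matches the intended reading (the minor slip of citing $k+1$ rather than $k+2$ as the cap for $s_y$ is harmless since $r+\tfrac12<2<3$ either way).
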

\begin{remark}
	As in \cite{HuMateosSinglerZhangZhang1,HuMateosSinglerZhangZhang2}, when $ k = 1 $ the convergence rates are optimal for the control and the flux $ \bm q $ and suboptimal for the other variables. Compared to the HDG method used in \cite{HuMateosSinglerZhangZhang1,HuMateosSinglerZhangZhang2}, we obtain the same convergence rates for the  EDG and IEDG methods.
\end{remark}

\subsection{Preliminary Material}
\label{sec:Projectionoperator}
We introduce the standard $L^2$-orthogonal projection operators $\bm{\Pi}_h^k: [L^2(K)]^d\to [\mathcal P^k(K)]^d$ and $\Pi_h^{k+1}: L^2(K)\to \mathcal P^{k+1}(K)$, which satisfy
\begin{subequations} \label{def_L2}
	\begin{align}
	(\bm \Pi_h^k \bm q, \bm r)_{K}&=(\bm q,\bm r)_{K} \quad \forall \bm r\in [{\mathcal{ P}}^k(K)]^d,\\
	(\Pi_h^{k+1} y,w)_{K}&=(y,w)_{K}\quad \forall w\in \mathcal{P}^{k+1}(K).
	\end{align}
\end{subequations}
Moreover, we use the following well-known bounds:
\begin{subequations}\label{classical_ine}
	\begin{align}
	\norm {\bm q -\bm\Pi_h^k \bm q}_{\mathcal T_h} &\le Ch^{s_{\bm q}} \norm{\bm q}_{s_{\bm q}},\quad\norm {y -{\Pi_h^{k+1} y}}_{\mathcal T_h} \le Ch^{s_{y}} \norm{y}_{s_{y}},\\
	\norm {y -{\Pi_h^{k+1} y}}_{\partial\mathcal T_h} &\le Ch^{s_{y}-\frac 1 2} \norm{y}_{s_{y}}, \  
	\norm {w}_{\partial \mathcal T_h} \le Ch^{-\frac 12} \norm {w}_{ \mathcal T_h}, \  \forall w\in W_h.
	\end{align}
\end{subequations}
where $ s_{\bm q} $ and $ s_y $ are defined in \Cref{main_res}.  We have the same projection error bounds for $\bm p$ and $z$. 

Since we only assume $y\in H^{r_y}(\Omega)$ with $r_y>1$, certain components of the solution may not be continuous; for example, we cannot guarantee $y$ is continuous on $\Omega$ when $d=3$. Therefore, the standard Lagrange interpolation operator is not applicable; hence we utilize the Scott-Zhang interpolation operator $\mathcal I_h^{k+1}: H^{1}(\Omega)\to \widetilde W_h$ from \cite{Scott_Zhang_Ih_MathComp_1990}, where 
\begin{align*}
\widetilde W_h:=\{w\in \mathcal C^0(\Omega): w|_K\in \mathcal P^{k+1}(K)\}.
\end{align*}
The following bound is found in \cite[Theorem 4.1]{Scott_Zhang_Ih_MathComp_1990}:
\begin{align}\label{scottzhang2}
\norm {y -{ \mathcal I_h^{k+1} y}}_{\mathcal T_h} \le C  h^{s_y} \norm{y}_{s_y}.
\end{align}
By an inverse inequality,  a trace inequality and \Cref{scottzhang2} we obtain
\begin{align}\label{scottzhang}
\norm {y -{ \mathcal I_h^{k+1} y}}_{\partial \mathcal T_h} \le C  h^{s_y-1/2} \norm{y}_{s_y}.
\end{align}

Next, for any $(\bm q_h,y_h,\widehat y_h^o;\bm r_1,w_1,\mu_1)\in [\bm V_h\times W_h\times M_h(o)]^2$ and $(\bm p_h,z_h,\widehat z_h^o;\bm r_2,\\ w_2,\mu_2)\in [\bm V_h\times W_h\times M_h(o)]^2$ ,  define  the operators  $ \mathscr B_1$ and $ \mathscr B_2 $ by 
\begin{align}
\hspace{1em}&\hspace{-1em} \mathscr  B_1( \bm q_h,y_h,\widehat y_h^o;\bm r_1,w_1,\mu_1) \nonumber\\
& =\varepsilon^{-1} (\bm q_h,\bm r_1)_{\mathcal T_h}-( y_h,\nabla\cdot\bm r_1)_{\mathcal T_h}+\langle \widehat y_h^o,\bm r_1\cdot\bm n\rangle_{\partial\mathcal T_h\backslash \mathcal E_h^\partial} \nonumber\\ 
& \quad  +(\nabla\cdot \bm q_h,w_1)_{\mathcal{T}_h} -(\bm \beta y_h,  \nabla w_1)_{\mathcal T_h}-(\nabla\cdot\bm\beta y_h,w_1)_{\mathcal T_h}\nonumber\\
& \quad +\langle (h^{-1} + \tau_1)y_h, w_1\rangle_{\partial\mathcal T_h}  -\langle (h^{-1}+\tau_1 - \bm{\beta}\cdot \bm n) \widehat y_h^o,w_1\rangle_{\partial\mathcal T_h\backslash \mathcal E_h^\partial}\nonumber\\
& \quad -\langle \bm q_h\cdot \bm n +(h^{-1} + \tau_1)(y_h-\widehat y_h^o),\mu_1\rangle_{\partial\mathcal T_h\backslash\mathcal E_h^{\partial}},\label{def_B1}\\
\hspace{1em}&\hspace{-1em} \mathscr B_2 (\bm p_h,z_h,\widehat z_h^o;\bm r_2, w_2,\mu_2)\nonumber\\
&=\varepsilon^{-1} (\bm p_h,\bm r_2)_{\mathcal T_h}-( z_h,\nabla\cdot\bm r_2)_{\mathcal T_h}+\langle \widehat z_h^o,\bm r_2\cdot\bm n\rangle_{\partial\mathcal T_h\backslash\mathcal E_h^\partial}\nonumber\\
&\quad +(\nabla\cdot\bm p_h,w_2)_{\mathcal T_h}  +(\bm \beta z_h,  \nabla w_2)_{\mathcal T_h}+\langle (h^{-1}+\tau_2) z_h, w_2\rangle_{\partial\mathcal T_h}\nonumber\\
&\quad -\langle ( h^{-1}+\tau_2 + \bm \beta\cdot \bm n)\widehat z_h^o ,w_2\rangle_{\partial\mathcal T_h\backslash\mathcal E_h^\partial}\nonumber\\
&  \quad -\langle  {\bm p}_h\cdot\bm n +(h^{-1} + \tau_2)(z_h-\widehat z_h^o), \mu_2\rangle_{\partial\mathcal T_h\backslash\mathcal E_h^{\partial}}\label{def_B2}.
\end{align}

Using this definition, we rewrite the  EDG (or IEDG) optimality system \eqref{HDG_discrete2} as follows: find $({\bm{q}}_h,{\bm{p}}_h,y_h,z_h,\widehat y_h^o,\widehat z_h^o,u_h)\in \bm{V}_h\times\bm{V}_h\times W_h \times W_h\times {M}_h(o)\times {M}_h(o)\times {M}_h(\partial)$  such that
\begin{subequations}\label{IEDG_full_discrete}
	\begin{align}
	\mathscr B_1 (\bm q_h,y_h,\widehat y_h^o;\bm r_1,w_1,\mu_1) &= -\langle u_h, \bm r_1\cdot \bm{n} - (h^{-1}+\tau_1 - \bm{\beta} \cdot\bm n) w_1 \rangle_{{\mathcal E_h^{\partial}}}\nonumber\\
	&\quad +(f, w_1)_{{\mathcal{T}_h}}, \label{IEDG_full_discrete_a}\\
	\mathscr B_2(\bm p_h,z_h,\widehat z_h^o;\bm r_2,w_2,\mu_2) &= (y_h -y_d,w_2)_{\mathcal T_h},\label{IEDG_full_discrete_b}\\
	\langle {\bm{p}}_h\cdot \bm{n} + (h^{-1}+\tau_2)  z_h, \mu_3\rangle_{{{\mathcal E_h^{\partial}}}} &= -\gamma \langle u_h, \mu_3 \rangle_{{\mathcal E_h^{\partial}}}, \label{IEDG_full_discrete_c}
	\end{align}
\end{subequations}
for all $\left(\bm{r}_1, \bm{r}_2, w_1, w_2, \mu_1, \mu_2, {\mu}_3\right)\in \bm{V}_h\times\bm{V}_h\times W_h \times W_h\times {M}_h(o)\times {M}_h(o)\times {M}_h(\partial)$.

Next, we present three basic but fundamental results. The proofs follow similar arguments in \cite{HuShenSinglerZhangZheng_HDG_Dirichlet_control1,HuMateosSinglerZhangZhang1,HuMateosSinglerZhangZhang2} and are omitted. 
\begin{lemma}\label{property_B}
	For any $ ( \bm v_h, w_h, \mu_h ) \in \bm V_h \times W_h \times {M}_h(o) $, we have
	\begin{align*}
	\hspace{1em}&\hspace{-1em} \mathscr B_1(\bm v_h,w_h,\mu_h;\bm v_h,w_h,\mu_h)\\
	&=\varepsilon^{-1}(\bm v_h,\bm v_h)_{\mathcal T_h}+ \langle (h^{-1}+\tau_1 - \frac 12 \bm \beta\cdot\bm n)(w_h-\mu_h),w_h-\mu_h\rangle_{\partial\mathcal T_h\backslash \mathcal E_h^\partial}\\
	&\quad-\frac 1 2(\nabla\cdot\bm\beta w_h,w_h)_{\mathcal T_h} +\langle (h^{-1}+\tau_1-\frac12\bm \beta\cdot\bm n) w_h,w_h\rangle_{\mathcal E_h^\partial},\\
	\hspace{1em}&\hspace{-1em}\mathscr B_2(\bm v_h,w_h,\mu_h;\bm v_h,w_h,\mu_h)\\
	&=\varepsilon^{-1}(\bm v_h,\bm v_h)_{\mathcal T_h}+ \langle (h^{-1}+\tau_2 + \frac 12 \bm \beta\cdot\bm n)(w_h-\mu_h),w_h-\mu_h\rangle_{\partial\mathcal T_h\backslash \mathcal E_h^\partial}\\
	&\quad-\frac 1 2(\nabla\cdot\bm\beta w_h,w_h)_{\mathcal T_h} +\langle (h^{-1}+\tau_2+\frac12\bm \beta\cdot\bm n) w_h,w_h\rangle_{\mathcal E_h^\partial}.
	\end{align*}
\end{lemma}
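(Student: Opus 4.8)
The plan is to prove both identities by direct substitution into the definitions \eqref{def_B1} and \eqref{def_B2}, followed by elementary algebra: two structural cancellations, one elementwise integration by parts of the convection term, and finally a completion of the square on the interior faces. I would carry out the details only for $\mathscr B_1$, since the computation for $\mathscr B_2$ is identical up to the sign conventions already built into its definition.

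First I would set $(\bm q_h,y_h,\widehat y_h^o)=(\bm r_1,w_1,\mu_1)=(\bm v_h,w_h,\mu_h)$ in \eqref{def_B1}. Two pairs of terms then cancel by symmetry of the $L^2$ inner products: the volume pair $-(w_h,\nabla\cdot\bm v_h)_{\mathcal T_h}$ and $+(\nabla\cdot\bm v_h,w_h)_{\mathcal T_h}$, and the interior flux-trace pair $\langle\mu_h,\bm v_h\cdot\bm n\rangle_{\partial\mathcal T_h\backslash\mathcal E_h^\partial}$ and $-\langle\bm v_h\cdot\bm n,\mu_h\rangle_{\partial\mathcal T_h\backslash\mathcal E_h^\partial}$. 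This leaves the $\varepsilon^{-1}$ term, the two convection-type volume terms, and a collection of face terms. For the convection volume term I would use $w_h\nabla w_h=\tfrac12\nabla(w_h^2)$ and integrate by parts on each element to obtain
\begin{align*}
-(\bm\beta w_h,\nabla w_h)_{\mathcal T_h}=\tfrac12(\nabla\cdot\bm\beta\,w_h,w_h)_{\mathcal T_h}-\tfrac12\langle\bm\beta\cdot\bm n\,w_h,w_h\rangle_{\partial\mathcal T_h},
\end{align*}
which, combined with the already-present $-(\nabla\cdot\bm\beta\,w_h,w_h)_{\mathcal T_h}$, yields the desired volume term $-\tfrac12(\nabla\cdot\bm\beta\,w_h,w_h)_{\mathcal T_h}$ plus a face term $-\tfrac12\langle\bm\beta\cdot\bm n\,w_h,w_h\rangle_{\partial\mathcal T_h}$.

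Next I would split every remaining face integral over $\partial\mathcal T_h$ into its parts over $\mathcal E_h^\partial$ and over $\partial\mathcal T_h\backslash\mathcal E_h^\partial$. On $\mathcal E_h^\partial$ only $\langle(h^{-1}+\tau_1)w_h,w_h\rangle$ and $-\tfrac12\langle\bm\beta\cdot\bm n\,w_h,w_h\rangle$ survive, giving exactly the claimed boundary-face term $\langle(h^{-1}+\tau_1-\tfrac12\bm\beta\cdot\bm n)w_h,w_h\rangle_{\mathcal E_h^\partial}$. On the interior faces I would collect the four remaining contributions and compare with the expanded square $\langle(h^{-1}+\tau_1-\tfrac12\bm\beta\cdot\bm n)(w_h-\mu_h),w_h-\mu_h\rangle_{\partial\mathcal T_h\backslash\mathcal E_h^\partial}$; the $w_h^2$ coefficients and the $w_h\mu_h$ cross terms match directly, leaving a single discrepancy of $\tfrac12\langle\bm\beta\cdot\bm n\,\mu_h,\mu_h\rangle_{\partial\mathcal T_h\backslash\mathcal E_h^\partial}$.

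This residual $\mu_h^2$ term is the only real subtlety, and it is where the embedded structure enters decisively. Because $\mu_h\in M_h(o)$ is single-valued (continuous) across each interior face while the outward normal $\bm n$ reverses between the two elements sharing that face, the two elementwise contributions to $\langle\bm\beta\cdot\bm n\,\mu_h,\mu_h\rangle_{\partial\mathcal T_h\backslash\mathcal E_h^\partial}$ cancel, so the quantity vanishes and the completion of the square is exact; this establishes the identity for $\mathscr B_1$. The identity for $\mathscr B_2$ follows by the same four steps: the convection term there carries the opposite sign and there is no volume $\nabla\cdot\bm\beta$ term, which is precisely compensated by the $+\bm\beta\cdot\bm n$ weights in \eqref{def_B2}, and the same cancellation of $\langle\bm\beta\cdot\bm n\,\mu_h,\mu_h\rangle$ on interior faces closes the argument.
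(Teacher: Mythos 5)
Your proof is correct: the paper itself omits the proof of this lemma (deferring to the arguments in \cite{HuShenSinglerZhangZheng_HDG_Dirichlet_control1,HuMateosSinglerZhangZhang1,HuMateosSinglerZhangZhang2}), and your direct expansion --- the two symmetric cancellations, the elementwise identity $-(\bm\beta w_h,\nabla w_h)_{\mathcal T_h}=\tfrac12(\nabla\cdot\bm\beta\, w_h,w_h)_{\mathcal T_h}-\tfrac12\langle\bm\beta\cdot\bm n\, w_h,w_h\rangle_{\partial\mathcal T_h}$, and the completion of the square on interior faces --- is exactly the standard argument used there. You also correctly identify the one genuinely non-trivial point, namely that the leftover $\tfrac12\langle\bm\beta\cdot\bm n\,\mu_h,\mu_h\rangle_{\partial\mathcal T_h\backslash\mathcal E_h^\partial}$ vanishes because $\mu_h\in M_h(o)$ is single-valued on each interior face while the outward normals of the two adjacent elements are opposite; all coefficients of the $w_h^2$, $w_h\mu_h$, and $\mu_h^2$ terms check out for both $\mathscr B_1$ and $\mathscr B_2$.
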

%
\begin{lemma}\label{identical_equa}
	For any $ ( \bm v_1, \bm v_2,w_1, w_2,  \mu_1,\mu_2 ) \in \bm V_h\times \bm  V_h \times W_h\times W_h \times {M}_h(o)\times {M}_h(o) $, we have
	\begin{align*}
	\mathscr B_1 (\bm v_1,w_1,\mu_1;\bm v_2,-w_2,-\mu_2) + \mathscr B_2 (\bm v_2,w_2,\mu_2;-\bm v_1,w_1,\mu_1) = 0.
	\end{align*}
\end{lemma}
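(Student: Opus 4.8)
The plan is to prove the identity by directly expanding both bilinear forms from \eqref{def_B1} and \eqref{def_B2} and then tracking the cancellations; the only non-routine ingredients are an elementwise integration by parts for the convection terms, the stabilization relation $\tau_2 = \tau_1 - \bm\beta\cdot\bm n$ from \eqref{tau1tau2}, and the single-valuedness of functions in $M_h(o)$ on the interior skeleton. First I would substitute the arguments: in $\mathscr B_1(\bm v_1, w_1, \mu_1; \bm v_2, -w_2, -\mu_2)$ the state triple is $(\bm v_1, w_1, \mu_1)$ and the test triple is $(\bm v_2, -w_2, -\mu_2)$, while in $\mathscr B_2(\bm v_2, w_2, \mu_2; -\bm v_1, w_1, \mu_1)$ the state triple is $(\bm v_2, w_2, \mu_2)$ and the test triple is $(-\bm v_1, w_1, \mu_1)$. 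After distributing the minus signs, the three ``mixed-type'' volume pairs cancel immediately: $\varepsilon^{-1}(\bm v_1, \bm v_2)_{\mathcal T_h}$ against $-\varepsilon^{-1}(\bm v_2, \bm v_1)_{\mathcal T_h}$, the term $-(w_1, \nabla\cdot\bm v_2)_{\mathcal T_h}$ against $+(\nabla\cdot\bm v_2, w_1)_{\mathcal T_h}$, and $-(\nabla\cdot\bm v_1, w_2)_{\mathcal T_h}$ against $+(w_2, \nabla\cdot\bm v_1)_{\mathcal T_h}$. Similarly, the two flux-trace couplings $\langle\mu_1, \bm v_2\cdot\bm n\rangle_{\partial\mathcal T_h\backslash\mathcal E_h^\partial}$ and $-\langle\bm v_2\cdot\bm n, \mu_1\rangle_{\partial\mathcal T_h\backslash\mathcal E_h^\partial}$ cancel, as does the analogous pair involving $\mu_2$.

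Next I would treat the convection volume terms. The surviving contributions are $(\bm\beta w_1, \nabla w_2)_{\mathcal T_h} + (\bm\beta w_2, \nabla w_1)_{\mathcal T_h} + (\nabla\cdot\bm\beta\, w_1, w_2)_{\mathcal T_h}$. Writing $w_1\nabla w_2 + w_2\nabla w_1 = \nabla(w_1 w_2)$ and integrating by parts on each element converts the first two into $-(\nabla\cdot\bm\beta, w_1 w_2)_{\mathcal T_h} + \langle\bm\beta\cdot\bm n, w_1 w_2\rangle_{\partial\mathcal T_h}$, so that after cancelling the $\nabla\cdot\bm\beta$ contribution only the boundary term $\langle\bm\beta\cdot\bm n, w_1 w_2\rangle_{\partial\mathcal T_h}$ remains. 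I then collect the stabilization boundary terms on the full $\partial\mathcal T_h$: using $\tau_2 = \tau_1 - \bm\beta\cdot\bm n$, the pair $-\langle(h^{-1}+\tau_1) w_1, w_2\rangle_{\partial\mathcal T_h} + \langle(h^{-1}+\tau_2) w_2, w_1\rangle_{\partial\mathcal T_h}$ equals $-\langle\bm\beta\cdot\bm n, w_1 w_2\rangle_{\partial\mathcal T_h}$, which exactly annihilates the leftover convection boundary term.

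The last and most delicate part is the collection of the remaining interface stabilization terms on $\partial\mathcal T_h\backslash\mathcal E_h^\partial$, involving the products $\mu_1 w_2$, $w_1\mu_2$, and $\mu_1\mu_2$. Again using $\tau_2 = \tau_1 - \bm\beta\cdot\bm n$ (so that $h^{-1}+\tau_2+\bm\beta\cdot\bm n = h^{-1}+\tau_1$), the $\mu_1 w_2$ terms cancel against each other, and likewise the $w_1\mu_2$ terms, leaving only $-\langle\bm\beta\cdot\bm n\,\mu_1, \mu_2\rangle_{\partial\mathcal T_h\backslash\mathcal E_h^\partial}$. This is where the trace space structure enters: since $\mu_1, \mu_2 \in M_h(o)$ are single-valued on the interior skeleton, on each interior face $e = \partial K_1\cap\partial K_2$ the two contributions carry opposite outward normals $\bm n_1 = -\bm n_2$, whence $\bm\beta\cdot\bm n_1 + \bm\beta\cdot\bm n_2 = 0$ and the term vanishes face by face. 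Summing all the cancellations yields the claimed identity.

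I expect the main obstacle to be purely organizational: the expansion produces on the order of twenty boundary terms, and the sign discipline together with the consistent use of $\tau_2 = \tau_1 - \bm\beta\cdot\bm n$ must be maintained throughout. The one genuinely structural point, rather than routine bookkeeping, is the final cancellation of $-\langle\bm\beta\cdot\bm n\,\mu_1, \mu_2\rangle_{\partial\mathcal T_h\backslash\mathcal E_h^\partial}$; this step fails for a fully discontinuous trace space and is precisely what the continuity of $M_h(o)$ across interior faces buys.
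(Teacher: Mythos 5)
Your computation is correct and is exactly the direct term-by-term expansion that the paper omits (it defers to its HDG references, where the same cancellations via $\tau_2=\tau_1-\bm\beta\cdot\bm n$, elementwise integration by parts on the convection terms, and the two-sided counting of interior faces are carried out). One small correction to your closing remark: the final cancellation of $-\langle\bm\beta\cdot\bm n\,\mu_1,\mu_2\rangle_{\partial\mathcal T_h\backslash\mathcal E_h^\partial}$ requires only that $\mu_1,\mu_2$ be single-valued on each interior face, a property shared by the fully discontinuous HDG trace space $M_h^{\textup{HDG}}$ (whose functions are single-valued per face, merely discontinuous across face boundaries), so this step does not in fact rely on the continuity of $M_h(o)$.
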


\begin{proposition}\label{ex_uni}
	There exists a unique solution of the discrete system \eqref{IEDG_full_discrete}.
\end{proposition}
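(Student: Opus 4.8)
The plan is to prove existence and uniqueness for the finite-dimensional linear system \eqref{IEDG_full_discrete} by reducing to uniqueness: since this is a square linear system (the number of trial unknowns equals the number of test functions, as both range over the same space $\bm{V}_h\times\bm{V}_h\times W_h \times W_h\times {M}_h(o)\times {M}_h(o)\times {M}_h(\partial)$), it suffices to show that the only solution of the homogeneous system (with $f=0$, $y_d=0$) is the trivial one. So I would set $f=0$ and $y_d=0$ and aim to conclude $\bm q_h=\bm p_h=\bm 0$, $y_h=z_h=0$, $\widehat y_h^o=\widehat z_h^o=0$, and $u_h=0$.

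\textbf{Energy argument.} The key tool is \Cref{property_B}, which gives coercivity-type identities for $\mathscr B_1$ and $\mathscr B_2$. I would take the homogeneous version of \eqref{IEDG_full_discrete_a} and \eqref{IEDG_full_discrete_b} and test strategically. First, test \eqref{IEDG_full_discrete_b} with $(\bm r_2,w_2,\mu_2)=(\bm p_h,z_h,\widehat z_h^o)$ to get $\mathscr B_2(\bm p_h,z_h,\widehat z_h^o;\bm p_h,z_h,\widehat z_h^o)=(y_h,z_h)_{\mathcal T_h}$. Next, test \eqref{IEDG_full_discrete_a} with $(\bm q_h,y_h,\widehat y_h^o)$ to obtain an expression for $\mathscr B_1(\bm q_h,y_h,\widehat y_h^o;\bm q_h,y_h,\widehat y_h^o)$ involving the boundary term $-\langle u_h, \bm q_h\cdot\bm n - (h^{-1}+\tau_1-\bm\beta\cdot\bm n)y_h\rangle_{\mathcal E_h^\partial}$. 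To couple the two systems I would invoke \Cref{identical_equa}: testing \eqref{IEDG_full_discrete_a} with $(\bm p_h,-z_h,-\widehat z_h^o)$ and \eqref{IEDG_full_discrete_b} with $(-\bm q_h,y_h,\widehat y_h^o)$ and adding, the left-hand sides cancel by \Cref{identical_equa}, producing an identity that relates the $(y_h,z_h)_{\mathcal T_h}$ cross terms to the control boundary contributions and, via the optimality condition \eqref{IEDG_full_discrete_c} tested with $\mu_3=u_h$, to $\gamma\langle u_h,u_h\rangle_{\mathcal E_h^\partial}$. Combining these, the cross terms $(y_h,z_h)_{\mathcal T_h}$ should cancel and I expect to arrive at an identity of the form
\begin{align*}
\mathscr B_1(\bm q_h,y_h,\widehat y_h^o;\bm q_h,y_h,\widehat y_h^o)+\mathscr B_2(\bm p_h,z_h,\widehat z_h^o;\bm p_h,z_h,\widehat z_h^o)+\gamma\norm{u_h}_{\mathcal E_h^\partial}^2=0.
\end{align*}

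\textbf{Positivity and conclusion.} Each $\mathscr B_i$ diagonal term from \Cref{property_B} is a sum of the positive definite volume term $\varepsilon^{-1}\norm{\bm v_h}_{\mathcal T_h}^2$, boundary jump terms weighted by $h^{-1}+\tau_i\mp\frac12\bm\beta\cdot\bm n$, a boundary term on $\mathcal E_h^\partial$, and the term $-\frac12(\nabla\cdot\bm\beta\, w_h,w_h)_{\mathcal T_h}$, which is nonnegative because $\nabla\cdot\bm\beta\le 0$ by \eqref{eqn:beta_assumptions1}. Using $\tau_2=\tau_1-\bm\beta\cdot\bm n$ from \eqref{tau1tau2}, the boundary weights $h^{-1}+\tau_1-\frac12\bm\beta\cdot\bm n$ and $h^{-1}+\tau_2+\frac12\bm\beta\cdot\bm n$ coincide and are positive for $h$ small since $\tau_1>0$ and $\bm\beta\cdot\bm n$ is bounded; hence every term in the displayed identity is nonnegative and their sum is zero. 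This forces $\bm q_h=\bm p_h=\bm 0$, $u_h=0$, and the vanishing of all boundary jumps $y_h-\widehat y_h^o=0$ and $z_h-\widehat z_h^o=0$ on $\partial\mathcal T_h\backslash\mathcal E_h^\partial$ together with $y_h=z_h=0$ on $\mathcal E_h^\partial$. From the vanishing fluxes and jumps I would then return to the homogeneous equations \eqref{IEDG_full_discrete_a}–\eqref{IEDG_full_discrete_b}, using the remaining test functions $(\bm r_1,w_1)$ and $(\bm r_2,w_2)$ to extract, via integration by parts on each element, that $y_h$ and $z_h$ are in fact constant on each element and then globally zero, with $\widehat y_h^o=\widehat z_h^o=0$ following from the jump conditions; standard manipulations of the local EDG solver give this.

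\textbf{The main obstacle} I anticipate is verifying that the boundary weight $h^{-1}+\tau_1-\frac12\bm\beta\cdot\bm n$ is genuinely positive so that the jump terms control rather than corrupt the energy; this requires the $h^{-1}$ penalty to dominate the (bounded, possibly sign-indefinite) convective contribution $\frac12\bm\beta\cdot\bm n$, which holds for $h$ sufficiently small given $\bm\beta\in[L^\infty(\Omega)]^d$. A secondary subtlety is that for $k=0$ the volume gradient terms degenerate, so the final step recovering $y_h=z_h=0$ from constants must lean on the boundary conditions (the $\mathcal E_h^\partial$ term and the continuity enforced through $M_h(o)$) rather than on gradient control; since the paper states the proof follows the arguments in \cite{HuShenSinglerZhangZheng_HDG_Dirichlet_control1,HuMateosSinglerZhangZhang1,HuMateosSinglerZhangZhang2}, I would adapt those local-solvability arguments to close the case.
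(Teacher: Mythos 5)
Your overall strategy is exactly the one the paper intends (it omits the proof and defers to \cite{HuShenSinglerZhangZheng_HDG_Dirichlet_control1,HuMateosSinglerZhangZhang1,HuMateosSinglerZhangZhang2}, where this energy/duality argument is carried out): reduce the square homogeneous system to showing the zero solution, and combine \Cref{property_B}, \Cref{identical_equa} and the optimality condition tested with $\mu_3=u_h$. Two refinements are needed, though. First, the combined identity you "expect to arrive at" is not what the manipulations produce: the diagonal test of \eqref{IEDG_full_discrete_a} leaves the sign-indefinite boundary term $-\langle u_h,\bm q_h\cdot\bm n-(h^{-1}+\tau_2)y_h\rangle_{\mathcal E_h^\partial}$ inside $\mathscr B_1(\bm q_h,y_h,\widehat y_h^o;\bm q_h,y_h,\widehat y_h^o)$, and nothing cancels it a priori, so the displayed identity cannot be used as a sum of nonnegative terms. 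The correct sequencing is: apply \Cref{identical_equa} with the cross tests $(\bm p_h,-z_h,-\widehat z_h^o)$ and $(-\bm q_h,y_h,\widehat y_h^o)$ to get $\norm{y_h}_{\mathcal T_h}^2=\langle u_h,\bm p_h\cdot\bm n+(h^{-1}+\tau_2)z_h\rangle_{\mathcal E_h^\partial}=-\gamma\norm{u_h}_{\mathcal E_h^\partial}^2$ by \eqref{IEDG_full_discrete_c}, which already forces $y_h=0$ and $u_h=0$; only then do the diagonal identities $\mathscr B_1(\cdot;\cdot)=0$ and $\mathscr B_2(\cdot;\cdot)=(y_h,z_h)_{\mathcal T_h}=0$ yield $\bm q_h=\bm p_h=\bm 0$ and the vanishing of the jumps, after which your local integration-by-parts step recovers $z_h=\widehat z_h^o=\widehat y_h^o=0$. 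Second, the "main obstacle" you flag is not an obstacle: since $\tau_1,\tau_2>0$ and $\tau_2=\tau_1-\bm\beta\cdot\bm n$, one has $h^{-1}+\tau_1-\tfrac12\bm\beta\cdot\bm n=h^{-1}+\tau_2+\tfrac12\bm\beta\cdot\bm n=h^{-1}+\tfrac12(\tau_1+\tau_2)>0$ unconditionally, with no smallness restriction on $h$.
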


Next, we introduce the improved trace inequality.
\begin{lemma}\cite[Lemma 2.4]{Cai_He_Zhang_Adaptive_SINUM_2017}\label{improved_trace_inequality}
	Let $E$ be a face of $K\in\mathcal T_h$. If $\bm q \in  [H^{s_{\bm q}}(\Omega)]^d\cap H(\textup{div}, \Omega)$ with $s_{\bm q}>0$, then for all $\mu\in \mathcal P^{k+1}(E)$, we have 
	\begin{align}
	\langle \bm q\cdot\bm n, \mu \rangle_E \le C h^{-1/2} \|\mu\|_{E}(\|\bm q\|_{K}+h\|\nabla\cdot\bm q\|_K).
	\end{align}
	
\end{lemma}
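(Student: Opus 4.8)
The plan is to localize to a single element, convert the boundary pairing on $E$ into a volume integral over $K$, and thereby produce $\nabla\cdot\bm q$ in place of the full gradient $\nabla\bm q$; trading $\nabla\bm q$ for $\nabla\cdot\bm q$ is exactly the improvement over the classical trace inequality $\norm{\bm q}_E^2\le C(h^{-1}\norm{\bm q}_K^2+h\norm{\nabla\bm q}_K^2)$. First I would fix $K\in\mathcal T_h$ with face $E$ and pass to a fixed reference simplex $\hat K$ with corresponding face $\hat E$ through the affine map $F_K(\hat x)=B_K\hat x+b_K$, transporting $\bm q$ by the contravariant (Piola) transform so that the normal-trace pairing, the $H(\mathrm{div})$ structure, and the divergence are all preserved up to powers of $h$. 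By quasi-uniformity $|\det B_K|\sim h^d$ and $\norm{B_K},\norm{B_K^{-1}}\sim h,h^{-1}$, so it suffices to establish the $h$-free estimate $\langle\hat{\bm q}\cdot\hat{\bm n},\hat\mu\rangle_{\hat E}\le C\norm{\hat\mu}_{\hat E}(\norm{\hat{\bm q}}_{\hat K}+\norm{\hat\nabla\cdot\hat{\bm q}}_{\hat K})$ on $\hat K$ and then track the scalings to recover the stated powers $h^{-1/2}$ and $h$.

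The mechanism driving the reference estimate is a lifting of $\hat\mu$ that ``sees'' only the face $\hat E$. Conceptually one seeks $\tilde\mu\in H^1(\hat K)$ with $\tilde\mu|_{\hat E}=\hat\mu$ whose trace vanishes on the remaining faces of $\partial\hat K$, together with $\norm{\tilde\mu}_{H^1(\hat K)}\le C\norm{\hat\mu}_{\hat E}$; the latter would follow from equivalence of norms on a finite-dimensional space once a concrete extension operator is fixed. Granting such a $\tilde\mu$, the divergence theorem applies since $\hat{\bm q}\in H(\mathrm{div},\hat K)$ and $\tilde\mu$ vanishes on $\partial\hat K\setminus\hat E$, giving $\langle\hat{\bm q}\cdot\hat{\bm n},\hat\mu\rangle_{\hat E}=(\hat{\bm q},\hat\nabla\tilde\mu)_{\hat K}+(\hat\nabla\cdot\hat{\bm q},\tilde\mu)_{\hat K}$, whence Cauchy--Schwarz yields $\norm{\hat{\bm q}}_{\hat K}\norm{\hat\nabla\tilde\mu}_{\hat K}+\norm{\hat\nabla\cdot\hat{\bm q}}_{\hat K}\norm{\tilde\mu}_{\hat K}\le C\norm{\hat\mu}_{\hat E}(\norm{\hat{\bm q}}_{\hat K}+\norm{\hat\nabla\cdot\hat{\bm q}}_{\hat K})$. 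Scaling back through $F_K$ then turns $\norm{\hat\nabla\tilde\mu}_{\hat K}$ and $\norm{\tilde\mu}_{\hat K}$ into the weights $h^{-1/2}$ and $h^{1/2}$ multiplying $\norm{\bm q}_K$ and $\norm{\nabla\cdot\bm q}_K$, producing precisely the claimed bound. Note that no $L^2(E)$ trace of $\bm q$ is ever used, which is what should permit the low-regularity range $0<s_{\bm q}\le 1/2$; the hypothesis $s_{\bm q}>0$ enters only to make the left-hand side meaningful, since for $\bm q\in[H^{s_{\bm q}}(\Omega)]^d$ the normal trace on $E$ lies in $H^{s_{\bm q}-1/2}(E)$ and pairs with the smooth $\mu$, and the Green identity extends to such $\bm q$ by density of smooth fields in $[H^{s_{\bm q}}(\Omega)]^d\cap H(\mathrm{div},\Omega)$.

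The main obstacle is the localizing lifting $\tilde\mu$ itself. A continuous $H^1$ extension equal to $\hat\mu$ on $\hat E$ and vanishing on the adjacent faces cannot exist when $\hat\mu$ does not vanish on the ridge $\partial\hat E$: the two prescriptions clash there, and equivalently the extension-by-zero of $\hat\mu$ fails to lie in $H^{1/2}(\partial\hat K)$. I would sidestep this by not insisting on a scalar $H^1$ extension but instead replacing $\bm q$ with its $\mathrm{BDM}_{k+1}$ interpolant $\Pi\bm q$, which by construction preserves the face moments against $\mathcal P^{k+1}(E)$, so that $\langle\bm q\cdot\bm n,\mu\rangle_E=\langle(\Pi\bm q)\cdot\bm n,\mu\rangle_E$. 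The right-hand side now involves only the polynomial $(\Pi\bm q)\cdot\bm n\in L^2(E)$ and is controlled by a polynomial inverse trace inequality, $\langle(\Pi\bm q)\cdot\bm n,\mu\rangle_E\le Ch^{-1/2}\norm{\Pi\bm q}_K\norm{\mu}_E$. The argument then rests on the interpolation stability $\norm{\Pi\bm q}_K\le C(\norm{\bm q}_K+h\norm{\nabla\cdot\bm q}_K)$ for $\bm q\in[H^{s_{\bm q}}(\Omega)]^d\cap H(\mathrm{div},\Omega)$ with $s_{\bm q}>0$; this is the step that genuinely requires care, and it is precisely where the divergence---rather than the full gradient---is brought in, through the commuting property $\nabla\cdot\Pi\bm q=P_h\,\nabla\cdot\bm q$ with $P_h$ the $L^2$ projection. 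Establishing this stability bound uniformly on the reference element (equivalently, constructing the extension above with the stated constant) is, in my view, the only nontrivial point of the proof.
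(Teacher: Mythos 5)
The paper offers no proof of this lemma---it is quoted verbatim from the cited reference---so I can only assess your argument on its own terms. Your instinct to route the estimate through a face-moment-preserving interpolant is reasonable, and you are right that the naive $H^1$ lifting of $\mu$ localized to $E$ cannot exist; but the step you defer, the stability bound $\norm{\Pi\bm q}_K\le C(\norm{\bm q}_K+h\norm{\nabla\cdot\bm q}_K)$, is not a technical point to be filled in later: it is where the argument collapses. On the reference element the $L^2$ norm on the finite-dimensional space $\mathrm{BDM}_{k+1}(\hat K)$ is equivalent to the maximum of the moduli of the degrees of freedom, and the face degrees of freedom are precisely the moments $\langle\hat{\bm q}\cdot\hat{\bm n},\hat\mu\rangle_{\hat F}$, $\hat\mu\in\mathcal P^{k+1}(\hat F)$. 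Hence bounding $\norm{\Pi\bm q}_K$ by the scaled $H(\mathrm{div})$ norm is equivalent to the inequality you are trying to prove, and the reduction is circular. Worse, neither statement can hold with a constant independent of $\bm q$: on a fixed $K$ with face $E$ and vertex $a\in\partial E$, take $\mu\equiv 1$ and $\bm q_M=\nabla\times\psi_M$ with $\psi_M=\min\{M,\log\log(e^2/|x-a|)\}$ (mollified). Then $\nabla\cdot\bm q_M=0$ and $\norm{\bm q_M}_K=|\psi_M|_{H^1(K)}$ is bounded uniformly in $M$, while $\langle\bm q_M\cdot\bm n,1\rangle_E$ equals the difference of $\psi_M$ at the endpoints of $E$ and tends to infinity. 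These fields are smooth, hence in $[H^s]^d$ for every $s>0$, but their $H^s$ norms blow up; this shows the hypothesis $s_{\bm q}>0$ cannot merely ``make the left-hand side meaningful,'' as you suggest---it must enter the estimate quantitatively.

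The substance of the cited result is a fractional duality argument on the face: for $0<s<1/2$ one writes $\langle\bm q\cdot\bm n,\mu\rangle_E\le\norm{\bm q\cdot\bm n}_{(H^{1/2-s}(E))'}\norm{\mu}_{H^{1/2-s}(E)}$, uses the trace estimate $\norm{\bm q\cdot\bm n}_{(H^{1/2-s}(E))'}\le C(s)\bigl(\norm{\bm q}_{H^s(K)}+\norm{\nabla\cdot\bm q}_K\bigr)$ valid for $\bm q\in[H^s(K)]^d\cap H(\mathrm{div},K)$, and the inverse inequality $\norm{\mu}_{H^{1/2-s}(E)}\le Ch^{s-1/2}\norm{\mu}_E$, which holds because $\mu$ is a polynomial. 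After scaling this produces the stated bound with an additional term $h^{s_{\bm q}}|\bm q|_{H^{s_{\bm q}}(K)}$ inside the parentheses and a constant depending on $s_{\bm q}$; that term is what your counexample-proof target is missing, and your proposal contains no mechanism that would generate it. If you wish to keep the interpolant formulation, you must first establish the face-moment bound by this duality route---after which the interpolant adds nothing.
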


\subsection{Proof of \Cref{main_res}}
\label{ProofofMainResult}

To prove \Cref{main_res}, we follow the strategy in \cite{HuShenSinglerZhangZheng_HDG_Dirichlet_control1} and split the proof into
seven steps.  We consider the following auxiliary problem: find $$({\bm{q}}_h(u),{\bm{p}}_h(u), y_h(u), z_h(u), {\widehat{y}}_h^o(u), {\widehat{z}}_h^o(u))\in \bm{V}_h\times\bm{V}_h\times W_h \times W_h\times {M}_h(o)\times {M}_h(o)$$ such that
\begin{subequations}\label{HDG_inter_u}
	\begin{align}
	\mathscr B_1(\bm q_h(u),y_h(u),\widehat{y}_h^o(u);\bm r_1, w_1,\mu_1)&= - \langle  u, \bm r_1\cdot\bm n - (h^{-1} + \tau_1 - \bm{\beta}\cdot\bm n) w_1 \rangle_{\mathcal E_h^\partial}\nonumber\\
	&\quad  + ( f ,w_1)_{\mathcal T_h} ,\label{IEDG_u_a} \\
	\mathscr B_2(\bm p_h(u),z_h(u),\widehat{z}_h^o(u);\bm r_2, w_2,\mu_2)&=(y_h(u) - y_d, w_2)_{\mathcal T_h},\label{IEDG_u_b}
	\end{align}
\end{subequations}
for all $\left(\bm{r}_1, \bm{r}_2,w_1,w_2,\mu_1,\mu_2\right)\in \bm{V}_h\times\bm{V}_h \times W_h\times W_h\times {M}_h(o)\times {M}_h(o)$.  We begin by bounding the error between the solutions of the auxiliary problem \eqref{HDG_inter_u} and the mixed form \eqref{mixed_a}-\eqref{mixed_d} of the optimality system.

\subsubsection{Step 1: The error equation for part 1 of the auxiliary problem \eqref{IEDG_u_a}.} \label{subsec:proof_step1}

\begin{lemma} \label{Pro_B1}
	For all $\left(\bm{r}_1,w_1,\mu_1\right)\in \bm{V}_h \times W_h\times {M}_h(o)$, we have
	\begin{align*}
	\hspace{1em}&\hspace{-1em}  \mathscr B_1 (\bm \Pi_h^0 {\bm q},\Pi_h^{k+1} { y}, \mathcal I_h^{k+1}  y, \bm r_1, w_1, \mu_1)\\
	&=\langle u,(h^{-1} +\tau_1-\bm \beta \cdot \bm n ) w_1-\bm r_1\cdot \bm n  \rangle_{\mathcal E_h^{\partial}} + (f,w_1)_{\mathcal T_h}-\varepsilon^{-1}( {\bm q} - \bm \Pi_h^0 \bm q, \bm{r}_1)_{{\mathcal{T}_h}}\\
	&\quad  +\langle  \mathcal I_h^{k+1} y - y, \bm r_1\cdot \bm{n} \rangle_{\partial{{\mathcal{T}_h}}\backslash {\mathcal E_h^{\partial}}} - (\bm \Pi_h^0 \bm q - \bm q,\nabla w_1)_{\mathcal{T}_h}\\
	&\quad +  (  \bm \beta (y - \Pi_h^{k+1} y),  \nabla w_1)_{{\mathcal{T}_h}}  + (\nabla\cdot \bm \beta ( y - \Pi_h^{k+1} y), w_1)_{\mathcal T_h}\\
	&\quad +\langle (h^{-1}+\tau_1)(\Pi_h^{k+1} y -\mathcal I_h^{k+1}  y), w_1 - \mu_1 \rangle_{\partial{{\mathcal{T}_h}}\backslash\mathcal E_h^\partial}   \\
	&\quad +\langle (h^{-1}+\tau_1)(\Pi_h^{k+1} y - y), w_1 \rangle_{\mathcal E_h^\partial}\\
	&\quad  + \langle \bm{\beta}\cdot\bm n (\mathcal I_h^{k+1}  y-y), w_1-\mu_1 \rangle_{\partial{{\mathcal{T}_h}}\backslash \mathcal E_h^{\partial}} +\langle  (\bm \Pi_h^0 \bm q - \bm q)\cdot\bm n, w_1 - \mu_1\rangle_{\partial\mathcal T_h\backslash\mathcal E_h^\partial}\\
	&\quad  +\langle  (\bm \Pi_h^0 \bm q - \bm q)\cdot\bm n, w_1\rangle_{\mathcal  E_h^\partial}.
	\end{align*}
\end{lemma}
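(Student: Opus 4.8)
The plan is to substitute the projections $(\bm\Pi_h^0\bm q,\Pi_h^{k+1}y,\mathcal I_h^{k+1}y)$ into the definition \eqref{def_B1} of $\mathscr B_1$ and to rewrite each resulting term as either a piece of the exact optimality system or a projection-error remainder, using three devices repeatedly: (i) adding and subtracting the exact solution, (ii) the defining orthogonality of the $L^2$ projections \eqref{def_L2}, and (iii) element-wise integration by parts. Since $\mathscr B_1$ is a sum of terms each linear in exactly one of the test functions $\bm r_1$, $w_1$, $\mu_1$, I would organize the bookkeeping by grouping the contributions according to these three, and at the end match every remainder against the stated right-hand side. The identity is purely algebraic, so no inequalities (in particular not the improved trace bound of \Cref{improved_trace_inequality}) are needed here; those enter only in the subsequent estimates.

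For the $\bm r_1$-group I first note that $\nabla\cdot\bm r_1\in\mathcal P^{k-1}(K)\subset\mathcal P^{k+1}(K)$, so \eqref{def_L2} lets me replace $\Pi_h^{k+1}y$ by $y$ in $(\Pi_h^{k+1}y,\nabla\cdot\bm r_1)_{\mathcal T_h}$; adding and subtracting $\bm q$ and $y$ peels off the remainders $-\varepsilon^{-1}(\bm q-\bm\Pi_h^0\bm q,\bm r_1)_{\mathcal T_h}$ and $\langle\mathcal I_h^{k+1}y-y,\bm r_1\cdot\bm n\rangle_{\partial\mathcal T_h\backslash\mathcal E_h^\partial}$. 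Integrating $-(y,\nabla\cdot\bm r_1)_{\mathcal T_h}$ by parts and invoking the strong-form relation $\varepsilon^{-1}\bm q+\nabla y=0$ (which follows from \eqref{mixed_a} under the regularity \eqref{eqn:regularity2}) annihilates the volume contribution and leaves $-\langle y,\bm r_1\cdot\bm n\rangle_{\mathcal E_h^\partial}$; the Dirichlet condition $y=u$ on $\Gamma$ converts this into $-\langle u,\bm r_1\cdot\bm n\rangle_{\mathcal E_h^\partial}$, the $\bm r_1$-part of the claim. The single $\mu_1$-term is handled identically: after adding and subtracting $\bm q$, the exact piece $\langle\bm q\cdot\bm n,\mu_1\rangle_{\partial\mathcal T_h\backslash\mathcal E_h^\partial}$ vanishes because $\bm q\in H(\mathrm{div},\Omega)$ has a single-valued normal trace across interior faces while $\mu_1\in M_h(o)$ is single-valued there, so the two contributions of each interior face cancel, leaving only the remainders $-\langle(\bm\Pi_h^0\bm q-\bm q)\cdot\bm n,\mu_1\rangle_{\partial\mathcal T_h\backslash\mathcal E_h^\partial}$ and $-\langle(h^{-1}+\tau_1)(\Pi_h^{k+1}y-\mathcal I_h^{k+1}y),\mu_1\rangle_{\partial\mathcal T_h\backslash\mathcal E_h^\partial}$.

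The $w_1$-group is the bulk of the work. I would integrate $(\nabla\cdot\bm\Pi_h^0\bm q,w_1)_{\mathcal T_h}$ by parts and, after adding and subtracting $\bm q$, extract $-(\bm\Pi_h^0\bm q-\bm q,\nabla w_1)_{\mathcal T_h}$ and $\langle(\bm\Pi_h^0\bm q-\bm q)\cdot\bm n,w_1\rangle_{\partial\mathcal T_h}$ (the latter split into its interior and boundary parts). Writing $\Pi_h^{k+1}y=y-(y-\Pi_h^{k+1}y)$ and $\mathcal I_h^{k+1}y=y+(\mathcal I_h^{k+1}y-y)$ in the convection and stabilization terms produces the remaining projection errors, including the $(h^{-1}+\tau_1)$-remainders on $\partial\mathcal T_h\backslash\mathcal E_h^\partial$ and on $\mathcal E_h^\partial$ and the term $\langle\bm\beta\cdot\bm n(\mathcal I_h^{k+1}y-y),w_1\rangle_{\partial\mathcal T_h\backslash\mathcal E_h^\partial}$. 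Collecting the leftover exact pieces yields $(\nabla\cdot\bm q,w_1)_{\mathcal T_h}-(\bm\beta y,\nabla w_1)_{\mathcal T_h}-(\nabla\cdot\bm\beta\,y,w_1)_{\mathcal T_h}+\langle\bm\beta\cdot\bm n\,y,w_1\rangle_{\partial\mathcal T_h\backslash\mathcal E_h^\partial}+\langle(h^{-1}+\tau_1)u,w_1\rangle_{\mathcal E_h^\partial}$; integrating the convection volume term by parts and using the state equation $\nabla\cdot(\bm q+\bm\beta y)-y\nabla\cdot\bm\beta=f$ from \eqref{mixed_b} collapses the volume part to $(f,w_1)_{\mathcal T_h}$, while the surviving boundary contributions combine, via $y=u$ on $\Gamma$, into $\langle u,(h^{-1}+\tau_1-\bm\beta\cdot\bm n)w_1\rangle_{\mathcal E_h^\partial}$. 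Matching every remainder term by term with \Cref{Pro_B1} completes the proof.

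The main obstacle is not any single estimate but the disciplined accounting of the interior/boundary face decompositions and the outward-normal sign conventions, since each boundary integral must be correctly distributed over $\partial\mathcal T_h\backslash\mathcal E_h^\partial$ and $\mathcal E_h^\partial$. The one genuinely structural point is the vanishing of $\langle\bm q\cdot\bm n,\mu_1\rangle_{\partial\mathcal T_h\backslash\mathcal E_h^\partial}$ together with the analogous interior cancellations: it rests on the single-valued normal trace of $\bm q\in H(\mathrm{div},\Omega)$ paired with the continuity of $\mu_1\in M_h(o)$, and it is precisely this cancellation that allows the low regularity $\bm q\in[H^{r_{\bm q}}(\Omega)]^d$ with only $r_{\bm q}>0$ to enter the identity solely through the projection-error remainder $(\bm\Pi_h^0\bm q-\bm q)$ rather than through an ill-defined full trace $\|\bm q\cdot\bm n\|_{\partial\mathcal T_h}$.
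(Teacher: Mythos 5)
Your proposal is correct and follows essentially the same route as the paper: substitute the projections into the definition \eqref{def_B1}, invoke the $L^2$-orthogonality \eqref{def_L2}, add and subtract the exact solution, and eliminate the exact-solution pieces using the element-wise weak form of \eqref{mixed_a}--\eqref{mixed_b} together with $y=u$ on $\Gamma$ and the single-valuedness of $\bm q\cdot\bm n$ against $\mu_1\in M_h(o)$ (the paper states these relations as three displayed identities rather than re-deriving them from the strong form, but the computation is the same). Your identification of the interior-face cancellation $\langle \bm q\cdot\bm n,\mu_1\rangle_{\partial\mathcal T_h\backslash\mathcal E_h^\partial}=0$ as the structural point that keeps the low-regularity flux out of the identity matches exactly how the paper uses it.
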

\begin{proof}
	Using the definition of $ \mathscr B_1 $ in \eqref{def_B1} gives
	\begin{align*}
	\hspace{1em}&\hspace{-1em}  \mathscr B_1 (\bm \Pi_h^0 {\bm q},\Pi_h^{k+1} { y}, \mathcal I_h^{k+1} y, \bm r_1, w_1, \mu_1)\\
	&= \varepsilon^{-1}(\bm \Pi_h^0 {\bm q}, \bm{r}_1)_{{\mathcal{T}_h}}- (\Pi_h^{k+1} { y}, \nabla\cdot \bm{r}_1)_{{\mathcal{T}_h}}+\langle \mathcal I_h^{k+1}  y, \bm{r}_1\cdot \bm{n} \rangle_{\partial{{\mathcal{T}_h}}\backslash {\mathcal E_h^{\partial}}}\\
	&  \quad+(\nabla\cdot\bm \Pi_h^0 {\bm q},w_1)_{\mathcal{T}_h}- ( \bm{\beta} \Pi_h^{k+1} y, \nabla w_1)_{{\mathcal{T}_h}}- (\nabla\cdot\bm{\beta} \Pi_h^{k+1} y,  w_1)_{{\mathcal{T}_h}}\\
	&\quad+\langle (h^{-1} + \tau_1) \Pi_h^{k+1} { y}, w_1 \rangle_{\partial{{\mathcal{T}_h}}}
	- (h^{-1}+\tau_1-\bm \beta\cdot\bm n) \mathcal I_h^{k+1}  y, w_1 \rangle_{\partial{{\mathcal{T}_h}}\backslash \mathcal E_h^{\partial}}\\
	&\quad	-\langle  \bm \Pi_h^0 \bm q\cdot\bm n+(h^{-1}+\tau_1)(\Pi_h^{k+1} y - \mathcal I_h^{k+1} y),\mu_1\rangle_{\partial\mathcal T_h\backslash\mathcal E_h^{\partial}}.
	\end{align*}
	Using properties of the $ L^2 $ projections \eqref{def_L2} gives
	\begin{align*}
	\hspace{1em}&\hspace{-1em} \mathscr B_1 (\bm \Pi_h^0 {\bm q},\Pi_h^{k+1} { y}, \mathcal I_h^{k+1} y, \bm r_1, w_1, \mu_1) \\
	&= \varepsilon^{-1}( {\bm q}, \bm{r}_1)_{{\mathcal{T}_h}}- ({ y}, \nabla\cdot \bm r_1)_{{\mathcal{T}_h}}+\langle   y, \bm r_1\cdot \bm{n} \rangle_{\partial{{\mathcal{T}_h}}\backslash {\mathcal E_h^{\partial}}} \\
	&\quad -\varepsilon^{-1}( {\bm q} - \bm \Pi_h^0 \bm q, \bm{r}_1)_{{\mathcal{T}_h}} +\langle  \mathcal I_h^{k+1} y - y, \bm r_1\cdot \bm{n} \rangle_{\partial{{\mathcal{T}_h}}\backslash {\mathcal E_h^{\partial}}}\\
	& \quad   +(\nabla\cdot\bm q,w_1)_{\mathcal{T}_h} + (\nabla\cdot (\bm \Pi_h^0 \bm q - \bm q),w_1)_{\mathcal{T}_h}- ( \bm \beta y, \nabla w_1)_{{\mathcal{T}_h}} \\
	&\quad +  (  \bm \beta (y - \Pi_h^{k+1} y),  \nabla w_1)_{{\mathcal{T}_h}} - (\nabla\cdot\bm{\beta} y, w_1)_{\mathcal T_h} + (\nabla\cdot \bm \beta ( y - \Pi_h^{k+1} y), w_1)_{\mathcal T_h} \\
	&\quad +\langle (h^{-1}+\tau_1)(\Pi_h^{k+1} y - y), w_1 \rangle_{\partial{{\mathcal{T}_h}}} - \langle (h^{-1}+\tau_1) (\mathcal I_h^{k+1}  y - y), w_1 \rangle_{\partial{{\mathcal{T}_h}}\backslash \mathcal E_h^{\partial}} \\
	& \quad  +  \langle (h^{-1}+\tau_1) y, w_1 \rangle_{\mathcal E_h^{\partial}} 
	+\langle \bm{\beta}\cdot\bm n y, w_1 \rangle_{\partial\mathcal T_h\backslash\mathcal E_h^\partial} + \langle \bm{\beta}\cdot\bm n (\mathcal I_h^{k+1} y - y), w_1 \rangle_{\partial\mathcal T_h\backslash\mathcal E_h^\partial}\\
	&\quad -\langle  \bm \Pi^0 \bm q\cdot\bm n+(h^{-1}+\tau_1)(\Pi_h^{k+1} y - \mathcal I_h^{k+1} y),\mu_1\rangle_{\partial\mathcal T_h\backslash\mathcal E_h^{\partial}}.
	\end{align*}
	The flux $\bm{q}$ and  state $ y $ satisfy
	\begin{align*}
	\varepsilon^{-1}(\bm{q},\bm{r}_1)_{\mathcal{T}_h}-(y,\nabla\cdot \bm{r}_1)_{\mathcal{T}_h}+\left\langle{y},\bm r_1\cdot \bm n \right\rangle_{\partial {\mathcal{T}_h}\backslash\mathcal E_h^\partial} &= -\langle u,\bm r_1\cdot \bm n \rangle_{\mathcal E_h^\partial},\\
	(\nabla\cdot \bm{q},w_1)_{\mathcal{T}_h}-(\bm{\beta} y,\nabla w_1)_{\mathcal{T}_h}-(\nabla \cdot \bm{\beta} y, w_1)_{\mathcal{T}_h}\\
	+\left\langle \bm \beta\cdot \bm n y,w_1\right\rangle_{\partial {\mathcal{T}_h}\backslash \mathcal E_h^\partial} &= -\langle \bm \beta \cdot \bm n u,w_1 \rangle_{\mathcal E_h^\partial}+(f,w_1)_{\mathcal{T}_h},\\
	\langle \bm q \cdot \bm n, \mu_1\rangle_{\partial {\mathcal{T}_h}\backslash \mathcal E_h^\partial} &= 0,
	\end{align*}
	for all $(\bm{r}_1,w_1,\mu_1)\in\bm{V}_h\times W_h\times M_h(o)$. This gives
	\begin{align*}
	\hspace{1em}&\hspace{-1em}  \mathscr B_1 (\bm \Pi_h^0 {\bm q},\Pi_h^{k+1} { y}, \mathcal I_h^{k+1}  y, \bm r_1, w_1, \mu_1)\\
	&=\langle u,(h^{-1} +\tau_1-\bm \beta \cdot \bm n ) w_1-\bm r_1\cdot \bm n  \rangle_{\mathcal E_h^{\partial}} + (f,w_1)_{\mathcal T_h} -\varepsilon^{-1}( {\bm q} - \bm \Pi_h^0 \bm q, \bm{r}_1)_{{\mathcal{T}_h}}\\
	&\quad  +\langle  \mathcal I_h^{k+1} y - y, \bm r_1\cdot \bm{n} \rangle_{\partial{{\mathcal{T}_h}}\backslash {\mathcal E_h^{\partial}}} + (\nabla\cdot (\bm \Pi_h^0 \bm q - \bm q),w_1)_{\mathcal{T}_h} \\
	&\quad +  (\bm \beta (y - \Pi_h^{k+1} y),  \nabla w_1)_{{\mathcal{T}_h}}  + (\nabla\cdot \bm \beta ( y - \Pi_h^{k+1} y), w_1)_{\mathcal T_h}\\
	&\quad +\langle (h^{-1}+\tau_1)(\Pi_h^{k+1} y -\mathcal I_h^{k+1}  y), w_1 - \mu_1 \rangle_{\partial{{\mathcal{T}_h}}\backslash\mathcal E_h^\partial}  \\
	&\quad +\langle (h^{-1}+\tau_1)(\Pi_h^{k+1} y - y), w_1 \rangle_{\mathcal E_h^\partial} + \langle \bm{\beta}\cdot\bm n (\mathcal I_h^{k+1}  y-y), w_1 \rangle_{\partial{{\mathcal{T}_h}}\backslash \mathcal E_h^{\partial}}\\
	&\quad -\langle ( \bm \Pi_h^0 \bm q - \bm q)\cdot\bm n,\mu_1\rangle_{\partial\mathcal T_h\backslash\mathcal E_h^{\partial}}\\
	&=\langle u,(h^{-1} +\tau_1-\bm \beta \cdot \bm n ) w_1-\bm r_1\cdot \bm n  \rangle_{\mathcal E_h^{\partial}} + (f,w_1)_{\mathcal T_h}-\varepsilon^{-1}( {\bm q} - \bm \Pi_h^0 \bm q, \bm{r}_1)_{{\mathcal{T}_h}}\\
	&\quad  +\langle  \mathcal I_h^{k+1} y - y, \bm r_1\cdot \bm{n} \rangle_{\partial{{\mathcal{T}_h}}\backslash {\mathcal E_h^{\partial}}} - (\bm \Pi_h^0 \bm q - \bm q,\nabla w_1)_{\mathcal{T}_h}\\
	&\quad +  (  \bm \beta (y - \Pi_h^{k+1} y),  \nabla w_1)_{{\mathcal{T}_h}}  + (\nabla\cdot \bm \beta ( y - \Pi_h^{k+1} y), w_1)_{\mathcal T_h}\\
	&\quad +\langle (h^{-1}+\tau_1)(\Pi_h^{k+1} y -\mathcal I_h^{k+1}  y), w_1 - \mu_1 \rangle_{\partial{{\mathcal{T}_h}}\backslash\mathcal E_h^\partial}   \\
	&\quad +\langle (h^{-1}+\tau_1)(\Pi_h^{k+1} y - y), w_1 \rangle_{\mathcal E_h^\partial}\\
	&\quad  + \langle \bm{\beta}\cdot\bm n (\mathcal I_h^{k+1}  y-y), w_1-\mu_1 \rangle_{\partial{{\mathcal{T}_h}}\backslash \mathcal E_h^{\partial}} +\langle  (\bm \Pi_h^0 \bm q - \bm q)\cdot\bm n, w_1 - \mu_1\rangle_{\partial\mathcal T_h\backslash\mathcal E_h^\partial}\\
	&\quad  +\langle  (\bm \Pi_h^0 \bm q - \bm q)\cdot\bm n, w_1\rangle_{\mathcal  E_h^\partial}.
	\end{align*}
\end{proof}

\begin{lemma} \label{Pro_B2}
	For all $(\bm r_2, w_2,\mu_2)\in \bm V_h\times W_h\times M_h(o)$, we have
	\begin{align*}
	\hspace{1em}&\hspace{-1em}  \mathscr B_2 (\bm \Pi_h^k {\bm p},\Pi_h^{k+1} {z}, \mathcal I_h^{k+1}  z, \bm r_2, w_2, \mu_2)\\
	&= (y - y_d,w_2)_{\mathcal T_h} +\langle  \mathcal I_h^{k+1} z - z, \bm r_2\cdot \bm{n} \rangle_{\partial{{\mathcal{T}_h}}\backslash {\mathcal E_h^{\partial}}}  \\
	& \quad   -  (  \bm \beta (z - \Pi_h^{k+1} z),  \nabla w_2)_{{\mathcal{T}_h}}  +\langle (h^{-1}+\tau_2)(\Pi_h^{k+1} z -\mathcal I_h^{k+1}  z), w_2 - \mu_2 \rangle_{\partial{{\mathcal{T}_h}}\backslash\mathcal E_h^\partial}  \\
	&\quad  +\langle (h^{-1}+\tau_2)(\Pi_h^{k+1} z - z), w_2-\mu_2 \rangle_{\mathcal E_h^\partial}- \langle \bm{\beta}\cdot\bm n (\mathcal I_h^{k+1}  z-z), w_2 \rangle_{\partial{{\mathcal{T}_h}}\backslash \mathcal E_h^{\partial}} \\
	& \quad  +\langle  (\bm \Pi_h^k \bm p - \bm p) \cdot\bm n, w_2 - \mu_2\rangle_{\partial\mathcal T_h\backslash\mathcal E_h^\partial}  +\langle  (\bm \Pi_h^k \bm p - \bm p) \cdot\bm n, w_2\rangle_{\mathcal E_h^\partial}.
	\end{align*}
\end{lemma}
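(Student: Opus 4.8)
The plan is to mirror, almost line for line, the proof of \Cref{Pro_B1}, replacing the state data $(\bm q, y, u)$ and the operator $\mathscr B_1$ by the adjoint data $(\bm p, z)$ and the operator $\mathscr B_2$. First I would expand $\mathscr B_2(\bm\Pi_h^k\bm p, \Pi_h^{k+1} z, \mathcal I_h^{k+1} z; \bm r_2, w_2, \mu_2)$ using the definition \eqref{def_B2}, writing out each of the four groups of terms with the projected arguments inserted, and then integrate the volume term $(\nabla\cdot\bm\Pi_h^k\bm p,w_2)_{\mathcal T_h}$ by parts elementwise to produce $-(\bm\Pi_h^k\bm p,\nabla w_2)_{\mathcal T_h}+\langle\bm\Pi_h^k\bm p\cdot\bm n,w_2\rangle_{\partial\mathcal T_h}$.

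Next I would convert the projected quantities into exact quantities plus projection errors using the orthogonality relations \eqref{def_L2}. The key simplification relative to \Cref{Pro_B1} is that the flux $\bm p$ is approximated by the full-order projection $\bm\Pi_h^k$ rather than $\bm\Pi_h^0$: since $\bm r_2\in[\mathcal P^k(K)]^d$ and $\nabla w_2\in[\mathcal P^k(K)]^d$, the identities $(\bm\Pi_h^k\bm p,\bm r_2)_{\mathcal T_h}=(\bm p,\bm r_2)_{\mathcal T_h}$ and $(\bm\Pi_h^k\bm p,\nabla w_2)_{\mathcal T_h}=(\bm p,\nabla w_2)_{\mathcal T_h}$ hold exactly, so no volume flux-error term is generated (this is why the term $\varepsilon^{-1}(\bm q-\bm\Pi_h^0\bm q,\bm r_1)_{\mathcal T_h}$ present in \Cref{Pro_B1} has no analogue here). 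Likewise $(\Pi_h^{k+1}z,\nabla\cdot\bm r_2)_{\mathcal T_h}=(z,\nabla\cdot\bm r_2)_{\mathcal T_h}$, so the only surviving errors come from the interior traces $\mathcal I_h^{k+1}z-z$, the trace terms $\Pi_h^{k+1}z-z$, the volume term $(\bm\beta(z-\Pi_h^{k+1}z),\nabla w_2)_{\mathcal T_h}$, and the boundary flux errors $\langle(\bm\Pi_h^k\bm p-\bm p)\cdot\bm n,\cdot\rangle$.

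I would then substitute the continuous adjoint equations, which in broken form read
\begin{align*}
\varepsilon^{-1}(\bm p,\bm r_2)_{\mathcal T_h}-(z,\nabla\cdot\bm r_2)_{\mathcal T_h}+\langle z,\bm r_2\cdot\bm n\rangle_{\partial\mathcal T_h\backslash\mathcal E_h^\partial}&=0,\\
(\nabla\cdot\bm p,w_2)_{\mathcal T_h}+(\bm\beta z,\nabla w_2)_{\mathcal T_h}-\langle\bm\beta\cdot\bm n\, z,w_2\rangle_{\partial\mathcal T_h\backslash\mathcal E_h^\partial}&=(y-y_d,w_2)_{\mathcal T_h},\\
\langle\bm p\cdot\bm n,\mu_2\rangle_{\partial\mathcal T_h\backslash\mathcal E_h^\partial}&=0,
\end{align*}
which follow from \eqref{mixed_c}, \eqref{mixed_d}, the continuity of $z$ and of $\bm p\cdot\bm n$ across interior faces (using $r_{\bm p}>1$ and $\bm p\in H(\mathrm{div},\Omega)$), and the homogeneous Dirichlet condition $z=0$ on $\Gamma$. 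Subtracting these identities from the expanded expression cancels all exact-solution contributions and leaves precisely the projection-error terms in the statement; the boundary condition $z=0$ is what removes any control contribution on $\mathcal E_h^\partial$ and collapses the boundary traces into $\langle(\bm\Pi_h^k\bm p-\bm p)\cdot\bm n,w_2\rangle_{\mathcal E_h^\partial}$ and $\langle(h^{-1}+\tau_2)(\Pi_h^{k+1}z-z),w_2-\mu_2\rangle_{\mathcal E_h^\partial}$.

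The computation is purely algebraic, so there is no deep obstacle; the hard part will be the sign and domain-of-integration bookkeeping. In particular I must track the sign discrepancy between the adjoint convection $-\nabla\cdot(\bm\beta z)$ and the $+(\bm\beta z_h,\nabla w_2)$ appearing in \eqref{def_B2}, and I must carefully separate each $\partial\mathcal T_h$ integral into its interior part $\partial\mathcal T_h\backslash\mathcal E_h^\partial$ and its boundary part $\mathcal E_h^\partial$ \emph{before} invoking $z=0$, so that the surviving boundary contributions are grouped correctly into the final expression.
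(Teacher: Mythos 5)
Your proposal is correct and follows exactly the route the paper intends: the paper's own proof of \Cref{Pro_B2} is simply the statement that it ``proceeds in the same way as the proof of'' \Cref{Pro_B1}, i.e., expand $\mathscr B_2$ via \eqref{def_B2}, use the $L^2$-orthogonality \eqref{def_L2}, substitute the broken adjoint equations derived from \eqref{mixed_c}--\eqref{mixed_d} together with $z=0$ on $\Gamma$ and the single-valuedness of $\bm p\cdot\bm n$, and collect the projection-error terms. Your observation that the full-order projection $\bm\Pi_h^k$ kills the volume flux-error terms (unlike $\bm\Pi_h^0$ in \Cref{Pro_B1}) is exactly why the analogue of $\varepsilon^{-1}(\bm q-\bm\Pi_h^0\bm q,\bm r_1)_{\mathcal T_h}$ is absent from the stated identity, and the bookkeeping you describe reproduces it term by term.
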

The proof proceeds in the same way as the proof of the above lemma.

Subtracting part 1 of the auxiliary problem \eqref{IEDG_u_a} from the equality in \Cref{Pro_B1} gives the following result:
\begin{lemma} \label{step1:error_equation}
	For $\varepsilon^{\bm q}_h={\bm\Pi}_h^0 \bm q-\bm q_h(u)$, $\varepsilon^{y}_h=\Pi_h^{k+1} y-y_h(u)$,  $\varepsilon^{\widehat y}_h=\mathcal I_h^{k+1} y- \widehat y_h^o(u)$,  we have
	\begin{align*}
	\hspace{1em}&\hspace{-1em}  \mathscr B_1 (\varepsilon_h^{\bm q},\varepsilon_h^{y},\varepsilon_h^{\widehat y}, \bm r_1, w_1, \mu_1)\\
	&= -\varepsilon^{-1}( {\bm q} - \bm \Pi_h^0 \bm q, \bm{r}_1)_{{\mathcal{T}_h}} +\langle  \mathcal I_h^{k+1} y - y, \bm r_1\cdot \bm{n} \rangle_{\partial{{\mathcal{T}_h}}\backslash {\mathcal E_h^{\partial}}} - (\bm \Pi_h^0 \bm q - \bm q,\nabla w_1)_{\mathcal{T}_h}\\
	&\quad +  (  \bm \beta (y - \Pi_h^{k+1} y),  \nabla w_1)_{{\mathcal{T}_h}}  + (\nabla\cdot \bm \beta ( y - \Pi_h^{k+1} y), w_1)_{\mathcal T_h}\\
	&\quad +\langle (h^{-1}+\tau_1)(\Pi_h^{k+1} y -\mathcal I_h^{k+1}  y), w_1 - \mu_1 \rangle_{\partial{{\mathcal{T}_h}}\backslash\mathcal E_h^\partial}   \\
	&\quad +\langle (h^{-1}+\tau_1)(\Pi_h^{k+1} y - y), w_1-\mu_1 \rangle_{\mathcal E_h^\partial}\\
	&\quad  + \langle \bm{\beta}\cdot\bm n (\mathcal I_h^{k+1}  y-y), w_1-\mu_1 \rangle_{\partial{{\mathcal{T}_h}}\backslash \mathcal E_h^{\partial}} +\langle  (\bm \Pi_h^0 \bm q - \bm q)\cdot\bm n, w_1-\mu_1 \rangle_{\partial\mathcal T_h\backslash \mathcal E_h^{\partial}}\\
	&\quad +\langle  (\bm \Pi_h^0 \bm q - \bm q)\cdot\bm n, w_1\rangle_{\mathcal E_h^{\partial}}
	\end{align*}
	for all $(\bm r_1, w_1,\mu_1)\in \bm V_h\times W_h\times M_h(o)$.
\end{lemma}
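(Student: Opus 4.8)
The plan is to exploit the linearity of $\mathscr B_1$ in its first three (trial) arguments, combined with the two identities already in hand: the explicit evaluation in \Cref{Pro_B1} and the defining equation \eqref{IEDG_u_a} of the auxiliary problem. No new analytical ingredient is needed; the substantive work has already been carried out in the proof of \Cref{Pro_B1}.

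First I would invoke linearity to split the left-hand side as
\begin{align*}
\mathscr B_1(\varepsilon_h^{\bm q}, \varepsilon_h^{y}, \varepsilon_h^{\widehat y}, \bm r_1, w_1, \mu_1)
&= \mathscr B_1(\bm\Pi_h^0\bm q, \Pi_h^{k+1}y, \mathcal I_h^{k+1}y, \bm r_1, w_1, \mu_1) \\
&\quad - \mathscr B_1(\bm q_h(u), y_h(u), \widehat y_h^o(u), \bm r_1, w_1, \mu_1).
\end{align*}
This is immediate from the definition \eqref{def_B1}: every term there is linear in the triple $(\bm q_h, y_h, \widehat y_h^o)$ for fixed test functions $(\bm r_1, w_1, \mu_1)$, and by definition $\varepsilon_h^{\bm q} = \bm\Pi_h^0\bm q - \bm q_h(u)$, $\varepsilon_h^{y} = \Pi_h^{k+1}y - y_h(u)$, $\varepsilon_h^{\widehat y} = \mathcal I_h^{k+1}y - \widehat y_h^o(u)$.

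Next I would substitute the right-hand side of \Cref{Pro_B1} for the first term and the right-hand side of \eqref{IEDG_u_a} for the second. Both expressions carry the identical control contribution $\langle u, (h^{-1}+\tau_1-\bm\beta\cdot\bm n)w_1 - \bm r_1\cdot\bm n\rangle_{\mathcal E_h^\partial}$ and the identical forcing term $(f, w_1)_{\mathcal T_h}$; indeed, the sign convention in \eqref{IEDG_u_a} is arranged precisely so that $-\langle u, \bm r_1\cdot\bm n - (h^{-1}+\tau_1-\bm\beta\cdot\bm n)w_1\rangle_{\mathcal E_h^\partial}$ matches the control term in \Cref{Pro_B1}. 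Upon subtraction these cancel exactly, and what survives is exactly the list of projection-error terms displayed in \Cref{Pro_B1}, which is the asserted identity.

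The only point requiring a word of justification---and it is entirely routine---is the boundary term on $\mathcal E_h^\partial$: \Cref{Pro_B1} produces $\langle(h^{-1}+\tau_1)(\Pi_h^{k+1}y - y), w_1\rangle_{\mathcal E_h^\partial}$, whereas the conclusion records it as $\langle(h^{-1}+\tau_1)(\Pi_h^{k+1}y - y), w_1 - \mu_1\rangle_{\mathcal E_h^\partial}$. These coincide because $\mu_1 \in M_h(o)$ is supported only on interior edges, so $\mu_1|_{\mathcal E_h^\partial} = 0$; the $w_1 - \mu_1$ form is written merely to parallel the $w_1 - \mu_1$ pattern of the neighboring interior-face terms, which is convenient for the estimates in the later steps. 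Thus there is no real obstacle in this lemma: it is pure linearity followed by cancellation, with all integration by parts, insertion of the $L^2$-projection properties \eqref{def_L2}, and use of the continuous optimality equations already absorbed into \Cref{Pro_B1}.
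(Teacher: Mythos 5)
Your proposal is correct and matches the paper's own argument, which is exactly the one-line observation that subtracting the auxiliary equation \eqref{IEDG_u_a} from the identity in \Cref{Pro_B1} cancels the common control and forcing terms by linearity of $\mathscr B_1$ in its first three arguments. Your remark that the boundary term can be written with $w_1-\mu_1$ because $\mu_1\in M_h(o)$ vanishes on (indeed, is not supported on) $\mathcal E_h^\partial$ correctly accounts for the only cosmetic discrepancy between the two right-hand sides.
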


\subsubsection{Step 2: Estimates for $\varepsilon_h^{ q}$.}
\label{subsec:proof_step2}

\begin{lemma} \label{energy_norm_q}
	For $(\varepsilon_h^{\bm q}, \varepsilon_h^y, \varepsilon_h^{\widehat{y}})$ defined in \Cref{step1:error_equation}, we have
	\begin{align*}
	\hspace{1em}&\hspace{-1em}\|\varepsilon_h^{\bm q}\|_{\mathcal{T}_h}+h^{-\frac{1}{2}}\|\varepsilon_h^y-\varepsilon_h^{\widehat{y}}\|_{\partial \mathcal{T}_h\backslash\mathcal E_h^\partial} +h^{-\frac{1}{2}}\|\varepsilon_h^y\|_{\mathcal E_h^\partial} \\
	&\le  C\|\bm q - \bm \Pi_h^0 \bm q\|_{\mathcal T_h} + Ch \|\nabla \cdot \bm q\|_{\mathcal T_h}\\
	&\quad + Ch^{-1/2} (\|\Pi_h^{k+1} y - y\|_{\partial \mathcal T_h} + \|\mathcal I_h^{k+1} y - y\|_{\partial \mathcal T_h}).
	\end{align*}
\end{lemma}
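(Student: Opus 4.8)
The plan is to test the error equation of \Cref{step1:error_equation} with the errors themselves, $(\bm r_1,w_1,\mu_1)=(\varepsilon_h^{\bm q},\varepsilon_h^y,\varepsilon_h^{\widehat y})$, and to read off the three left-hand quantities from the diagonal form $\mathscr{B}_1(\varepsilon_h^{\bm q},\varepsilon_h^y,\varepsilon_h^{\widehat y};\varepsilon_h^{\bm q},\varepsilon_h^y,\varepsilon_h^{\widehat y})$. By \Cref{property_B} this equals $\varepsilon^{-1}\norm{\varepsilon_h^{\bm q}}_{\mathcal T_h}^2$ plus boundary contributions weighted by $h^{-1}+\tau_1-\tfrac12\bm\beta\cdot\bm n$ on $\partial\mathcal T_h\backslash\mathcal E_h^\partial$ and on $\mathcal E_h^\partial$, minus $\tfrac12(\nabla\cdot\bm\beta\,\varepsilon_h^y,\varepsilon_h^y)_{\mathcal T_h}$. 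Since $\nabla\cdot\bm\beta\le0$ by \eqref{eqn:beta_assumptions1} the volume term is nonnegative, and since $\bm\beta\in[L^\infty(\Omega)]^d$ and $\tau_1\ge0$ the weight satisfies $h^{-1}+\tau_1-\tfrac12\bm\beta\cdot\bm n\ge\tfrac12h^{-1}$ once $h\le1/\norm{\bm\beta}_\infty$; hence the diagonal term is bounded below by a constant times $\norm{\varepsilon_h^{\bm q}}_{\mathcal T_h}^2+h^{-1}\norm{\varepsilon_h^y-\varepsilon_h^{\widehat y}}_{\partial\mathcal T_h\backslash\mathcal E_h^\partial}^2+h^{-1}\norm{\varepsilon_h^y}_{\mathcal E_h^\partial}^2$, i.e.\ the square of the claimed left-hand side. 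It then suffices to bound the right-hand side of \Cref{step1:error_equation} by this energy norm times the stated data quantities and to absorb.

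Most terms are routine. The volume flux term is bounded by Cauchy--Schwarz; the terms pairing a projection error of $y$ against the polynomial trace $\varepsilon_h^{\bm q}\cdot\bm n$ are bounded by Cauchy--Schwarz and the discrete trace inequality $\norm{\varepsilon_h^{\bm q}}_{\partial\mathcal T_h}\le Ch^{-1/2}\norm{\varepsilon_h^{\bm q}}_{\mathcal T_h}$ of \eqref{classical_ine}; and the ``jump'' terms carrying $(h^{-1}+\tau_1)(\Pi_h^{k+1}y-\mathcal I_h^{k+1}y)$, $(h^{-1}+\tau_1)(\Pi_h^{k+1}y-y)$ and $\bm\beta\cdot\bm n(\mathcal I_h^{k+1}y-y)$ pair directly with the energy-norm quantities $h^{-1/2}\norm{\varepsilon_h^y-\varepsilon_h^{\widehat y}}_{\partial\mathcal T_h\backslash\mathcal E_h^\partial}$ and $h^{-1/2}\norm{\varepsilon_h^y}_{\mathcal E_h^\partial}$, yielding the stated boundary terms $h^{-1/2}(\norm{\Pi_h^{k+1}y-y}_{\partial\mathcal T_h}+\norm{\mathcal I_h^{k+1}y-y}_{\partial\mathcal T_h})$ after a triangle inequality. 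The one delicate family is $\langle(\bm\Pi_h^0\bm q-\bm q)\cdot\bm n,\varepsilon_h^y-\varepsilon_h^{\widehat y}\rangle_{\partial\mathcal T_h\backslash\mathcal E_h^\partial}$ and $\langle(\bm\Pi_h^0\bm q-\bm q)\cdot\bm n,\varepsilon_h^y\rangle_{\mathcal E_h^\partial}$: since $r_{\bm q}$ may be arbitrarily close to $0$, the naive trace $\norm{\bm q}_{\partial\mathcal T_h}$ is not even defined. Here I would apply the improved trace inequality (\Cref{improved_trace_inequality}) to $\bm q-\bm\Pi_h^0\bm q$ elementwise (using $\nabla\cdot\bm\Pi_h^0\bm q=0$ on each $K$), which bounds these pairings by $Ch^{-1/2}\norm{\mu}_E(\norm{\bm q-\bm\Pi_h^0\bm q}_K+h\norm{\nabla\cdot\bm q}_K)$; this is exactly the source of the $\norm{\bm q-\bm\Pi_h^0\bm q}_{\mathcal T_h}$ and $h\norm{\nabla\cdot\bm q}_{\mathcal T_h}$ terms.

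The remaining volume terms involve $\nabla\varepsilon_h^y$ (from $-(\bm\Pi_h^0\bm q-\bm q,\nabla\varepsilon_h^y)_{\mathcal T_h}$ and the convection term $(\bm\beta(y-\Pi_h^{k+1}y),\nabla\varepsilon_h^y)_{\mathcal T_h}$) and $\varepsilon_h^y$ (from $(\nabla\cdot\bm\beta(y-\Pi_h^{k+1}y),\varepsilon_h^y)_{\mathcal T_h}$), neither of which the energy norm controls on its own. To close the argument I would add two auxiliary bounds obtained by a short bootstrap. Taking $w_1=\mu_1=0$ and $\bm r_1=\nabla\varepsilon_h^y\in\bm V_h$ in the error equation, integrating $-(\varepsilon_h^y,\nabla\cdot\bm r_1)$ by parts, and applying the discrete trace inequality to the resulting polynomial boundary terms gives $\norm{\nabla\varepsilon_h^y}_{\mathcal T_h}\le C(\text{energy norm})+C(\norm{\bm q-\bm\Pi_h^0\bm q}_{\mathcal T_h}+h^{-1/2}\norm{\mathcal I_h^{k+1}y-y}_{\partial\mathcal T_h})$; a discrete Poincar\'e--Friedrichs inequality (controlling interior jumps by $\varepsilon_h^y-\varepsilon_h^{\widehat y}$ and using the boundary control $h^{-1/2}\norm{\varepsilon_h^y}_{\mathcal E_h^\partial}$) then gives $\norm{\varepsilon_h^y}_{\mathcal T_h}\le C(\norm{\nabla\varepsilon_h^y}_{\mathcal T_h}+\text{energy norm})$. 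Both right-hand sides are (energy norm) $+$ data, so these volume terms are controlled by data times (energy norm $+$ data).

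Finally I would collect all estimates, apply Young's inequality to each product of the form (data)$\times$(energy norm), absorb the energy-norm factors into the coercive left-hand side, and take square roots. The main obstacle is the low regularity of $\bm q$: the boundary flux-consistency pairings cannot be handled by a standard trace inequality, and the improved trace inequality of \Cref{improved_trace_inequality} is precisely what replaces the undefined $\norm{\bm q}_{\partial\mathcal T_h}$ by the well-defined local quantity $\norm{\bm q-\bm\Pi_h^0\bm q}_K+h\norm{\nabla\cdot\bm q}_K$. A secondary point is that the convection volume terms generate an interior consistency error $\norm{y-\Pi_h^{k+1}y}_{\mathcal T_h}$ of order $h^{s_y}$, which is higher order than, and hence dominated by, the stated boundary term $h^{-1/2}\norm{\Pi_h^{k+1}y-y}_{\partial\mathcal T_h}$ of order $h^{s_y-1}$.
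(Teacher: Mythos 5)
Your proposal is correct and follows essentially the same route as the paper: coercivity from \Cref{property_B}, the auxiliary bound on $\norm{\nabla\varepsilon_h^y}_{\mathcal T_h}$ obtained by testing with $(\nabla\varepsilon_h^y,0,0)$, the improved trace inequality of \Cref{improved_trace_inequality} for the $(\bm\Pi_h^0\bm q-\bm q)\cdot\bm n$ pairings, and Young's inequality with absorption for the rest. The only (immaterial) deviation is that you control the term $(\nabla\cdot\bm\beta\,(y-\Pi_h^{k+1}y),\varepsilon_h^y)_{\mathcal T_h}$ via a discrete Poincar\'e inequality, whereas the paper absorbs it directly into the nonnegative coercivity contribution $-\tfrac12(\nabla\cdot\bm\beta\,\varepsilon_h^y,\varepsilon_h^y)_{\mathcal T_h}$.
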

\begin{proof}
	First, we take $(\bm r_1,w_1,\mu_1)=( \nabla \varepsilon_h^y,0,0)$ in  \Cref{step1:error_equation}, and by the definition of $\mathscr  B_1$ in \eqref{def_B1} we have 
	\begin{align}\label{nablay}
	\begin{split}
	\|\nabla \varepsilon_h^y\|_{\mathcal T_h}  &\le C  (\|\varepsilon^{\bm q}_h\|_{\mathcal T_h}+h^{-1/2}\|\varepsilon^y_h-\varepsilon^{\widehat y}_h\|_{\partial\mathcal T_h\backslash\mathcal E_h^\partial} +h^{-1/2}\|\varepsilon^y_h\|_{\mathcal E_h^\partial}) \\
	&\quad + Ch^{-1/2} \|\mathcal I_h^{k+1} y - y\|_{\partial \mathcal T_h} + C\|\bm q - \bm{\Pi}_h^0 \bm q\|_{\mathcal T_h}.
	\end{split}
	\end{align}
	
	Next,  taking $(\bm r_1,w_1,\mu_1)=( \varepsilon_h^{\bm q},\varepsilon_h^y,\varepsilon_h^{\widehat{y}})$ in  \Cref{property_B}, we get 
	\begin{align*}
	\hspace{1em}&\hspace{-1em}\mathscr B_1(\varepsilon^{\bm q}_h,\varepsilon^y_h,\varepsilon^{\widehat{y}}_h;\varepsilon^{\bm q}_h,\varepsilon^y_h,\varepsilon^{\widehat{y}}_h) \\
	&=\varepsilon^{-1}(\varepsilon^{\bm q}_h,\varepsilon^{\bm q}_h)_{\mathcal T_h}+ \langle (h^{-1}+\tau_1 - \frac 12 \bm \beta\cdot\bm n)(\varepsilon^{y}_h-\varepsilon^{\widehat{y}}_h),\varepsilon^y_h-\varepsilon^{\widehat{y}}_h\rangle_{\partial\mathcal T_h\backslash \mathcal E_h^\partial}\\
	&\quad-\frac 1 2(\nabla\cdot\bm\beta \varepsilon^y_h,\varepsilon^y_h)_{\mathcal T_h} +\langle (h^{-1}+\tau_1-\frac12\bm \beta\cdot\bm n) \varepsilon^y_h,\varepsilon^y_h\rangle_{\mathcal E_h^\partial}.
	\end{align*}
	
	On the other hand, take $(\bm r_1,w_1,\mu_1)=( \varepsilon_h^{\bm q},\varepsilon_h^y,\varepsilon_h^{\widehat{y}})$ in  \Cref{step1:error_equation} to obtain
	\begin{align*}
	\hspace{1em}&\hspace{-1em} \mathscr B_1(\varepsilon^{\bm q}_h,\varepsilon^y_h,\varepsilon^{\widehat{y}}_h;\varepsilon^{\bm q}_h,\varepsilon^y_h,\varepsilon^{\widehat{y}}_h)  \\
	&=-\varepsilon^{-1}( {\bm q} - \bm \Pi_h^0 \bm q, \varepsilon^{\bm q}_h)_{{\mathcal{T}_h}}   +\langle  \mathcal I_h^{k+1} y - y, \varepsilon^{\bm q}_h\cdot \bm{n} \rangle_{\partial{{\mathcal{T}_h}}\backslash \mathcal E_h^\partial} - (\bm \Pi_h^0 \bm q - \bm q,\nabla\varepsilon_h^y)_{\mathcal{T}_h} \\
	&\quad+(\bm \beta (y - \Pi_h^{k+1} y),  \nabla \varepsilon_h^y)_{{\mathcal{T}_h}} + (\nabla\cdot \bm \beta ( y - \Pi_h^{k+1} y), \varepsilon_h^y)_{\mathcal T_h}\\
	&\quad +\langle (h^{-1}+\tau_1)(\Pi_h^{k+1} y -\mathcal I_h^{k+1}  y), \varepsilon_h^y  - \varepsilon_h^{\widehat y}  \rangle_{\partial{{\mathcal{T}_h}}\backslash\mathcal E_h^\partial} \\
	&\quad  +\langle (h^{-1}+\tau_1)(\Pi_h^{k+1} y - y), \varepsilon_h^y \rangle_{\mathcal E_h^\partial} + \langle \bm{\beta}\cdot\bm n (\mathcal I_h^{k+1}  y-y), \varepsilon_h^y  - \varepsilon_h^{\widehat y}  \rangle_{\partial{{\mathcal{T}_h}}\backslash \mathcal E_h^{\partial}}\\
	&\quad +\langle  (\bm \Pi_h^0 \bm q - \bm q)\cdot\bm n, \varepsilon_h^y  - \varepsilon_h^{\widehat y} \rangle_{\partial{{\mathcal{T}_h}}\backslash \mathcal E_h^{\partial}} +\langle  (\bm \Pi_h^0 \bm q - \bm q)\cdot\bm n, \varepsilon_h^y\rangle_{\mathcal E_h^{\partial}}  \\
	& = T_1 + T_2 + T_3 + T_4 +T_5 + T_6 + T_7 + T_8 + T_9 + T_{10}.
	\end{align*}
	For the term $T_1$, the Cauchy-Schwarz inequality gives
	\begin{align*}
	T_1&  = -\varepsilon^{-1}( {\bm q} - \bm \Pi_h^0 \bm q, \varepsilon^{\bm q}_h)_{{\mathcal{T}_h}} \le C\|{\bm q} - \bm \Pi_h^0 \bm q\|_{{\mathcal{T}_h}}^2 + \frac{1}{16\varepsilon}\| \varepsilon_h^{\bm q}\|_{\mathcal{T}_h}^2.
	\end{align*}
	For the term $T_2$,  the Cauchy-Schwarz inequality  and an inverse inequality give
	\begin{align*}
	T_2	&= \langle  \mathcal I_h^{k+1} y - y, \varepsilon_h^{\bm q} \cdot \bm{n} \rangle_{\partial{{\mathcal{T}_h}}\backslash {\mathcal E_h^{\partial}}}\\
	& \le C h^{-\frac{1}{2}} \| \mathcal I_h^{k+1} y - y  \|_{\partial \mathcal{T}_h}\| \varepsilon_h^{\bm q}\|_{\mathcal{T}_h} \\
	&\le Ch^{-1}\| \mathcal I_h^{k+1} y - y  \|_{\partial \mathcal{T}_h}^2 + \frac{1}{16\varepsilon}\| \varepsilon_h^{\bm q}\|_{\mathcal{T}_h}^2.
	\end{align*}
	For the terms $T_3$, $T_4$ and $T_5$,  apply \eqref{nablay} and Young's inequality  to obtain
	\begin{align*}
	T_3& = - (\bm \Pi_h^0 \bm q - \bm q,\nabla\varepsilon_h^y)_{\mathcal{T}_h}\\
	&\le C \|\bm \Pi_h^0 \bm q - \bm q\|_{\mathcal T_h}^2 + \frac {1}{16\varepsilon}
	\|\varepsilon_h^{\bm{q}}\|_{\mathcal T_h}^2 + \frac 1 {16 h} \|{\varepsilon_h^y-\varepsilon_h^{\widehat{y}}}\|_{\partial \mathcal T_h\backslash\mathcal E_h^\partial}^2 +  \frac 1 {16 h} \|\varepsilon_h^y\|_{\mathcal E_h^\partial}^2,\\
	&\quad +Ch^{-1} \|\mathcal I_h^{k+1} y - y\|_{\partial \mathcal T_h}^2,\\
	T_4 &=  ( \bm \beta (y - \Pi_h^{k+1} y), \nabla \varepsilon_h^y)_{{\mathcal{T}_h}} \\
	&\le C \|y - \Pi_h^{k+1} y\|_{\mathcal T_h}^2 + \frac {1}{16\varepsilon}
	\|\varepsilon_h^{\bm{q}}\|_{\mathcal T_h}^2  + \frac 1 {16 h} \|{\varepsilon_h^y-\varepsilon_h^{\widehat{y}}}\|_{\partial \mathcal T_h\backslash\mathcal E_h^\partial}^2 +  \frac 1 {16 h} \|\varepsilon_h^y\|_{\mathcal E_h^\partial}^2,\\
	&\quad +Ch^{-1} \|\mathcal I_h^{k+1} y - y\|_{\partial \mathcal T_h}^2,\\
	T_5 &= (\nabla\cdot\bm{\beta}(y - \Pi_h^{k+1} y),\varepsilon_h^y)_{\mathcal T_h} \le C \|y - \Pi _h^{k+1}y\|_{\mathcal T_h}^2 +\norm{\sqrt{- \frac 1 2 \nabla\cdot\bm{\beta}}\varepsilon_h^y}_{\mathcal T_h}^2.
	\end{align*}
	For the terms $T_6$, $T_7$ and $T_8$, Young's equality gives
	\begin{align*}
	\hspace{1em}&\hspace{-1em} T_6 + T_7+T_8 \\
	&\le Ch^{-1} (\|\Pi_h^{k+1} y - y\|_{\partial \mathcal T_h}^2 + \|\mathcal I_h^{k+1} y - y\|_{\partial \mathcal T_h}^2) + \frac{1}{16h}\|{\varepsilon_h^y-\varepsilon_h^{\widehat{y}}}\|_{\partial \mathcal T_h\backslash\mathcal E_h^\partial}^2\\
	&\quad +  \frac 1 {16 h} \|\varepsilon_h^y\|_{\mathcal E_h^\partial}^2.
	\end{align*}
	For the last two terms $T_9$ and $T_{10}$, apply the trace inequality \Cref{improved_trace_inequality} to get 
	\begin{align*}
	T_9+ T_{10} &= \langle  (\bm \Pi_h^0 \bm q - \bm q)\cdot\bm n,{\varepsilon_h^y-\varepsilon_h^{\widehat{y}}}\rangle_{\partial\mathcal T_h\backslash\mathcal E_h^\partial} +  \langle  (\bm \Pi_h^0 \bm q - \bm q)\cdot\bm n,\varepsilon_h^y\rangle_{\mathcal E_h^\partial}\\
	&\le Ch\|\nabla\cdot((\bm \Pi_h^0 \bm q - \bm q)\|_{\mathcal T_h}^2 +Ch^{-1}\|\bm \Pi_h^0 \bm q - \bm q\|_{\mathcal T_h}^2+ \frac{1}{16h}\|{\varepsilon_h^y-\varepsilon_h^{\widehat{y}}}\|_{\partial \mathcal T_h\backslash\mathcal E_h^\partial}^2\\
	&\quad +Ch\|\bm \Pi_h^0 \bm q - \bm q\|_{\partial \mathcal T_h}^2 + \frac{1}{16h}\|\varepsilon_h^y\|_{\mathcal E_h^\partial}^2\\
	&= Ch\|\nabla\cdot \bm q\|_{\mathcal T_h}^2 +Ch^{-1}\|\bm \Pi_h^0 \bm q - \bm q\|_{\mathcal T_h}^2+ \frac{1}{16h}\|{\varepsilon_h^y-\varepsilon_h^{\widehat{y}}}\|_{\partial \mathcal T_h\backslash\mathcal E_h^\partial}^2\\
	&\quad +Ch\|\bm \Pi_h^0 \bm q - \bm q\|_{\partial \mathcal T_h}^2 + \frac{1}{16h}\|\varepsilon_h^y\|_{\mathcal E_h^\partial}^2.
	\end{align*}
	
	Summing the estimates for $\{T_i\}_{i=1}^{10}$ gives the result.
\end{proof}

\subsubsection{Step 3: Estimates for $\varepsilon_h^y$ by a duality argument.} \label{subsec:proof_step_3}
Next, we introduce the dual problem for any given $\Theta$ in $L^2(\Omega):$
\begin{equation}\label{Dual_PDE}
\begin{split}
\bm\Phi+\nabla\Psi&=0\qquad~\text{in}\ \Omega,\\
\nabla\cdot\bm{\Phi}  - \nabla\cdot(\bm{\beta} \Psi)&=\Theta \qquad\text{in}\ \Omega,\\
\Psi&=0\qquad~\text{on}\ \partial\Omega.
\end{split}
\end{equation}
Since the domain $\Omega$ is convex, we have the following regularity estimate
\begin{align}\label{dual_esti}
\norm{\bm \Phi}_{1} + \norm{\Psi}_{2} \le C_{\text{reg}} \norm{\Theta}_{\mathcal T_h},
\end{align}
\begin{lemma} \label{dual_y}
	For $\varepsilon_h^y$ defined in \Cref{step1:error_equation}, we have 
	\begin{align*}
	\|\varepsilon_h^y\|_{\mathcal T_h} &\le  C (\|\Pi_h^{k+1} y - y\|_{\mathcal T_h} + h^{1/2}(\|\Pi_h^{k+1} y - y\|_{\partial \mathcal T_h} +\|\mathcal I_h^{k+1} y - y\|_{\partial \mathcal T_h} ))\\
	&\quad  + C(h^2 \|\nabla\cdot\bm q\|_{\mathcal T_h} + h\| \bm q - \bm{\Pi}_h^0 \bm q\|_{\mathcal T_h}).
	\end{align*}
\end{lemma}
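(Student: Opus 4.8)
The plan is to run an Aubin--Nitsche duality argument, using the improved trace inequality (\Cref{improved_trace_inequality}) to cope with the low regularity of $\bm q$. First I would take $\Theta = \varepsilon_h^y$ in the dual problem \eqref{Dual_PDE}, so that by \eqref{dual_esti} the dual solution obeys $\|\bm \Phi\|_{1} + \|\Psi\|_{2}\le C_{\mathrm{reg}}\|\varepsilon_h^y\|_{\mathcal T_h}$, and write $\|\varepsilon_h^y\|_{\mathcal T_h}^2 = (\varepsilon_h^y,\Theta)_{\mathcal T_h}$. Since $\varepsilon_h^y$ is the scalar component of the \emph{first} error triple, the object I want to pair it against is the $\mathscr B_2$ form evaluated at a projection of the dual solution. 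I would set $\bm \Phi_h = \bm \Pi_h^k\bm\Phi$, $\Psi_h = \Pi_h^{k+1}\Psi$, and $\widehat\Psi_h = \mathcal I_h^{k+1}\Psi$; the last is well-defined because $\Psi\in H^2\subset \mathcal C^0$ with $\Psi=0$ on $\partial\Omega$, so $\widehat\Psi_h\in M_h(o)$.

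Second, mirroring the computation of \Cref{Pro_B1} and \Cref{Pro_B2}, I would derive a dual identity: using the continuous dual equations, $\Psi=0$ on $\partial\Omega$, and the $L^2$-projection properties \eqref{def_L2}, I expect to obtain, for all $(\bm r_2,w_2,\mu_2)\in \bm V_h\times W_h\times M_h(o)$,
\begin{align*}
\mathscr B_2(\bm\Phi_h,\Psi_h,\widehat\Psi_h;\bm r_2,w_2,\mu_2) = (\Theta,w_2)_{\mathcal T_h} + R(\bm r_2,w_2,\mu_2),
\end{align*}
where $R$ collects interior and boundary terms, each carrying one \emph{dual} projection error $\bm\Phi-\bm\Pi_h^k\bm\Phi$, $\Psi-\Pi_h^{k+1}\Psi$, or $\Psi-\mathcal I_h^{k+1}\Psi$. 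Then I would test the primal error equation of \Cref{step1:error_equation} with $(\bm r_1,w_1,\mu_1)=(\bm\Phi_h,\Psi_h,\widehat\Psi_h)$, call the resulting right-hand side $\mathrm{RHS}_1$, and apply the adjointness identity \Cref{identical_equa} to flip the form, giving $\mathscr B_1(\varepsilon_h^{\bm q},\varepsilon_h^y,\varepsilon_h^{\widehat y};\bm\Phi_h,\Psi_h,\widehat\Psi_h)=\mathscr B_2(\bm\Phi_h,\Psi_h,\widehat\Psi_h;-\varepsilon_h^{\bm q},\varepsilon_h^y,\varepsilon_h^{\widehat y})$. Evaluating the dual identity at $(-\varepsilon_h^{\bm q},\varepsilon_h^y,\varepsilon_h^{\widehat y})$ produces the leading term $(\Theta,\varepsilon_h^y)_{\mathcal T_h}=\|\varepsilon_h^y\|_{\mathcal T_h}^2$, so the two combine to
\begin{align*}
\|\varepsilon_h^y\|_{\mathcal T_h}^2 = \mathrm{RHS}_1 - R(-\varepsilon_h^{\bm q},\varepsilon_h^y,\varepsilon_h^{\widehat y}).
\end{align*}

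Finally I would bound the right-hand side term by term. Each summand is a product of a \emph{primal} factor (a primal projection error $\bm q-\bm\Pi_h^0\bm q$, $y-\Pi_h^{k+1}y$, $y-\mathcal I_h^{k+1}y$, or a discrete error $\varepsilon_h^{\bm q}$, $\varepsilon_h^y-\varepsilon_h^{\widehat y}$, $\varepsilon_h^y$) and a \emph{dual} factor; the point is that every dual factor gains one power of $h$ from the smoothness $\bm\Phi\in[H^1]^d$, $\Psi\in H^2$. For interior terms such as $\varepsilon^{-1}(\bm q-\bm\Pi_h^0\bm q,\bm\Phi_h)_{\mathcal T_h}$ and $(\bm q-\bm\Pi_h^0\bm q,\nabla\Psi_h)_{\mathcal T_h}$ I would use that $\bm q-\bm\Pi_h^0\bm q$ is $L^2$-orthogonal to constants, subtract the elementwise mean of $\bm\Phi_h$ (resp.\ $\nabla\Psi_h$), and apply Poincar\'e together with the stability bounds $\|\nabla\bm\Pi_h^k\bm\Phi\|_{\mathcal T_h}\le C\|\bm\Phi\|_1$ and $\|\nabla^2\Pi_h^{k+1}\Psi\|_{\mathcal T_h}\le C\|\Psi\|_2$, which yields a factor $Ch\|\bm q-\bm\Pi_h^0\bm q\|_{\mathcal T_h}$. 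The dual trace errors satisfy $\|\Psi-\Pi_h^{k+1}\Psi\|_{\partial\mathcal T_h}+\|\Psi-\mathcal I_h^{k+1}\Psi\|_{\partial\mathcal T_h}\le Ch^{3/2}\|\Psi\|_2$, the discrete-error factors in $R(-\varepsilon_h^{\bm q},\varepsilon_h^y,\varepsilon_h^{\widehat y})$ are controlled by the energy estimate of \Cref{energy_norm_q}, and the regularity bound \eqref{dual_esti} replaces $\|\bm\Phi\|_1+\|\Psi\|_2$ by $C\|\varepsilon_h^y\|_{\mathcal T_h}$; dividing through by $\|\varepsilon_h^y\|_{\mathcal T_h}$ then delivers the stated estimate.

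The main obstacle is the pair of boundary terms $\langle(\bm\Pi_h^0\bm q-\bm q)\cdot\bm n,\,\Psi_h-\widehat\Psi_h\rangle_{\partial\mathcal T_h\backslash\mathcal E_h^\partial}$ and $\langle(\bm\Pi_h^0\bm q-\bm q)\cdot\bm n,\,\Psi_h\rangle_{\mathcal E_h^\partial}$, where $\bm q\cdot\bm n$ need not possess an $L^2$ trace. Here I would invoke \Cref{improved_trace_inequality} to bound each by $Ch^{-1/2}\|\cdot\|_{\partial\mathcal T_h}\big(\|\bm q-\bm\Pi_h^0\bm q\|_{\mathcal T_h}+h\|\nabla\cdot\bm q\|_{\mathcal T_h}\big)$; combined with the $h^{3/2}$ smallness of the accompanying dual trace error, this produces precisely the $h\|\bm q-\bm\Pi_h^0\bm q\|_{\mathcal T_h}$ and $h^2\|\nabla\cdot\bm q\|_{\mathcal T_h}$ contributions. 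This is exactly the place where the new low-regularity technique enters and replaces the special projection operator of \cite{HuMateosSinglerZhangZhang2}.
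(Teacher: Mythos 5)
Your proposal follows essentially the same route as the paper: test the error equation of \Cref{step1:error_equation} with projections of the dual solution of \eqref{Dual_PDE}, flip $\mathscr B_1$ into $\mathscr B_2$ via \Cref{identical_equa}, use the dual analogue of \Cref{Pro_B2} to extract $(\Theta,\varepsilon_h^y)_{\mathcal T_h}=\|\varepsilon_h^y\|^2_{\mathcal T_h}$, and estimate the remainder with the dual regularity \eqref{dual_esti}, the orthogonality/mean-subtraction trick, and \Cref{improved_trace_inequality} for the low-regularity flux boundary terms. The only slip is bookkeeping: \Cref{identical_equa} requires the scalar test components of $\mathscr B_1$ to be $(-\Pi_h^{k+1}\Psi,-\mathcal I_h^{k+1}\Psi)$ and introduces an overall sign, exactly as in the paper's choice of test functions, but this does not affect the structure or the final bound.
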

\begin{proof}
	First,   take $(\bm r_1,w_1,\mu_1)=(\bm \Pi_h^k \bm \Phi, -\Pi_h^{k+1}  \Psi, -\mathcal I_h^{k+1} \Psi)$ in  \Cref{step1:error_equation}  and use  $\Psi = 0$ on $\mathcal E_h^\partial$ to obtain
	\begin{align*}
	\hspace{1em}&\hspace{-1em}\mathscr B_1(\varepsilon^{\bm q}_h,\varepsilon^y_h,\varepsilon^{\widehat{y}}_h;\bm \Pi_h^k \bm \Phi,-\Pi_h^{k+1} \Psi, -\mathcal I_h^{k+1} \Psi)\\
	& =-\varepsilon^{-1}( {\bm q} - \bm \Pi_h^0 \bm q, \bm \Pi_h^k \bm \Phi)_{{\mathcal{T}_h}}  + \langle  \mathcal I_h^{k+1} y - y, \bm \Pi_h^k \bm \Phi \cdot \bm{n} \rangle_{\partial{{\mathcal{T}_h}}\backslash {\mathcal E_h^{\partial}}} \\
	& \quad + (\bm \Pi_h^0 \bm q - \bm q, \nabla \Pi_h^{k+1}  \Psi)_{\mathcal{T}_h}-  (\bm \beta (y - \Pi_h^{k+1} y),  \nabla \Pi_h^{k+1} \Psi)_{{\mathcal{T}_h}} \\
	&\quad   - (\nabla\cdot \bm \beta ( y - \Pi_h^{k+1} y), \Pi_h^{k+1} \Psi)_{\mathcal T_h}\\
	&\quad  -\langle (h^{-1}+\tau_1)(\Pi_h^{k+1} y -\mathcal I_h^{k+1}  y), \Pi_h^{k+1} \Psi - \mathcal I_h^{k+1} \Psi \rangle_{\partial{{\mathcal{T}_h}}\backslash\mathcal E_h^\partial} \\
	& \quad   -\langle (h^{-1}+\tau_1)(\Pi_h^{k+1} y - y), \Pi_h^{k+1} \Psi-\mathcal I_h^{k+1} \Psi \rangle_{\mathcal E_h^\partial} \\
	&\quad - \langle \bm{\beta}\cdot\bm n (\mathcal I_h^{k+1}  y-y), \Pi_h^{k+1} \Psi-\mathcal I_h^{k+1} \Psi \rangle_{\partial{{\mathcal{T}_h}}\backslash \mathcal E_h^{\partial}}\\
	&\quad -\langle  (\bm \Pi_h^0 \bm q - \bm q)\cdot\bm n,\Pi_h^{k+1} \Psi-\mathcal I_h^{k+1} \Psi\rangle_{\partial\mathcal T_h}.
	\end{align*}
	
	On the other hand, \Cref{identical_equa} and \Cref{Pro_B2} imply
	\begin{align*}
	\hspace{1em}&\hspace{-1em}\mathscr B_1(\varepsilon^{\bm q}_h,\varepsilon^y_h,\varepsilon^{\widehat{y}}_h;\bm \Pi_h^{k} \bm \Phi,-\Pi_h^{k+1} \Psi, -\mathcal I_h^{k+1} \Psi)\\
	& = -\mathscr B_2(\bm \Pi_h^k \bm \Phi,\Pi_h^{k+1} \Psi, \mathcal I_h^{k+1} \Psi; -\varepsilon^{\bm q}_h,\varepsilon^y_h,\varepsilon^{\widehat{y}}_h)\\
	& =  -(\Theta,\varepsilon^y_h)_{\mathcal T_h} +\langle  \mathcal I_h^{k+1} \Psi - \Psi, \varepsilon^{\bm q}_h \cdot \bm{n} \rangle_{\partial{{\mathcal{T}_h}}\backslash {\mathcal E_h^{\partial}}}  + (\bm \beta (\Psi - \Pi_h^{k+1} \Psi),  \nabla \varepsilon^y_h)_{{\mathcal{T}_h}}\\
	&\quad  -\langle (h^{-1}+\tau_2)(\Pi_h^{k+1} \Psi -\mathcal I_h^{k+1}  \Psi), \varepsilon^y_h - \varepsilon^{\widehat{y}}_h \rangle_{\partial{{\mathcal{T}_h}}\backslash\mathcal E_h^\partial}  \\
	&\quad  -\langle (h^{-1}+\tau_2)(\Pi_h^{k+1} \Psi - \Psi), \varepsilon^y_h \rangle_{\mathcal E_h^\partial}\\
	&\quad + \langle \bm{\beta}\cdot\bm n (\mathcal I_h^{k+1}  \Psi-\Psi), \varepsilon^y_h-\varepsilon^{\widehat{y}}_h \rangle_{\partial{{\mathcal{T}_h}}\backslash \mathcal E_h^{\partial}} \\
	& \quad  - \langle  (\bm \Pi_h^k \bm \Phi - \bm \Phi)\cdot\bm n,\varepsilon^y_h - \varepsilon^{\widehat{y}}_h\rangle_{\partial\mathcal T_h\backslash\mathcal E_h^{\partial}} -  \langle  (\bm \Pi_h^k \bm \Phi - \bm \Phi)\cdot\bm n,\varepsilon^y_h\rangle_{\mathcal E_h^{\partial}}.
	\end{align*}
	
	Compare the above two equalities and take $\Theta  = \varepsilon_h^y$ to obtain
	\begin{align*}
	\|\varepsilon_h^y\|_{\mathcal T_h}^2 & =   \langle  \mathcal I_h^{k+1} \Psi - \Psi, \varepsilon^{\bm q}_h \cdot \bm{n} \rangle_{\partial{{\mathcal{T}_h}}\backslash {\mathcal E_h^{\partial}}} +(  \bm \beta (\Psi - \Pi_h^{k+1} \Psi),  \nabla \varepsilon^y_h)_{{\mathcal{T}_h}}    \\
	&\quad  -\langle (h^{-1}+\tau_2)(\Pi_h^{k+1} \Psi -\mathcal I_h^{k+1}  \Psi), \varepsilon^y_h - \varepsilon^{\widehat{y}}_h \rangle_{\partial{{\mathcal{T}_h}}\backslash\mathcal E_h^\partial} \\
	& \quad -\langle (h^{-1}+\tau_2)(\Pi_h^{k+1} \Psi - \Psi), \varepsilon^y_h \rangle_{\mathcal E_h^\partial} \\
	&\quad  - \langle  (\bm \Pi_h^k \bm \Phi - \bm \Phi)\cdot\bm n,\varepsilon^y_h - \varepsilon^{\widehat{y}}_h\rangle_{\partial\mathcal T_h\backslash\mathcal E_h^{\partial}} -  \langle  (\bm \Pi_h^k \bm \Phi - \bm \Phi)\cdot\bm n,\varepsilon^y_h\rangle_{\mathcal E_h^{\partial}}\\
	&\quad + \langle \bm{\beta}\cdot\bm n (\mathcal I_h^{k+1}  \Psi-\Psi), \varepsilon^y_h-\varepsilon^{\widehat{y}}_h \rangle_{\partial{{\mathcal{T}_h}}\backslash \mathcal E_h^{\partial}}\\
	&\quad +\varepsilon^{-1}( {\bm q} - \bm \Pi_h^0 \bm q, \bm \Pi_h^k \bm \Phi)_{{\mathcal{T}_h}}  + \langle  \mathcal I_h^{k+1} y - y, \bm \Pi_h^k \bm \Phi \cdot \bm{n} \rangle_{\partial{{\mathcal{T}_h}}\backslash {\mathcal E_h^{\partial}}}\\
	& \quad - (\bm \Pi_h^{0} \bm q - \bm q, \nabla\Pi_h^{k+1}  \Psi)_{\mathcal{T}_h} -  (  \bm \beta (y - \Pi_h^{k+1} y),  \nabla \Pi_h^{k+1} \Psi)_{{\mathcal{T}_h}} \\
	&\quad  - (\nabla\cdot \bm \beta ( y - \Pi_h^{k+1} y), \Pi_h^{k+1} \Psi)_{\mathcal T_h}\\
	&\quad  -\langle (h^{-1}+\tau_1)(\Pi_h^{k+1} y -\mathcal I_h^{k+1}  y), \Pi_h^{k+1} \Psi - \mathcal I_h^{k+1} \Psi \rangle_{\partial{{\mathcal{T}_h}}\backslash\mathcal E_h^\partial} \\
	& \quad -\langle (h^{-1}+\tau_1)(\Pi_h^{k+1} y - y), \Pi_h^{k+1} \Psi-\mathcal I_h^{k+1} \Psi \rangle_{\mathcal E_h^\partial}\\
	&\quad  - \langle \bm{\beta}\cdot\bm n (\mathcal I_h^{k+1}  y-y), \Pi_h^{k+1} \Psi-\mathcal I_h^{k+1} \Psi \rangle_{\partial{{\mathcal{T}_h}}\backslash \mathcal E_h^{\partial}}\\
	&\quad -\langle  (\bm \Pi_h^0 \bm q - \bm q)\cdot\bm n,\Pi_h^{k+1} \Psi-\mathcal I_h^{k+1} \Psi\rangle_{\partial\mathcal T_h},\\
	& = \sum_{i=1}^{16} R_i.
	\end{align*}
	Estimates for the above $16$ terms can be easily obtained using the proof techniques in \Cref{energy_norm_q}; we omit the details.  We have 
	\begin{align*}
	\|\varepsilon_h^y\| &\le  C (\|\Pi_h^{k+1} y - y\|_{\mathcal T_h} + h^{1/2}(\|\Pi_h^{k+1} y - y\|_{\partial \mathcal T_h} +\|\mathcal I_h^{k+1} y - y\|_{\partial \mathcal T_h} ))\\
	&\quad  + C(h^2 \|\nabla\cdot\bm q\|_{\mathcal T_h} + h\| \bm q - \bm{\Pi}_h^0 \bm q\|_{\mathcal T_h}).
	\end{align*}
\end{proof}

As a consequence, a simple application of the triangle inequality gives optimal convergence rates for $\|\bm q -\bm q_h(u)\|_{\mathcal T_h}$ and $\|y -y_h(u)\|_{\mathcal T_h}$:

\begin{lemma}\label{lemma:step3_conv_rates}
	Let  $(\bm q, y)$ and $(\bm q_h(u), y_h(u))$ be the solutions of 
	\eqref{mixed} and \eqref{IEDG_u_a}, respectively. We have 
	\begin{subequations}
		\begin{align}
		\|\bm q -\bm q_h(u)\|_{\mathcal T_h} &\le \|\bm q - \bm{\Pi}_h^0 \bm q\|_{\mathcal T_h} + \|\varepsilon_h^{\bm q}\|_{\mathcal T_h}\nonumber \\
		&\le C \|\bm q - \bm \Pi_h^0 \bm q\|_{\mathcal T_h} + Ch \|\nabla \cdot \bm q\|_{\mathcal T_h}\nonumber\\
		&\quad + Ch^{-1/2} (\|\Pi_h^{k+1} y - y\|_{\partial \mathcal T_h} + \|\mathcal I_h^{k+1} y - y\|_{\partial \mathcal T_h}),\\
		\|y -y_h(u)\|_{\mathcal T_h} &\le \|y - \Pi_h^{k+1} y\|_{\mathcal T_h} + \|\varepsilon_h^{y}\|_{\mathcal T_h} \nonumber \\
		& \le C \|\Pi_h^{k+1} y - y\|_{\mathcal T_h} + Ch^2 \|\nabla\cdot\bm q\|_{\mathcal T_h} + Ch\| \bm q - \bm{\Pi}_h^0 \bm q\|_{\mathcal T_h}\nonumber\\
		&\quad +C h^{1/2}(\|\Pi_h^{k+1} y - y\|_{\partial \mathcal T_h} +\|\mathcal I_h^{k+1} y - y\|_{\partial \mathcal T_h}).
		\end{align}
	\end{subequations}
\end{lemma}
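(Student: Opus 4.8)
The plan is to recognize that this lemma is an immediate consequence of the triangle inequality combined with the energy estimate of \Cref{energy_norm_q} and the duality estimate of \Cref{dual_y}; essentially all of the real work has already been done. The auxiliary error variables from \Cref{step1:error_equation} were defined precisely so that the true approximation error decomposes into a projection error plus an auxiliary error. First I would write $\bm q - \bm q_h(u) = (\bm q - \bm\Pi_h^0\bm q) + \varepsilon_h^{\bm q}$ and $y - y_h(u) = (y - \Pi_h^{k+1}y) + \varepsilon_h^y$, using $\varepsilon_h^{\bm q} = \bm\Pi_h^0\bm q - \bm q_h(u)$ and $\varepsilon_h^y = \Pi_h^{k+1}y - y_h(u)$. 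The triangle inequality then yields the first inequality displayed on each line of the statement.

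Next I would insert the two prior bounds. For the flux, \Cref{energy_norm_q} controls the full energy norm---and in particular its summand $\|\varepsilon_h^{\bm q}\|_{\mathcal T_h}$---by $C\|\bm q - \bm\Pi_h^0\bm q\|_{\mathcal T_h} + Ch\|\nabla\cdot\bm q\|_{\mathcal T_h} + Ch^{-1/2}(\|\Pi_h^{k+1}y - y\|_{\partial\mathcal T_h} + \|\mathcal I_h^{k+1}y - y\|_{\partial\mathcal T_h})$. Adding the leftover projection term $\|\bm q - \bm\Pi_h^0\bm q\|_{\mathcal T_h}$ and merging it into the generic constant produces the claimed estimate for $\|\bm q - \bm q_h(u)\|_{\mathcal T_h}$. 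For the state, \Cref{dual_y} bounds $\|\varepsilon_h^y\|_{\mathcal T_h}$ by precisely the terms appearing on the right-hand side of the second estimate; since the projection term $\|y - \Pi_h^{k+1}y\|_{\mathcal T_h}$ already occurs there, adding it back and relabeling constants finishes the bound for $\|y - y_h(u)\|_{\mathcal T_h}$.

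I do not expect any genuine obstacle in this step. The coercivity identity of \Cref{property_B}, the improved trace inequality \Cref{improved_trace_inequality}, and the Aubin--Nitsche duality argument against the adjoint regularity \eqref{dual_esti} have all been completed in the two preceding lemmas, so what remains is purely a matter of assembly. The only point that warrants a moment of care is the bookkeeping: one should check that every term generated by \Cref{energy_norm_q} and \Cref{dual_y} is already present, up to a constant, among the terms listed in the statement, so that all constants can be absorbed into a single generic $C$. This verification is routine, and the argument reduces to writing down the triangle inequality and citing the two earlier results.
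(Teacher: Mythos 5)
Your proposal is correct and matches the paper exactly: the paper itself presents this lemma as "a simple application of the triangle inequality" to the decompositions $\bm q - \bm q_h(u) = (\bm q - \bm\Pi_h^0\bm q) + \varepsilon_h^{\bm q}$ and $y - y_h(u) = (y - \Pi_h^{k+1}y) + \varepsilon_h^y$, followed by citing \Cref{energy_norm_q} and \Cref{dual_y}. Nothing further is needed.
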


\subsubsection{Step 4: The error equation for part 2 of the auxiliary problem \eqref{IEDG_u_b}.} \label{subsec:proof_step_4}
Subtracting part 2 of the auxiliary problem \eqref{IEDG_u_b} from the  equality  in \Cref{Pro_B2} gives the error equation:
\begin{lemma} \label{step4:error_equation}
	For $\varepsilon^{\bm p}_h={\bm\Pi}_h^k\bm p-\bm p_h(u)$, $\varepsilon^{z}_h=\Pi_h^{k+1} z-z_h(u)$,  $\varepsilon^{\widehat z}_h=\mathcal I_h^{k+1} z- \widehat z_h^o(u)$,  we have
	\begin{align*}
	\hspace{1em}&\hspace{-1em}  \mathscr B_2 (\varepsilon_h^{\bm p},\varepsilon_h^{z},\varepsilon_h^{\widehat z}, \bm r_2, w_2, \mu_2)\\
	&= ( y - y_h(u), w_2)_{\mathcal T_h} -  (\bm \Pi \bm p - \bm p,\nabla w_2)_{\mathcal{T}_h}\\
	&\quad + \langle  \mathcal I_h^{k+1} z - z, \bm r_2\cdot \bm{n} \rangle_{\partial{{\mathcal{T}_h}}\backslash {\mathcal E_h^{\partial}}} - (  \bm \beta (z - \Pi_h^{k+1} z),  \nabla w_2)_{{\mathcal{T}_h}} \\
	&\quad +\langle (h^{-1}+\tau_2)(\Pi_h^{k+1} z -\mathcal I_h^{k+1}  z), w_2 - \mu_2 \rangle_{\partial{{\mathcal{T}_h}}\backslash\mathcal E_h^\partial} \\
	& \quad +\langle (h^{-1}+\tau_2)(\Pi_h^{k+1} z - z), w_2 \rangle_{\mathcal E_h^\partial} - \langle \bm{\beta}\cdot\bm n (\mathcal I_h^{k+1}  z-z), w_2-\mu_2 \rangle_{\partial{{\mathcal{T}_h}}\backslash \mathcal E_h^{\partial}}\\
	&\quad +\langle  (\bm \Pi_h^k \bm p - \bm p)\cdot\bm n, w_2 - \mu_2\rangle_{\partial{{\mathcal{T}_h}}\backslash \mathcal E_h^{\partial}} + \langle  (\bm \Pi_h^k \bm p - \bm p)\cdot\bm n, w_2 \rangle_{ \mathcal E_h^{\partial}} 
	\end{align*}
	for all $(\bm r_2, w_2,\mu_2)\in \bm V_h\times W_h\times M_h(o)$.
\end{lemma}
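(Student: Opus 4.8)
The plan is to obtain \Cref{step4:error_equation} by exactly the one-line subtraction used for its companion \Cref{step1:error_equation} in Step~1, once the projection identity \Cref{Pro_B2} is available. Since $\mathscr B_2$ is linear in each of its first three arguments, I would first use the definitions $\varepsilon_h^{\bm p}=\bm\Pi_h^k\bm p-\bm p_h(u)$, $\varepsilon_h^{z}=\Pi_h^{k+1}z-z_h(u)$, $\varepsilon_h^{\widehat z}=\mathcal I_h^{k+1}z-\widehat z_h^o(u)$ to write
\[
\mathscr B_2(\varepsilon_h^{\bm p},\varepsilon_h^{z},\varepsilon_h^{\widehat z};\bm r_2,w_2,\mu_2)
=\mathscr B_2(\bm\Pi_h^k\bm p,\Pi_h^{k+1}z,\mathcal I_h^{k+1}z;\bm r_2,w_2,\mu_2)
-\mathscr B_2(\bm p_h(u),z_h(u),\widehat z_h^o(u);\bm r_2,w_2,\mu_2).
\]

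For the second term I would invoke part~2 of the auxiliary problem \eqref{IEDG_u_b}, which states precisely that $\mathscr B_2(\bm p_h(u),z_h(u),\widehat z_h^o(u);\bm r_2,w_2,\mu_2)=(y_h(u)-y_d,w_2)_{\mathcal T_h}$. For the first term I would substitute the identity of \Cref{Pro_B2}, whose leading contribution is $(y-y_d,w_2)_{\mathcal T_h}$ together with the collection of projection-error terms on the right of that lemma. Subtracting, the two $y_d$ contributions cancel and the state terms collapse to $(y-y_h(u),w_2)_{\mathcal T_h}$, which is the first term on the claimed right-hand side; every projection-error term then carries over from \Cref{Pro_B2}. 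The only bookkeeping to watch is the distinction between the edge sets: the test function $\mu_2\in M_h(o)$ has no trace on $\mathcal E_h^\partial$, so any boundary pairing $\langle\cdot,w_2-\mu_2\rangle_{\mathcal E_h^\partial}$ may be written as $\langle\cdot,w_2\rangle_{\mathcal E_h^\partial}$, whereas on the interior skeleton $\partial\mathcal T_h\backslash\mathcal E_h^\partial$ the full difference $w_2-\mu_2$ must be retained; keeping these straight is what reconciles the slightly different appearance of the $\mathcal E_h^\partial$ and $\partial\mathcal T_h\backslash\mathcal E_h^\partial$ terms in the two statements.

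Because the substance is contained in \Cref{Pro_B2} itself, the present lemma poses no genuine obstacle. \Cref{Pro_B2} is proved exactly in the manner of \Cref{Pro_B1}: insert the $L^2$ projections, exploit their defining orthogonality \eqref{def_L2}, integrate the divergence term $(\nabla\cdot\bm p_h,w_2)_{\mathcal T_h}$ by parts to produce the volume term $-(\bm\Pi_h^k\bm p-\bm p,\nabla w_2)_{\mathcal T_h}$ and the boundary flux pairings, and finally eliminate the consistency contributions using the continuous adjoint mixed equations \eqref{mixed_c}--\eqref{mixed_d}. The one point demanding genuine care is the sign convention peculiar to $\mathscr B_2$ (note the $+(\bm\beta z_h,\nabla w_2)$ and $-\langle(h^{-1}+\tau_2+\bm\beta\cdot\bm n)\widehat z_h^o,w_2\rangle$ terms, whose convection signs are opposite to those in $\mathscr B_1$): I would double-check that every $\bm\beta\cdot\bm n$ factor is transcribed with the correct sign when assembling the residual. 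With those conventions fixed, the derivation is a direct transcription of the argument already carried out in Step~1.
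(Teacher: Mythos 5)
Your proposal is correct and is precisely the paper's argument: the paper obtains \Cref{step4:error_equation} by the same one-line subtraction of part 2 of the auxiliary problem \eqref{IEDG_u_b} from the identity of \Cref{Pro_B2}, with \Cref{Pro_B2} itself proved exactly as you describe, mirroring \Cref{Pro_B1}. Your remarks on the vanishing of $\mu_2$-pairings on $\mathcal E_h^\partial$ and on the sign conventions in $\mathscr B_2$ correctly account for the only cosmetic differences between the two statements.
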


\subsubsection{Step 5: Estimates for $\varepsilon_h^p$ and $\varepsilon_h^z$ by an energy argument.} \label{subsec:proof_step_5}
\begin{lemma}\label{lemma:step5_main_lemma}
	For $(\varepsilon_h^{\bm p}, \varepsilon_h^z, \varepsilon_h^{\widehat{z}})$ defined in \Cref{step4:error_equation}, we have
	\begin{align*}
	\hspace{-1em}&\hspace{-1em}\|\varepsilon_h^{z}\|_{\mathcal T_h} +\|\varepsilon_h^{\bm p}\|_{\mathcal T_h}+h^{-\frac{1}{2}}\|\varepsilon_h^z-\varepsilon_h^{\widehat z}\|_{\partial\mathcal T_h\backslash \mathcal E_h^{\partial}} +h^{-\frac{1}{2}}\|\varepsilon_h^z\|_{\mathcal E_h^{\partial}}\\
	&\le C \|\Pi_h^{k+1} y - y\|_{\mathcal T_h} + Ch^{1/2}(\|\Pi_h^{k+1} y - y\|_{\partial \mathcal T_h} +\|\mathcal I_h^{k+1} y - y\|_{\partial \mathcal T_h} )\\
	&\quad  + Ch^2 \|\nabla\cdot\bm q\|_{\mathcal T_h} + Ch\| \bm q - \bm{\Pi}_h^0 \bm q\|_{\mathcal T_h}+   Ch^{1/2}\|\bm p - \bm \Pi_h^k \bm p\|_{\partial\mathcal T_h}\\
	&\quad +  C\|\bm p - \bm \Pi_h^k \bm p\|_{\mathcal T_h} +C h^{-1/2} (\|\Pi_h^{k+1} z - z\|_{\partial \mathcal T_h} + \|\mathcal I_h^{k+1} z- z\|_{\partial \mathcal T_h}).
	\end{align*}
\end{lemma}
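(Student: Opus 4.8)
The plan is to adapt the energy argument used to prove \Cref{energy_norm_q} to the adjoint operator $\mathscr B_2$, with two structural modifications. The first is that the error equation in \Cref{step4:error_equation} carries the extra coupling term $(y-y_h(u),w_2)_{\mathcal T_h}$ on its right-hand side, which ties the present estimate to the primal estimate already obtained in \Cref{lemma:step3_conv_rates}. The second is that, unlike in Step 2 where the $L^2$ bound on $\varepsilon_h^y$ was deferred to the duality argument of Step 3, here the quantity $\|\varepsilon_h^z\|_{\mathcal T_h}$ appears directly on the left-hand side; I recover it from the energy by exploiting that the adjoint state satisfies the homogeneous Dirichlet condition $z=0$ on $\partial\Omega$, so that a discrete Poincar\'e--Friedrichs inequality is available. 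The coercivity identity itself does not control $\|\varepsilon_h^z\|_{\mathcal T_h}$, since the only volume contribution is the $-\tfrac12(\nabla\cdot\bm\beta\,\varepsilon_h^z,\varepsilon_h^z)_{\mathcal T_h}$ term, which may degenerate when $\nabla\cdot\bm\beta$ vanishes.

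Concretely, I would proceed as follows. First, testing the error equation of \Cref{step4:error_equation} with $(\bm r_2,w_2,\mu_2)=(\nabla\varepsilon_h^z,0,0)$ and using the definition \eqref{def_B2} of $\mathscr B_2$ together with an element-wise integration by parts yields, exactly as in \eqref{nablay}, a bound of the broken gradient
\[
\|\nabla\varepsilon_h^z\|_{\mathcal T_h}\le C\big(\|\varepsilon_h^{\bm p}\|_{\mathcal T_h}+h^{-1/2}\|\varepsilon_h^z-\varepsilon_h^{\widehat z}\|_{\partial\mathcal T_h\backslash\mathcal E_h^\partial}+h^{-1/2}\|\varepsilon_h^z\|_{\mathcal E_h^\partial}+h^{-1/2}\|\mathcal I_h^{k+1}z-z\|_{\partial\mathcal T_h}\big).
\]
Since $z$ vanishes on $\partial\Omega$, a discrete Poincar\'e--Friedrichs inequality compatible with the trace space $M_h(o)$ then converts this into a bound of $\|\varepsilon_h^z\|_{\mathcal T_h}$ by the same right-hand side. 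Next, taking the diagonal test function $(\varepsilon_h^{\bm p},\varepsilon_h^z,\varepsilon_h^{\widehat z})$ in the second identity of \Cref{property_B}, and using $\nabla\cdot\bm\beta\le 0$ from \eqref{eqn:beta_assumptions1}, I obtain the coercivity lower bound
\[
\mathscr B_2(\varepsilon_h^{\bm p},\varepsilon_h^z,\varepsilon_h^{\widehat z};\varepsilon_h^{\bm p},\varepsilon_h^z,\varepsilon_h^{\widehat z})\ge c\big(\|\varepsilon_h^{\bm p}\|_{\mathcal T_h}^2+h^{-1}\|\varepsilon_h^z-\varepsilon_h^{\widehat z}\|_{\partial\mathcal T_h\backslash\mathcal E_h^\partial}^2+h^{-1}\|\varepsilon_h^z\|_{\mathcal E_h^\partial}^2\big).
\]
Testing the same diagonal function in the error equation of \Cref{step4:error_equation} gives the matching upper bound as a sum of terms analogous to $T_1,\dots,T_{10}$ in \Cref{energy_norm_q}, which I estimate by Cauchy--Schwarz, Young's inequality, and inverse and trace inequalities; here the flux $\bm p\in[H^{r_{\bm p}}(\Omega)]^d$ with $r_{\bm p}>1$ possesses an $L^2$ boundary trace, so its boundary contributions are controlled by the standard bound $h^{1/2}\|\bm p-\bm\Pi_h^k\bm p\|_{\partial\mathcal T_h}$ without recourse to \Cref{improved_trace_inequality}.

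The delicate term is the coupling contribution $(y-y_h(u),\varepsilon_h^z)_{\mathcal T_h}$. I would bound it by $\|y-y_h(u)\|_{\mathcal T_h}\|\varepsilon_h^z\|_{\mathcal T_h}$, insert the Poincar\'e estimate for $\|\varepsilon_h^z\|_{\mathcal T_h}$ derived above, and then apply Young's inequality so that the energy part is absorbed into the coercivity lower bound while the factor $\|y-y_h(u)\|_{\mathcal T_h}$ is replaced by the right-hand side of \Cref{lemma:step3_conv_rates}; this is precisely where the $y$- and $\bm q$-dependent terms in the claimed estimate enter. The main obstacle is therefore organizing this absorption correctly: one must carry $\|\varepsilon_h^z\|_{\mathcal T_h}$ on the left throughout, choose the Young constants small enough that both the energy quantities and $\|\varepsilon_h^z\|_{\mathcal T_h}$ can be moved to the left after invoking Poincar\'e, and check that the discrete Poincar\'e inequality holds uniformly for the EDG and IEDG trace choices $M_h(\partial)$. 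Once the small multiples of $\|\varepsilon_h^{\bm p}\|_{\mathcal T_h}$, the jump norms, and $\|\varepsilon_h^z\|_{\mathcal T_h}$ are absorbed, collecting the remaining projection-error terms for $y$, $\bm q$, $\bm p$, and $z$ yields the stated bound.
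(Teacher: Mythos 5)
Your proposal follows essentially the same route as the paper: a diagonal energy test of $\mathscr B_2$ combined with the error equation, term-by-term estimates as in \Cref{energy_norm_q}, the gradient bound analogous to \eqref{nablay} for $(\bm p,z,\widehat z)$, and the discrete Poincar\'e inequality (the paper cites Brenner's version with the jump seminorm, where the boundary contribution $h^{-1/2}\|\varepsilon_h^z\|_{\mathcal E_h^\partial}$ is retained and absorbed by the coercivity) to recover $\|\varepsilon_h^z\|_{\mathcal T_h}$ and to close the coupling term $(y-y_h(u),\varepsilon_h^z)_{\mathcal T_h}$ via Young's inequality and \Cref{lemma:step3_conv_rates}. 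The argument is correct and matches the paper's proof in all essentials.
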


\begin{proof}
	First,  we take $(\bm r_2,w_2,\mu_2)=( \varepsilon_h^{\bm p},\varepsilon_h^z,\varepsilon_h^{\widehat{z}})$ in  \Cref{property_B} to get 
	\begin{align*}
	\hspace{1em}&\hspace{-1em}\mathscr B_2(\varepsilon^{\bm p}_h,\varepsilon^z_h,\varepsilon^{\widehat{z}}_h;\varepsilon^{\bm p}_h,\varepsilon^z_h,\varepsilon^{\widehat{z}}_h) \\
	&=\varepsilon^{-1}(\varepsilon^{\bm p}_h,\varepsilon^{\bm p}_h)_{\mathcal T_h}+ \langle (h^{-1}+\tau_2 + \frac 12 \bm \beta\cdot\bm n)(\varepsilon^{z}_h-\varepsilon^{\widehat{z}}_h),\varepsilon^{z}_h-\varepsilon^{\widehat{z}}_h\rangle_{\partial\mathcal T_h\backslash \mathcal E_h^\partial}\\
	&\quad-\frac 1 2(\nabla\cdot\bm\beta \varepsilon^{z}_h,\varepsilon^{z}_h)_{\mathcal T_h} +\langle (h^{-1}+\tau_2+\frac12\bm \beta\cdot\bm n) \varepsilon^{z}_h,\varepsilon^{z}_h\rangle_{\mathcal E_h^\partial}.
	\end{align*}
	Next,  take $(\bm r_2,w_2,\mu_2)=( \varepsilon_h^{\bm p},\varepsilon_h^z,\varepsilon_h^{\widehat{z}})$ in  \Cref{step1:error_equation} to obtain
	\begin{align*}
	\hspace{1em}&\hspace{-1em} \mathscr B_2(\varepsilon^{\bm p}_h,\varepsilon^z_h,\varepsilon^{\widehat{z}}_h;\varepsilon^{\bm p}_h,\varepsilon^z_h,\varepsilon^{\widehat{z}}_h)  \\
	&= (\bm \Pi_h^{k} \bm p - \bm p,\nabla \varepsilon_h^z)_{\mathcal{T}_h}+\langle  \mathcal I_h^{k+1} z - z, \varepsilon^{\bm p}_h\cdot \bm{n} \rangle_{\partial{{\mathcal{T}_h}}} \\
	& \quad  -  (  \bm \beta (z - \Pi_h^{k+1} z),  \nabla \varepsilon_h^z)_{{\mathcal{T}_h}} +\langle (h^{-1}+\tau_2)(\Pi_h^{k+1} z -\mathcal I_h^{k+1}  z), \varepsilon_h^z  - \varepsilon_h^{\widehat z}  \rangle_{\partial{{\mathcal{T}_h}}\backslash\mathcal E_h^\partial}\\
	&\quad   +\langle (h^{-1}+\tau_2)(\Pi_h^{k+1} z - z), \varepsilon_h^z  \rangle_{\mathcal E_h^\partial}  - \langle \bm{\beta}\cdot\bm n (\mathcal I_h^{k+1}  z-z), \varepsilon_h^z  - \varepsilon_h^{\widehat z}  \rangle_{\partial{{\mathcal{T}_h}}\backslash \mathcal E_h^{\partial}}\\
	&\quad +\langle  (\bm \Pi_h^k \bm p - \bm p)\cdot\bm n, \varepsilon_h^z  - \varepsilon_h^{\widehat z}  \rangle_{\partial\mathcal T_h\backslash \mathcal E_h^{\partial}} +\langle  (\bm \Pi_h^k \bm p - \bm p)\cdot\bm n, \varepsilon_h^z \rangle_{\mathcal E_h^{\partial}} \\
	&\quad  +(y- y_h(u), \varepsilon_h^z)_{\mathcal T_h}\\ 
	& = T_1 + T_2 + T_3 + T_4 +T_5 + T_6 + T_7 + T_8.
	\end{align*}
	For the terms $T_1 - T_7$, follow the proof of \Cref{energy_norm_q} to get
	\begin{align*}
	\sum_{i=1}^7 T_i  &\le Ch^{-1} (\| \mathcal I_h z - z \|_{\partial \mathcal{T}_h}^2 + \|z - \Pi z\|_{\mathcal T_h}^2) + Ch\|\bm \Pi_h^{k} \bm p - \bm p\|_{\partial \mathcal T_h}^2 \\
	&\quad +C\|\bm \Pi_h^{k} \bm p - \bm p\|_{\mathcal T_h}^2 + \frac{1}{16\varepsilon}\| \varepsilon_h^{\bm p}\|_{\mathcal{T}_h}^2+ \frac 1 {16 h} \|{\varepsilon_h^z-\varepsilon_h^{\widehat{z}}}\|_{\partial \mathcal T_h\backslash\mathcal E_h^\partial}^2\\
	&\quad + \frac 1 {16 h} \|\varepsilon_h^z\|_{\mathcal E_h^\partial}^2 + h^{-1}\|\mathcal I_h^{k+1} z - z\|_{\partial \mathcal T_h}^2.
	\end{align*}
	For the last term $T_8$, we introduce a discrete Poincar{\'e} inequality from \cite{Brenner_Poincare_SINUM_2003}:
	\begin{equation}\label{poin_in}
	\begin{split}
	\|\varepsilon_h^z\|_{\mathcal T_h} &\le C(\|\nabla \varepsilon_h^z\|_{\mathcal T_h}+h^{-\frac 1 2} \|[\![\varepsilon_h^z]\!]\|_{\mathcal E_h})\\
	&= C(\|\nabla \varepsilon_h^z\|_{\mathcal T_h}+h^{-\frac 1 2} \|[\![\varepsilon_h^z - \varepsilon_h^{\widehat z}]\!]\|_{\mathcal E_h^o} +h^{-\frac 1 2} \| \varepsilon_h^z\|_{\mathcal E_h^\partial})\\
	&\le  C(\|\nabla \varepsilon_h^z\|_{\mathcal T_h}+h^{-\frac 1 2} \|\varepsilon_h^z - \varepsilon_h^{\widehat z}\|_{\partial \mathcal T_h\backslash \mathcal E_h^\partial} + h^{-\frac 1 2} \|\varepsilon_h^z\|_{\mathcal E_h^\partial}),
	\end{split}
	\end{equation}
	where $[\![\varepsilon_h^z]\!]_{\mathcal E_h^o}$ is the jump of $\varepsilon_h^z$ between adjacent elements and $[\![\varepsilon_h^z]\!]= \varepsilon_h^z$ on $\mathcal E_h^\partial$. Note that  the above equality in \eqref{poin_in} holds since $\varepsilon_h^{\widehat z} $ is single-valued on interior  faces, and the last inequality in  \eqref{poin_in} holds due to the triangle inequality.
	
	We note the inequality in \eqref{nablay} is valid with $(\bm p, z,\widehat z)$ in place of $(\bm q, y,\widehat y)$. This gives
	\begin{align*}
	T_8 \le C\| y - y_h(u)\|_{\mathcal T_h}^2 + \frac{1}{16\varepsilon}\| \varepsilon_h^{\bm p}\|_{\mathcal{T}_h}^2+ \frac 1 {16 h} \|{\varepsilon_h^z-\varepsilon_h^{\widehat{z}}}\|_{\partial \mathcal T_h\backslash \mathcal E_h^\partial}^2 + \frac 1 {16 h} \|\varepsilon_h^z\|_{\mathcal E_h^\partial}^2.
	\end{align*}
	Sum the above estimates and use \eqref{poin_in} to obtain the desired result.
\end{proof}

As a consequence, a simple application of the triangle inequality gives optimal convergence rates for $\|\bm p -\bm p_h(u)\|_{\mathcal T_h}$ and $\|z -z_h(u)\|_{\mathcal T_h}$:

\begin{lemma}\label{lemma:step5_conv_rates}
	Let $(\bm p, z)$ and $(\bm p_h(u), z_h(u))$ be the solutions of 
	\eqref{mixed} and \eqref{IEDG_u_b}, respectively. We have 
	\begin{align*}
	\hspace{1em}&\hspace{-1em}\|\bm p -\bm p_h(u)\|_{\mathcal T_h} + \|z -z_h(u)\|_{\mathcal T_h} \\
	&\le C \|\Pi_h^{k+1} y - y\|_{\mathcal T_h} + Ch^{1/2}(\|\Pi_h^{k+1} y - y\|_{\partial \mathcal T_h} +\|\mathcal I_h^{k+1} y - y\|_{\partial \mathcal T_h} )\\
	&\quad  + Ch^2 \|\nabla\cdot\bm q\|_{\mathcal T_h} + Ch\| \bm q - \bm{\Pi}_h^0 \bm q\|_{\mathcal T_h}+   Ch^{1/2}\|\bm p - \bm \Pi_h^k \bm p\|_{\partial\mathcal T_h}\\
	&\quad +  C\|\bm p - \bm \Pi_h^k \bm p\|_{\mathcal T_h} +C h^{-1/2} (\|\Pi_h^{k+1} z - z\|_{\partial \mathcal T_h} + \|\mathcal I_h^{k+1} z- z\|_{\partial \mathcal T_h}).
	\end{align*}
\end{lemma}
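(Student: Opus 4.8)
The plan is to read off the bound directly from the triangle inequality combined with \Cref{lemma:step5_main_lemma}, mirroring exactly the passage from \Cref{energy_norm_q} and \Cref{dual_y} to \Cref{lemma:step3_conv_rates}. Recall from \Cref{step4:error_equation} that $\varepsilon_h^{\bm p}=\bm\Pi_h^k\bm p-\bm p_h(u)$ and $\varepsilon_h^z=\Pi_h^{k+1}z-z_h(u)$. Inserting the projections $\bm\Pi_h^k\bm p$ and $\Pi_h^{k+1}z$ and splitting gives
\begin{align*}
\|\bm p-\bm p_h(u)\|_{\mathcal T_h}&\le \|\bm p-\bm\Pi_h^k\bm p\|_{\mathcal T_h}+\|\varepsilon_h^{\bm p}\|_{\mathcal T_h},\\
\|z-z_h(u)\|_{\mathcal T_h}&\le \|z-\Pi_h^{k+1}z\|_{\mathcal T_h}+\|\varepsilon_h^z\|_{\mathcal T_h},
\end{align*}
so that, after summing, the task reduces to controlling $\|\varepsilon_h^{\bm p}\|_{\mathcal T_h}+\|\varepsilon_h^z\|_{\mathcal T_h}$ together with the two projection errors.

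Next I would invoke \Cref{lemma:step5_main_lemma}, whose left-hand side dominates $\|\varepsilon_h^{\bm p}\|_{\mathcal T_h}+\|\varepsilon_h^z\|_{\mathcal T_h}$ and whose right-hand side is already precisely the bound claimed here. The projection error $\|\bm p-\bm\Pi_h^k\bm p\|_{\mathcal T_h}$ is itself one of the terms appearing on that right-hand side, so it is absorbed at no cost. For the remaining term $\|z-\Pi_h^{k+1}z\|_{\mathcal T_h}$, I would note that the standard bound \eqref{classical_ine} (which holds for $z$ as well) gives $\|z-\Pi_h^{k+1}z\|_{\mathcal T_h}\le Ch^{s_z}\|z\|_{s_z}$; this is of higher order than the boundary contribution $Ch^{-1/2}\|\Pi_h^{k+1}z-z\|_{\partial\mathcal T_h}$ already retained on the right-hand side, which scales like $h^{s_z-1}\|z\|_{s_z}$. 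Hence $\|z-\Pi_h^{k+1}z\|_{\mathcal T_h}$ does not enlarge the stated estimate and may simply be absorbed.

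Because every ingredient is already in hand, there is essentially no analytic obstacle here: the substantive work — the energy identity of \Cref{property_B}, the discrete Poincar\'e inequality \eqref{poin_in}, and the reuse of \eqref{nablay} with $(\bm p,z,\widehat z)$ in place of $(\bm q,y,\widehat y)$ — has all been done in proving \Cref{lemma:step5_main_lemma}, and this statement merely repackages that result in terms of the true errors rather than the projected errors. The only care required is the bookkeeping step of checking that each projection error generated by the triangle inequality is dominated by a term already present on the right-hand side of \Cref{lemma:step5_main_lemma}, so that the final bound has exactly the advertised form.
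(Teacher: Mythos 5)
Your proposal is correct and follows exactly the paper's (unwritten) argument: the paper simply states that the lemma follows from \Cref{lemma:step5_main_lemma} by the triangle inequality, which is precisely your decomposition into $\|\bm p-\bm\Pi_h^k\bm p\|_{\mathcal T_h}+\|\varepsilon_h^{\bm p}\|_{\mathcal T_h}$ and $\|z-\Pi_h^{k+1}z\|_{\mathcal T_h}+\|\varepsilon_h^z\|_{\mathcal T_h}$. Your explicit remark that $\|z-\Pi_h^{k+1}z\|_{\mathcal T_h}$ is of higher order than the retained term $Ch^{-1/2}\|\Pi_h^{k+1}z-z\|_{\partial\mathcal T_h}$ (both measured against $h^{s_z}\|z\|_{s_z}$ via \eqref{classical_ine}) is a bookkeeping point the paper glosses over, and it is the right justification for why that term does not appear on the stated right-hand side.
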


\subsubsection{Step 6: Estimate for $\|u-u_h\|_{\mathcal E_h^\partial}$, $\norm {y-y_h}_{\mathcal T_h}$ and $\norm {z-z_h}_{\mathcal T_h}$.}

Next, we bound the error between the solutions of the auxiliary problem \eqref{HDG_inter_u} and the discretization of the optimality system \eqref{IEDG_full_discrete}. This step and the next step  are very similar to Steps 6 and 7 in our previous works \cite{HuMateosSinglerZhangZhang1,HuMateosSinglerZhangZhang2}. We include these proofs here to make this paper self-contained.

For the remaining steps, we denote 
\begin{gather}\label{zetaqypz}
\begin{split}
\zeta_{\bm q} =\bm q_h(u)-\bm q_h, \  \zeta_{y} = y_h(u)-y_h,  \ \zeta_{\widehat y} = \widehat y_h^o(u)-\widehat y_h^o,\\
\zeta_{\bm p} =\bm p_h(u)-\bm p_h,\ \zeta_{z} = z_h(u)-z_h, \ \zeta_{\widehat z} = \widehat z_h^o(u)-\widehat z_h^o.
\end{split}
\end{gather}
Subtracting the auxiliary problem \eqref{HDG_inter_u} and the  system \eqref{IEDG_full_discrete}  gives the following error equations
\begin{subequations}\label{eq_yh}
	\begin{align}
	\mathscr B_1(\zeta_{\bm q},\zeta_y,\zeta_{\widehat y};\bm r_1, w_1,\mu_1) &= -\langle u-u_h, \bm{r}_1\cdot \bm{n} -( h^{-1}+\tau_1 - \bm \beta\cdot\bm n) w_1 \rangle_{{\mathcal E_h^{\partial}}}\label{eq_yh_yhu},\\
	\mathscr B_2(\zeta_{\bm p},\zeta_z,\zeta_{\widehat z};\bm r_2, w_2,\mu_2) &= (\zeta_y, w_2)_{\mathcal T_h}\label{eq_zh_zhu},
	\end{align}
\end{subequations}
for all $\left(\bm{r}_1, \bm{r}_2, w_1, w_2, \mu_1, \mu_2\right)\in \bm V_h\times\bm V_h\times W_h\times W_h\times {M}_h(o)\times {M}_h(o)$.
\begin{lemma}
	Let $(\bm p_h(u), z_h(u))$ be the solution of \eqref{HDG_inter_u}, $\zeta_y$ be defined as in \eqref{zetaqypz},  and $u$ and $u_h$ be the solutions of \eqref{mixed} and \eqref{IEDG_full_discrete}, respectively.  We have
	\begin{align*}
	\gamma \norm{u-u_h}_{\mathcal E_h^\partial}^2  + \norm {\zeta_y}_{\mathcal T_h}^2 &= \langle \gamma u+ \bm p_h(u)\cdot\bm n +(h^{-1}+\tau_2)   z_h(u),u-u_h\rangle_{\mathcal E_h^\partial}\\
	& \quad - \langle \gamma u_h+ \bm p_h\cdot\bm n +(h^{-1}+\tau_2)   z_h,u-u_h\rangle_{\mathcal E_h^\partial}.
	\end{align*}
\end{lemma}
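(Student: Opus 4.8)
The plan is to reduce the asserted identity to a single energy relation and then verify that relation via the structural identity of \Cref{identical_equa}. First I would regroup the right-hand side: recalling from \eqref{zetaqypz} that $\zeta_{\bm p} = \bm p_h(u) - \bm p_h$ and $\zeta_z = z_h(u) - z_h$, the two boundary pairings combine into
\[
\langle \gamma(u-u_h) + \zeta_{\bm p}\cdot\bm n + (h^{-1}+\tau_2)\zeta_z, u - u_h\rangle_{\mathcal E_h^\partial} = \gamma\norm{u-u_h}_{\mathcal E_h^\partial}^2 + \langle \zeta_{\bm p}\cdot\bm n + (h^{-1}+\tau_2)\zeta_z, u - u_h\rangle_{\mathcal E_h^\partial}.
\]
After cancelling $\gamma\norm{u-u_h}_{\mathcal E_h^\partial}^2$, the lemma is therefore equivalent to the single identity
\[
\norm{\zeta_y}_{\mathcal T_h}^2 = \langle \zeta_{\bm p}\cdot\bm n + (h^{-1}+\tau_2)\zeta_z, u - u_h\rangle_{\mathcal E_h^\partial}.
\]

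To establish this reduced identity, I would invoke \Cref{identical_equa} with $(\bm v_1, w_1, \mu_1) = (\zeta_{\bm q}, \zeta_y, \zeta_{\widehat y})$ and $(\bm v_2, w_2, \mu_2) = (\zeta_{\bm p}, \zeta_z, \zeta_{\widehat z})$, giving
\[
\mathscr B_1(\zeta_{\bm q}, \zeta_y, \zeta_{\widehat y}; \zeta_{\bm p}, -\zeta_z, -\zeta_{\widehat z}) + \mathscr B_2(\zeta_{\bm p}, \zeta_z, \zeta_{\widehat z}; -\zeta_{\bm q}, \zeta_y, \zeta_{\widehat y}) = 0.
\]
The two terms here are precisely left-hand sides of the error equations \eqref{eq_yh_yhu} and \eqref{eq_zh_zhu} evaluated at admissible test functions. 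For the first term I would set $(\bm r_1, w_1, \mu_1) = (\zeta_{\bm p}, -\zeta_z, -\zeta_{\widehat z})$ in \eqref{eq_yh_yhu}, producing $-\langle u-u_h, \zeta_{\bm p}\cdot\bm n + (h^{-1}+\tau_1 - \bm\beta\cdot\bm n)\zeta_z\rangle_{\mathcal E_h^\partial}$; for the second I would set $(\bm r_2, w_2, \mu_2) = (-\zeta_{\bm q}, \zeta_y, \zeta_{\widehat y})$ in \eqref{eq_zh_zhu}, whose right-hand side collapses to $(\zeta_y, \zeta_y)_{\mathcal T_h} = \norm{\zeta_y}_{\mathcal T_h}^2$.

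Substituting both into the relation from \Cref{identical_equa} and invoking the stabilization condition \eqref{tau1tau2}, namely $\tau_2 = \tau_1 - \bm\beta\cdot\bm n$, to rewrite $h^{-1}+\tau_1-\bm\beta\cdot\bm n = h^{-1}+\tau_2$, yields exactly the reduced identity, and hence the lemma. Since the whole argument is an exact algebraic cancellation rather than an estimate, there is no analytic obstacle; the only point demanding care is the sign bookkeeping of the test functions fed into \Cref{identical_equa}, together with checking that $(\zeta_{\bm p}, -\zeta_z, -\zeta_{\widehat z})$ and $(-\zeta_{\bm q}, \zeta_y, \zeta_{\widehat y})$ are genuine members of $\bm V_h\times W_h\times M_h(o)$ — which holds because each $\zeta$ is a difference of two functions lying in the same discrete space.
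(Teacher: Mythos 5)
Your proposal is correct and follows essentially the same route as the paper: regroup the two boundary pairings into $\gamma\norm{u-u_h}_{\mathcal E_h^\partial}^2+\langle \zeta_{\bm p}\cdot\bm n+(h^{-1}+\tau_2)\zeta_z,u-u_h\rangle_{\mathcal E_h^\partial}$, then apply \Cref{identical_equa} with the test functions $(\zeta_{\bm p},-\zeta_z,-\zeta_{\widehat z})$ and $(-\zeta_{\bm q},\zeta_y,\zeta_{\widehat y})$ in the two error equations \eqref{eq_yh_yhu} and \eqref{eq_zh_zhu}, using $\tau_2=\tau_1-\bm\beta\cdot\bm n$ to identify the boundary terms. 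The sign bookkeeping and the membership of the test functions in the discrete spaces are handled correctly.
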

\begin{proof}
	First, we have
	\begin{align*}
	& \langle \gamma u+\bm p_h(u)\cdot\bm n +( h^{-1}+ \tau_2) z_h(u),u-u_h\rangle_{\mathcal E_h^\partial}\\
	& \quad - \langle \gamma u_h+ \bm p_h\cdot\bm n + (h^{-1}+\tau_2) z_h,u-u_h\rangle_{\mathcal E_h^\partial}\\
	&= \gamma \norm{u-u_h}_{\mathcal E_h^\partial}^2 +  \langle \zeta_{\bm p}\cdot\bm n+(h^{-1}+\tau_2)  \zeta_z, u-u_h\rangle_{\mathcal E_h^\partial}.
	\end{align*}
	Next, \Cref{identical_equa} gives
	\begin{align*}
	\mathscr B_1(\zeta_{\bm q},\zeta_y,\zeta_{\widehat y};\zeta_{\bm p}, -\zeta_{z},-\zeta_{\widehat{ z}}) + \mathscr B_2(\zeta_{\bm p},\zeta_z,\zeta_{\widehat z}; -\zeta_{\bm q}, \zeta_{y},\zeta_{\widehat{y}}) = 0.
	\end{align*}
	One the other hand, we have
	\begin{align*}
	\hspace{1em}&\hspace{-1em}\mathscr B_1(\zeta_{\bm q},\zeta_y,\zeta_{\widehat y};\zeta_{\bm p}, -\zeta_{z},-\zeta_{\widehat{ z}}) + \mathscr B_2(\zeta_{\bm p},\zeta_z,\zeta_{\widehat z}; -\zeta_{\bm q}, \zeta_{y},\zeta_{\widehat{y}})\\
	&=  (\zeta_{ y},\zeta_{ y})_{\mathcal{T}_h} - \langle  u  -u_h, \zeta_{\bm p}\cdot \bm{n} + (h^{-1} + \tau_2)\zeta_z \rangle_{{\mathcal E_h^{\partial}}}.
	\end{align*}
	Comparing the above two equalities gives
	\begin{align*}
	(\zeta_{ y},\zeta_{ y})_{\mathcal{T}_h} = \langle u-u_h, \zeta_{\bm p}\cdot \bm{n} +(h^{-1}+\tau_2)  \zeta_z \rangle_{{\mathcal E_h^{\partial}}}.
	\end{align*}
\end{proof}

\begin{lemma}\label{erroruh}
	Let $(u,y)$ and $(u_h,y_h)$ be the solutions of \eqref{mixed} and \eqref{IEDG_full_discrete}, respectively. We have
	\begin{align*}
	\hspace{1em}&\hspace{-1em}\norm{u-u_h}_{\mathcal E_h^\partial} + \|y - y_h\|_{\mathcal T_h}\\
	&\le  C h^{-1/2}\|\Pi_h^{k+1} y - y\|_{\mathcal T_h} + C\|\Pi_h^{k+1} y - y\|_{\partial \mathcal T_h} +C\|\mathcal I_h^{k+1} y - y\|_{\partial \mathcal T_h} \\
	&\quad  + Ch^{3/2} \|\nabla\cdot\bm q\|_{\mathcal T_h} + Ch^{1/2}\| \bm q - \bm{\Pi}_h^0 \bm q\|_{\mathcal T_h}+ C h^{-1/2}\|\bm p - \bm \Pi_h^k \bm p\|_{\mathcal T_h} \\
	&\quad+Ch^{- 3/2}\|z-\Pi_h^{k+1} z\|_{\mathcal T_h} +C  \| \bm p - \bm \Pi_h^{k} \bm p\|_{\partial \mathcal T_h}\\
	&\quad + Ch^{-1} (\|\Pi_h^{k+1} z - z\|_{\partial \mathcal T_h} + \|\mathcal I_h^{k+1}z- z\|_{\partial \mathcal T_h}).
	\end{align*}
\end{lemma}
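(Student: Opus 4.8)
The plan is to start from the energy identity established in the preceding lemma, namely
\[
\gamma \norm{u-u_h}_{\mathcal E_h^\partial}^2 + \norm{\zeta_y}_{\mathcal T_h}^2 = A - B,
\]
where $A = \langle \gamma u + \bm p_h(u)\cdot\bm n + (h^{-1}+\tau_2)z_h(u), u-u_h\rangle_{\mathcal E_h^\partial}$ and $B = \langle \gamma u_h + \bm p_h\cdot\bm n + (h^{-1}+\tau_2)z_h, u-u_h\rangle_{\mathcal E_h^\partial}$, and to bound $A$ and $B$ separately. It is essential to use the two optimality conditions here: expanding $A-B$ directly merely reproduces the identity, so the continuous and discrete optimality conditions are what actually turn the identity into an estimate.

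First I would rewrite $A$ using the continuous optimality condition \eqref{mixed_e}, which gives $\gamma u = -\bm p\cdot\bm n$ on $\Gamma$, together with $z=0$ on $\partial\Omega$ from \eqref{eq_adeq_d}. This collapses $A$ into a pairing of the boundary errors of the adjoint variables against $u-u_h$,
\[
A = \langle (\bm p_h(u)-\bm p)\cdot\bm n + (h^{-1}+\tau_2)(z_h(u)-z), u-u_h\rangle_{\mathcal E_h^\partial}.
\]
I would then split $\bm p_h(u)-\bm p = -\varepsilon_h^{\bm p} + (\bm \Pi_h^k\bm p - \bm p)$ and $z_h(u)-z = -\varepsilon_h^z + (\Pi_h^{k+1}z - z)$ and estimate each pairing by Cauchy--Schwarz. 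The delicate point is the weight $h^{-1}$: a crude trace inequality would cost a full power of $h$, so instead I would invoke the sharp boundary estimates of \Cref{lemma:step5_main_lemma} (which control $h^{-1/2}\norm{\varepsilon_h^z}_{\mathcal E_h^\partial}$ and $\norm{\varepsilon_h^{\bm p}}_{\mathcal T_h}$ directly), the projection bounds \eqref{classical_ine}, and an inverse inequality $\norm{\varepsilon_h^{\bm p}}_{\mathcal E_h^\partial}\le Ch^{-1/2}\norm{\varepsilon_h^{\bm p}}_{\mathcal T_h}$.

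For $B$ I would exploit the discrete optimality condition \eqref{IEDG_full_discrete_c}. Since $u_h\in M_h(\partial)$, testing with $\mu_3=u_h$ gives $\langle \gamma u_h + \bm p_h\cdot\bm n + (h^{-1}+\tau_2)z_h, u_h\rangle=0$, hence $B = \langle \gamma u_h + \bm p_h\cdot\bm n + (h^{-1}+\tau_2)z_h, u\rangle$; writing $u=(u-P_h u)+P_h u$, with $P_h u$ the $L^2(\mathcal E_h^\partial)$-projection of $u$ onto $M_h(\partial)$, and applying \eqref{IEDG_full_discrete_c} once more removes the $P_h u$ piece, leaving $B = \langle \gamma u_h + \bm p_h\cdot\bm n + (h^{-1}+\tau_2)z_h, u-P_h u\rangle$. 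For the IEDG method the traces $\bm p_h\cdot\bm n$ (degree $k$) and $z_h$ (degree $k+1$) both lie in $M_h(\partial)$ on $\mathcal E_h^\partial$, and $\gamma u_h\in M_h(\partial)$, so $B=0$ outright; for the EDG method the continuity of $M_h(\partial)$ leaves a remainder, which I would control by the control projection error $\norm{u-P_h u}_{\mathcal E_h^\partial}$ (equivalently, through $\gamma u=-\bm p\cdot\bm n$, by the trace of $\bm p$).

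Finally I would collect the bounds on $A$ and $B$, apply Young's inequality to absorb $\gamma\norm{u-u_h}_{\mathcal E_h^\partial}^2$ into the left-hand side, and take square roots to obtain the estimates for $\norm{u-u_h}_{\mathcal E_h^\partial}$ and $\norm{\zeta_y}_{\mathcal T_h}$; the negative powers of $h$ in the statement are precisely what the $h^{-1}$-weighted boundary terms produce after this absorption. The bound for $\norm{y-y_h}_{\mathcal T_h}$ then follows from the triangle inequality $\norm{y-y_h}_{\mathcal T_h}\le\norm{y-y_h(u)}_{\mathcal T_h}+\norm{\zeta_y}_{\mathcal T_h}$ together with the Step~3 estimate \Cref{lemma:step3_conv_rates}. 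I expect the handling of $B$ to be the main obstacle: extracting the right orthogonality from the discrete optimality condition in a manner valid for both the continuous (EDG) and discontinuous (IEDG) choices of $M_h(\partial)$, while simultaneously keeping the $h^{-1}$-weighted boundary contributions from degrading the convergence order, is the crux of the argument.
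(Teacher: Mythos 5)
Your proposal follows the paper's proof almost step for step: both start from the energy identity $\gamma\norm{u-u_h}_{\mathcal E_h^\partial}^2+\norm{\zeta_y}_{\mathcal T_h}^2=A-B$ of the preceding lemma, rewrite $A$ using $\gamma u=-\bm p\cdot\bm n$ and $z=0$ on $\Gamma$ so that only $(\bm p_h(u)-\bm p)\cdot\bm n$ and $(h^{-1}+\tau_2)z_h(u)$ survive, split these into $\varepsilon_h^{\bm p}$, $\varepsilon_h^{z}$ plus projection errors, invoke \Cref{lemma:step5_main_lemma} together with the inverse/trace bounds in \eqref{classical_ine}, absorb via Young's inequality, and finish with the triangle inequality and \Cref{lemma:step3_conv_rates}. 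The one place you genuinely diverge is the term $B$: the paper disposes of it in one line by reading the discrete optimality condition \eqref{IEDG_full_discrete_c} as the pointwise identity $\gamma u_h+\bm p_h\cdot\bm n+(h^{-1}+\tau_2)z_h=0$ on $\mathcal E_h^\partial$, so that $B=0$ for both methods, whereas you keep \eqref{IEDG_full_discrete_c} variational, test with $u_h$ and with $P_hu$, and conclude $B=0$ only for IEDG, leaving for EDG the remainder $\langle\gamma u_h+\bm p_h\cdot\bm n+(h^{-1}+\tau_2)z_h,\,u-P_hu\rangle_{\mathcal E_h^\partial}$. Your caution points at a real subtlety the paper glosses over --- for EDG the trace space $M_h(\partial)$ is continuous, so orthogonality to it does not force the integrand to vanish pointwise --- but your EDG branch is not closed: estimating that remainder requires a bound on $\norm{\gamma u_h+\bm p_h\cdot\bm n+(h^{-1}+\tau_2)z_h}_{\mathcal E_h^\partial}$, which after inserting $\gamma u+\bm p\cdot\bm n=0$ drags in $\zeta_{\bm p}$ and $\zeta_z$, quantities estimated only in Step 7 in terms of the output of this very lemma; you would need an explicit absorption or bootstrapping argument (exploiting the smallness of $\norm{u-P_hu}_{\mathcal E_h^\partial}$) to avoid circularity, and you do not supply it. If you instead adopt the paper's pointwise reading of \eqref{IEDG_full_discrete_c} --- rigorous for IEDG when $\tau_2$ restricted to each boundary face is polynomial of degree at most $k+1$ --- the remainder of your argument reproduces the stated bound exactly as in the paper.
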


\begin{proof}
	%
	The optimality conditions yield $\gamma u+\bm p \cdot\bm n=0$ and $\gamma u_h+\bm p_h\cdot\bm n + h^{-1} z_h +\tau_2 z_h=0$ on $\mathcal E_h^{\partial}$.  Therefore, the above lemma gives
	\begin{align*}
	\gamma\norm{u-u_h}_{\mathcal E_h^\partial}^2  + \norm {\zeta_y}_{\mathcal T_h}^2 &= \langle \gamma u+\bm p_h(u)\cdot\bm n + h^{-1} z_h(u) + \tau_2 z_h(u),u-u_h\rangle_{\mathcal E_h^\partial}\\
	&=\langle (\bm p_h(u)-\bm p)\cdot\bm n + h^{-1} z_h(u)+ \tau_2 z_h(u) ,u-u_h\rangle_{\mathcal E_h^\partial}.
	\end{align*}
	Since $z=0$ on $\mathcal E_h^{\partial}$, we have
	\begin{align*}
	\norm {\bm p_h(u)-\bm p}_{\partial \mathcal T_h} &\le \norm {\bm p_h(u)-\bm{\Pi}_h^k\bm p}_{\partial \mathcal T_h} +\norm {\bm{\Pi}_h^k\bm p - \bm p}_{\partial \mathcal T_h}\\
	& \le C  h^{-\frac 1 2}\norm {\varepsilon_h^{\bm p}}_{\mathcal T_h} +C\norm {\bm{\Pi}_h^k\bm p - \bm p}_{\partial \mathcal T_h},\\
	\|z_h(u)\|_{\mathcal E_h^\partial} &=\|z_h(u) -\Pi_h^{k+1} z +\Pi_h^{k+1} z - z  \|_{\mathcal E_h^\partial} \\
	&\le   \|\varepsilon_h^z \|_{\mathcal E_h^\partial} + \|\Pi_h^{k+1} z - z\|_{\partial\mathcal T_h}.
	\end{align*}
	\Cref{lemma:step5_main_lemma} implies
	\begin{align*}
	\hspace{1em}&\hspace{-1em}\norm{u-u_h}_{\mathcal E_h^\partial}  + \|\zeta_y\|_{\mathcal T_h}\\
	&\le  C h^{-1/2}\|\Pi_h^{k+1} y - y\|_{\mathcal T_h} + C\|\Pi_h^{k+1} y - y\|_{\partial \mathcal T_h} +C\|\mathcal I_h^{k+1} y - y\|_{\partial \mathcal T_h} \\
	&\quad  + Ch^{3/2} \|\nabla\cdot\bm q\|_{\mathcal T_h} + Ch^{1/2}\| \bm q - \bm{\Pi}_h^0 \bm q\|_{\mathcal T_h}+ C h^{-1/2}\|\bm p - \bm \Pi_h^k \bm p\|_{\mathcal T_h} \\
	&\quad+Ch^{- 3/2}\|z-\Pi_h^{k+1} z\|_{\mathcal T_h} +C  \| \bm p - \bm \Pi_h^{k} \bm p\|_{\partial \mathcal T_h}\\
	&\quad + Ch^{-1} (\|\Pi_h^{k+1} z - z\|_{\partial \mathcal T_h} + \|\mathcal I_h^{k+1}z- z\|_{\partial \mathcal T_h}).
	\end{align*}
	The triangle inequality and \Cref{lemma:step3_conv_rates} yield the desired result.
\end{proof}

\subsubsection{Step 7: Estimates for $\|\boldmath p-\boldmath p_h\|_{\mathcal T_h}$,  $\|z-z_h\|_{\mathcal T_h}$, and $\|\boldmath q - \boldmath q_h\|_{\mathcal T_h}$}
\begin{lemma}
	For $(\zeta_{ \bm p},\zeta_z)$ defined in \eqref{zetaqypz}, we have
	\begin{align*}
	\hspace{1em}&\hspace{-1em}
	\norm {\zeta_{\bm p}}_{\mathcal T_h}  +\|\zeta_z\|_{\mathcal T_h}\\
	&\le  C h^{-1/2}\|\Pi_h^{k+1} y - y\|_{\mathcal T_h} + C\|\Pi_h^{k+1} y - y\|_{\partial \mathcal T_h} +C\|\mathcal I_h^{k+1} y - y\|_{\partial \mathcal T_h} \\
	&\quad  + Ch^{3/2} \|\nabla\cdot\bm q\|_{\mathcal T_h} + Ch^{1/2}\| \bm q - \bm{\Pi}_h^0 \bm q\|_{\mathcal T_h}+ C h^{-1/2}\|\bm p - \bm \Pi_h^k \bm p\|_{\mathcal T_h} \\
	&\quad+Ch^{- 3/2}\|z-\Pi_h^{k+1} z\|_{\mathcal T_h} +C  \| \bm p - \bm \Pi_h^{k} \bm p\|_{\partial \mathcal T_h}\\
	&\quad + Ch^{-1} (\|\Pi_h^{k+1} z - z\|_{\partial \mathcal T_h} + \|\mathcal I_h^{k+1}z- z\|_{\partial \mathcal T_h}).
	\end{align*}
\end{lemma}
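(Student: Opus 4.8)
The plan is to exploit the second error equation \eqref{eq_zh_zhu}, namely $\mathscr B_2(\zeta_{\bm p},\zeta_z,\zeta_{\widehat z};\bm r_2,w_2,\mu_2)=(\zeta_y,w_2)_{\mathcal T_h}$, and to reduce the entire estimate to a bound on $\norm{\zeta_y}_{\mathcal T_h}$. First I would run an energy argument exactly as in \Cref{lemma:step5_main_lemma}: take the test function $(\bm r_2,w_2,\mu_2)=(\zeta_{\bm p},\zeta_z,\zeta_{\widehat z})$ in \eqref{eq_zh_zhu} and apply the coercivity identity of \Cref{property_B}. Using $\nabla\cdot\bm\beta\le 0$ and the stabilization condition \eqref{tau1tau2}, the left-hand side controls $\norm{\zeta_{\bm p}}_{\mathcal T_h}^2+h^{-1}\norm{\zeta_z-\zeta_{\widehat z}}_{\partial\mathcal T_h\backslash\mathcal E_h^\partial}^2+h^{-1}\norm{\zeta_z}_{\mathcal E_h^\partial}^2$ from below, while the right-hand side is $(\zeta_y,\zeta_z)_{\mathcal T_h}\le\norm{\zeta_y}_{\mathcal T_h}\norm{\zeta_z}_{\mathcal T_h}$.

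Next I would control the factor $\norm{\zeta_z}_{\mathcal T_h}$ that appears on the right. Taking $(\bm r_2,w_2,\mu_2)=(\nabla\zeta_z,0,0)$ in \eqref{eq_zh_zhu}, whose right-hand side vanishes since $w_2=0$, and integrating by parts yields the gradient bound $\norm{\nabla\zeta_z}_{\mathcal T_h}\le C(\norm{\zeta_{\bm p}}_{\mathcal T_h}+h^{-1/2}\norm{\zeta_z-\zeta_{\widehat z}}_{\partial\mathcal T_h\backslash\mathcal E_h^\partial}+h^{-1/2}\norm{\zeta_z}_{\mathcal E_h^\partial})$, the analogue of \eqref{nablay}. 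Feeding this into the discrete Poincar\'e inequality \eqref{poin_in} gives $\norm{\zeta_z}_{\mathcal T_h}\le CE$, where $E^2:=\norm{\zeta_{\bm p}}_{\mathcal T_h}^2+h^{-1}\norm{\zeta_z-\zeta_{\widehat z}}_{\partial\mathcal T_h\backslash\mathcal E_h^\partial}^2+h^{-1}\norm{\zeta_z}_{\mathcal E_h^\partial}^2$ is precisely the energy quantity from the first step.

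Combining the two steps, the energy identity becomes $E^2\le C\norm{\zeta_y}_{\mathcal T_h}\norm{\zeta_z}_{\mathcal T_h}\le C\norm{\zeta_y}_{\mathcal T_h}E$, so $E\le C\norm{\zeta_y}_{\mathcal T_h}$ and therefore $\norm{\zeta_{\bm p}}_{\mathcal T_h}+\norm{\zeta_z}_{\mathcal T_h}\le C\norm{\zeta_y}_{\mathcal T_h}$. It then remains only to insert a bound on $\norm{\zeta_y}_{\mathcal T_h}$: the energy identity established just before \Cref{erroruh} gives $\gamma\norm{u-u_h}_{\mathcal E_h^\partial}^2+\norm{\zeta_y}_{\mathcal T_h}^2$ equal to boundary terms that, in the proof of \Cref{erroruh}, are shown to be dominated by the square of the displayed right-hand side; hence $\norm{\zeta_y}_{\mathcal T_h}$ is bounded by that same right-hand side, and the claimed estimate follows.

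The main obstacle is closing the self-referential loop in the first two steps: the energy bound only yields $E^2\le C\norm{\zeta_y}_{\mathcal T_h}E$ after one has independently controlled $\norm{\zeta_z}_{\mathcal T_h}$ by $E$, which is exactly why the auxiliary gradient estimate together with the discrete Poincar\'e inequality \eqref{poin_in} is essential: without it the factor $\norm{\zeta_z}_{\mathcal T_h}$ on the right cannot be absorbed into the left-hand side. Everything else (the Cauchy--Schwarz and Young inequalities, the trace and inverse inequalities) is routine and parallels \Cref{lemma:step5_main_lemma} and \Cref{erroruh}.
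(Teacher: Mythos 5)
Your proposal is correct and follows essentially the same route as the paper: test \eqref{eq_zh_zhu} with $(\zeta_{\bm p},\zeta_z,\zeta_{\widehat z})$, use the energy identity of \Cref{property_B}, control $\norm{\zeta_z}_{\mathcal T_h}$ via the gradient bound plus the discrete Poincar\'e inequality \eqref{poin_in} so the factor can be absorbed, and then invoke the bound on $\norm{\zeta_y}_{\mathcal T_h}$ from the proof of \Cref{erroruh}. The only (harmless) cosmetic difference is that you derive the gradient bound for $\zeta_z$ directly from \eqref{eq_zh_zhu} with test function $(\nabla\zeta_z,0,0)$, whose right-hand side vanishes, whereas the paper cites \eqref{nablay} and so carries an extra interpolation term that is absorbed into the final right-hand side anyway.
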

\begin{proof}
	By the energy identity for $ \mathscr B_2 $ in \Cref{property_B}, and the second error equation \eqref{eq_zh_zhu}, we have
	\begin{align*}
	\hspace{1em}&\hspace{-1em}\mathscr B_2(\zeta_{\bm p},\zeta_z,\zeta_{\widehat z};\zeta_{\bm p},\zeta_z,\zeta_{\widehat z})\\
	&=\varepsilon^{-1}(\zeta_{\bm p},\zeta_{\bm p})_{\mathcal T_h}+ \langle (h^{-1}+\tau_2 + \frac 12 \bm \beta\cdot\bm n)(\zeta_z-\zeta_{\widehat z}),\zeta_{ z}-\zeta_{\widehat z}\rangle_{\partial\mathcal T_h\backslash \mathcal E_h^\partial}\\
	&\quad-\frac 1 2(\nabla\cdot\bm\beta \zeta_z,\zeta_z)_{\mathcal T_h} +\langle (h^{-1}+\tau_2+\frac12\bm \beta\cdot\bm n) \zeta_z,\zeta_z\rangle_{\mathcal E_h^\partial}\\
	&=(\zeta_y,\zeta_z)_{\mathcal T_h}\\
	&\le \norm{\zeta_y}_{\mathcal T_h} \norm{\zeta_z}_{\mathcal T_h}\\
	&\le C  \norm{\zeta_y}_{\mathcal T_h} (\|\nabla \zeta_z\|_{\mathcal T_h} + h^{-\frac 1 2} \|\zeta_z - \zeta_{\widehat z}\|_{\partial\mathcal T_h\backslash \mathcal E_h^\partial} +  h^{-\frac 1 2} \|\zeta_z\|_{\mathcal E_h^\partial}) \\
	& \le C  \norm{\zeta_y}_{\mathcal T_h}
	(\|\zeta_{\bm p}\|_{\mathcal T_h}+ h^{-\frac 1 2} \|\zeta_z - \zeta_{\widehat z}\|_{\partial\mathcal T_h\backslash \mathcal E_h^\partial} +  h^{-\frac 1 2} \|\zeta_z\|_{\mathcal E_h^\partial}+h^{-\frac 1 2} \|\mathcal I_h^{k+1} z - z\|_{\partial \mathcal T_h}),
	\end{align*}
	where for the last two inequalities we used the discrete Poincar{\'e} inequality  \eqref{poin_in} and also the inequality \eqref{nablay}.  This gives
	\begin{align}\label{zetapzinequality}
	\begin{split}
	\hspace{1em}&\hspace{-1em} \norm {\zeta_{\bm p}}_{\mathcal T_h} +h^{-\frac1 2}\|\zeta_z-\zeta_{\widehat z}\|_{\partial\mathcal T_h\backslash \mathcal E_h^\partial} +h^{-\frac1 2}\|\zeta_z\|_{\mathcal E_h^\partial}\\
	&\le  C h^{-1/2}\|\Pi_h^{k+1} y - y\|_{\mathcal T_h} + C\|\Pi_h^{k+1} y - y\|_{\partial \mathcal T_h} +C\|\mathcal I_h^{k+1} y - y\|_{\partial \mathcal T_h} \\
	&\quad  + Ch^{3/2} \|\nabla\cdot\bm q\|_{\mathcal T_h} + Ch^{1/2}\| \bm q - \bm{\Pi}_h^0 \bm q\|_{\mathcal T_h}+ C h^{-1/2}\|\bm p - \bm \Pi_h^k \bm p\|_{\mathcal T_h} \\
	&\quad+Ch^{- 3/2}\|z-\Pi_h^{k+1} z\|_{\mathcal T_h} +C  \| \bm p - \bm \Pi_h^{k} \bm p\|_{\partial \mathcal T_h}\\
	&\quad + Ch^{-1} (\|\Pi_h^{k+1} z - z\|_{\partial \mathcal T_h} + \|\mathcal I_h^{k+1}z- z\|_{\partial \mathcal T_h}).
	\end{split}
	\end{align}
	Using the discrete Poincar{\'e} inequality \eqref{poin_in} and \eqref{nablay} again yield
	\begin{align*}
	\|\zeta_z\|_{\mathcal T_h} & \le C( \|\nabla \zeta_z\|_{\mathcal T_h}+ h^{-\frac1 2}\|\zeta_z-\zeta_{\widehat z}\|_{\partial\mathcal T_h\backslash \mathcal E_h^\partial} +h^{-\frac1 2}\|\zeta_z\|_{\mathcal E_h^\partial})\\
	&\le C(\|\zeta_{\bm p}\|_{\mathcal T_h}+ h^{-\frac 1 2} \|\zeta_z - \zeta_{\widehat z}\|_{\partial\mathcal T_h\backslash \mathcal E_h^\partial} +  h^{-\frac 1 2} \|\zeta_z\|_{\mathcal E_h^\partial}+h^{-\frac 1 2} \|\mathcal I_h^{k+1} z - z\|_{\partial \mathcal T_h}).
	\end{align*}
	Finally, combine \eqref{zetapzinequality} and the above inequality to  give the desired result.
\end{proof}
\begin{lemma}
	If  $k\geq 1$, then
	\begin{align*}
	\norm {\zeta_{\bm q}}_{\mathcal T_h}  &\le  C h^{-1}\|\Pi_h^{k+1} y - y\|_{\mathcal T_h} + C h^{-1/2}\|\Pi_h^{k+1} y - y\|_{\partial \mathcal T_h}  \\
	&\quad  +C h^{-1/2}\|\mathcal I_h^{k+1} y - y\|_{\partial \mathcal T_h}+ Ch \|\nabla\cdot\bm q\|_{\mathcal T_h} + C\| \bm q - \bm{\Pi}_h^0 \bm q\|_{\mathcal T_h}\\
	&\quad+ C h^{-1}\|\bm p - \bm \Pi_h^k \bm p\|_{\mathcal T_h} + Ch^{- 2}\|z-\Pi_h^{k+1} z\|_{\mathcal T_h} +C  h^{-1/2} \| \bm p - \bm \Pi_h^{k} \bm p\|_{\partial \mathcal T_h}\\
	&\quad + Ch^{-3/2} (\|\Pi_h^{k+1} z - z\|_{\partial \mathcal T_h} + \|\mathcal I_h^{k+1}z- z\|_{\partial \mathcal T_h}).
	\end{align*}
\end{lemma}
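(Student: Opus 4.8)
The plan is to reduce the flux bound to the control error $\|u-u_h\|_{\mathcal E_h^\partial}$, which is already controlled by \Cref{erroruh}, at the cost of a single factor $h^{-1/2}$. Concretely, I would first prove the auxiliary estimate $\|\zeta_{\bm q}\|_{\mathcal T_h}\le C h^{-1/2}\|u-u_h\|_{\mathcal E_h^\partial}$, and then substitute the right-hand side of \Cref{erroruh}. Since every term in that bound is thereby multiplied by $h^{-1/2}$, the result matches the claimed estimate term by term; for instance the leading $Ch^{-1/2}\|\Pi_h^{k+1}y-y\|_{\mathcal T_h}$ becomes $Ch^{-1}\|\Pi_h^{k+1}y-y\|_{\mathcal T_h}$, and the $Ch^{-1}$ boundary terms in $z$ become $Ch^{-3/2}$, exactly as stated.

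To obtain the auxiliary estimate I would use the first error equation \eqref{eq_yh_yhu} as an energy identity, taking the test tuple $(\bm r_1,w_1,\mu_1)=(\zeta_{\bm q},\zeta_y,\zeta_{\widehat y})$. By the coercivity identity for $\mathscr B_1$ in \Cref{property_B}, the left-hand side equals $\varepsilon^{-1}\|\zeta_{\bm q}\|_{\mathcal T_h}^2$ plus the boundary contributions $\langle (h^{-1}+\tau_1-\tfrac12\bm\beta\cdot\bm n)(\zeta_y-\zeta_{\widehat y}),\zeta_y-\zeta_{\widehat y}\rangle_{\partial\mathcal T_h\backslash\mathcal E_h^\partial}$ and $\langle (h^{-1}+\tau_1-\tfrac12\bm\beta\cdot\bm n)\zeta_y,\zeta_y\rangle_{\mathcal E_h^\partial}$, together with $-\tfrac12(\nabla\cdot\bm\beta\,\zeta_y,\zeta_y)_{\mathcal T_h}$. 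The latter is nonnegative since $\nabla\cdot\bm\beta\le 0$ by \eqref{eqn:beta_assumptions1}, and for $h$ small the weight $h^{-1}+\tau_1-\tfrac12\bm\beta\cdot\bm n$ is positive, so all three boundary/volume contributions are nonnegative. Meanwhile the right-hand side of \eqref{eq_yh_yhu} is $-\langle u-u_h,\ \zeta_{\bm q}\cdot\bm n-(h^{-1}+\tau_1-\bm\beta\cdot\bm n)\zeta_y\rangle_{\mathcal E_h^\partial}$.

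It then remains to absorb the two boundary products on the right into the nonnegative quantities on the left. For the flux piece I would apply the discrete trace inequality $\|\zeta_{\bm q}\cdot\bm n\|_{\mathcal E_h^\partial}\le\|\zeta_{\bm q}\|_{\partial\mathcal T_h}\le C h^{-1/2}\|\zeta_{\bm q}\|_{\mathcal T_h}$ followed by Young's inequality, bounding $|\langle u-u_h,\zeta_{\bm q}\cdot\bm n\rangle_{\mathcal E_h^\partial}|$ by $\tfrac{1}{2\varepsilon}\|\zeta_{\bm q}\|_{\mathcal T_h}^2+Ch^{-1}\|u-u_h\|_{\mathcal E_h^\partial}^2$, the first summand being absorbed by $\varepsilon^{-1}\|\zeta_{\bm q}\|_{\mathcal T_h}^2$. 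For the remaining term, using $h^{-1}+\tau_1-\bm\beta\cdot\bm n\le Ch^{-1}$ and Young's inequality I would bound $|\langle u-u_h,(h^{-1}+\tau_1-\bm\beta\cdot\bm n)\zeta_y\rangle_{\mathcal E_h^\partial}|$ by a small multiple of $h^{-1}\|\zeta_y\|_{\mathcal E_h^\partial}^2$, absorbed by the $\mathcal E_h^\partial$ boundary term on the left, plus $Ch^{-1}\|u-u_h\|_{\mathcal E_h^\partial}^2$. What survives is $\|\zeta_{\bm q}\|_{\mathcal T_h}^2\le C h^{-1}\|u-u_h\|_{\mathcal E_h^\partial}^2$, i.e.\ the auxiliary estimate.

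The main point to watch is the half-order loss: it enters precisely at the inverse trace inequality that converts the boundary normal flux $\zeta_{\bm q}\cdot\bm n$ into the interior norm $\|\zeta_{\bm q}\|_{\mathcal T_h}$, and this is why the flux bound is half an order worse than the bounds for $u$, $y$, $\bm p$, $z$, and why the estimate is only stated under $k\ge 1$. Note that, in contrast to the $\zeta_z$ argument in the previous lemma, no duality or discrete Poincar\'e inequality is needed here, since $\mathscr B_1$ is directly coercive in $\zeta_{\bm q}$ through \Cref{property_B}; the whole estimate follows from the single energy identity together with the inverse trace inequality and Young's inequality. I would finish by inserting \Cref{erroruh} into the auxiliary bound and collecting like terms to recover the stated inequality.
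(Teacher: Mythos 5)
Your proposal is correct and follows essentially the same route as the paper: the paper likewise tests the first error equation \eqref{eq_yh_yhu} with $(\zeta_{\bm q},\zeta_y,\zeta_{\widehat y})$, invokes the energy identity of \Cref{property_B}, applies the discrete trace inequality to get $\norm{\zeta_{\bm q}}_{\mathcal T_h}\le Ch^{-1/2}\norm{u-u_h}_{\mathcal E_h^\partial}$, and then inserts \Cref{erroruh}. Your explicit Young's-inequality absorption of the two boundary products merely spells out what the paper leaves implicit in its final ``this gives'' step.
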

\begin{proof}
	\Cref{property_B} and the first error equation \eqref{eq_yh_yhu} give 
	\begin{align*}
	\hspace{1em}&\hspace{-1em}\mathscr B_1(\zeta_{\bm q},\zeta_y,\zeta_{\widehat y};\zeta_{\bm q},\zeta_y,\zeta_{\widehat y}) \\
	&=\varepsilon^{-1}(\zeta_{\bm q}, \zeta_{\bm q})_{{\mathcal{T}_h}}+\langle (h^{-1}+\tau_1 - \frac 12 \bm \beta \cdot\bm n) (\zeta_y-\zeta_{\widehat y}) , \zeta_y-\zeta_{\widehat y}\rangle_{\partial{{\mathcal{T}_h}}\backslash\mathcal E_h^\partial}\\
	&\quad -{\frac 1 2} (\nabla\cdot\bm{\beta} \zeta_y,\zeta_y)_{\mathcal T_h} +  \langle (h^{-1}+\tau_1 - \frac 12 \bm \beta \cdot\bm n) \zeta_y, \zeta_y \rangle_{\mathcal E_h^\partial} \\
	&= -\langle  u-u_h, \zeta_{\bm q}\cdot \bm{n} + (\bm{\beta}\cdot\bm n-h^{-1}-\tau_1) \zeta_y \rangle_{{\mathcal E_h^{\partial}}}\\
	&= -\langle  u-u_h, \zeta_{\bm q}\cdot \bm{n} - (h^{-1}+\tau_2) \zeta_y \rangle_{{\mathcal E_h^{\partial}}}\\
	&=-\langle u-u_h, \zeta_{\bm q}\cdot \bm{n} - (h^{-1}+\tau_2) \zeta_y \rangle_{{\mathcal E_h^{\partial}}}\\
	&	\le C \norm {u-u_h}_{\mathcal E_h^{\partial}} (\norm {\zeta_{\bm q}}_{\mathcal E_h^{\partial}} + h^{-1} \norm {\zeta_{y}}_{\mathcal E_h^{\partial}} )\\
	&\le C h^{-\frac 1 2}\norm {u-u_h}_{\mathcal E_h^{\partial}} (\norm {\zeta_{\bm q}}_{\mathcal T_h} + h^{-\frac 1 2} \norm {\zeta_{y}}_{\mathcal E_h^{\partial}}).
	\end{align*}
	This gives
	\begin{align*}
	\norm {\zeta_{\bm q}}_{\mathcal T_h} \le C  h^{-\frac 1 2}\norm {u-u_h}_{\mathcal E_h^{\partial}}.
	\end{align*}
	The desired result can be obtained by the above inequality and \Cref{erroruh}.
\end{proof}

The above lemma, the triangle inequality, \Cref{lemma:step3_conv_rates},  \Cref{lemma:step5_conv_rates},  the estimates in \eqref{classical_ine} and \Cref{erroruh} complete the proof of the main result, \Cref{main_res}.

\section{Numerical Experiments}
\label{sec:numerics}

We consider three  examples on a unit square domain $ \Omega = [0,1]\times[0,1]\subset\mathbb{R}^2 $,  and set $\gamma = 1$ and $\bm \beta = [-x_1^2\sin(x_2), \cos(x_1)e^{x_2}]$.  In examples 1 and 2, we computed the convergence rates without having an explicit solution of the optimality system. We numerically approximated the solution using a very fine mesh with $h=\sqrt{2} \times 2^{-9}$, and compared this reference solution against other solutions computed on meshes with larger $h$.

\begin{example}\label{example1}
	First, we test the high regularity case by setting $ f= 0$ and $y_d = 1$. The numerical  results are shown in \Cref{table_1,table_2,table_3,table_4}. Next, we test the low regularity case by setting $f=0$ and $y_d = (x_1^2+x_2^2)^{-1/3}$. The numerical  results are shown in \Cref{table_5,table_6,table_7,table_8}.
	
	The convergence rate for the control $u$ and the flux $\bm q$ in \Cref{example1}  match  our theoretical results when $k=1$, but are higher than our theoretical results for $k=0$. The convergence rates for other variables are higher than our theory. Similar phenomena was reported in \cite{HuMateosSinglerZhangZhang1,HuMateosSinglerZhangZhang2}. We also note that the numerically observed convergence rates are higher for IEDG fir $y$ and $z$ in the case $k=0$.

	\begin{table}
		\begin{center}
			\begin{tabular}{cccccc|c}
				\hline
				${h}/{\sqrt 2}$ &1/16& 1/32&1/64 &1/128 & 1/256 &\textup{EO} \\
				\hline
				$\|\bm q - \bm q_h\|_{\mathcal T_h}$ &3.344E-01   &2.642E-01   &1.847E-01   &1.191E-01  & 7.230E-02\\
				order&-&    0.34  & 0.52&   0.63   &0.72&-
				\\
				\hline
				$\|\bm p- \bm p_h\|_{\mathcal T_h}$&   1.056E-01   &6.199E-02   &3.277E-02   &1.667E-02   &8.368E-03
				\\
				order&-&    0.77  & 0.92   &0.97   &0.99&0.5
				\\
				\hline
				$\norm{{y}-{y}_h}_{\mathcal T_h}$&   1.203E-01   &6.647E-02   &3.492E-02   &1.768E-02   &8.679E-03
				\\
				order&-&    0.85   &0.93   &0.98   &1.02&0.5
				\\
				\hline
				$\norm{{z}-{z}_h}_{\mathcal T_h}$&   1.371E-02   &3.955E-03   &1.464E-03   &6.427E-04   &2.972E-04
				\\
				order&-&    1.79  &1.43  &1.18&1.11 &0.5
				\\
				\hline
				$\norm{{ u}-{ u}_h}_{\mathcal E_h^\partial}$&   3.924E-01   &2.567E-01   &1.481E-01   &7.930E-02   &4.023E-02
				\\
				order&-&   0.61  &0.79   &0.90  &0.98&0.5\\
				\hline
			\end{tabular}
		\end{center}
		\caption{\Cref{example1}, high regularity test, $k=0$ and EDG: Errors, observed convergence orders, and expected order (EO) for the control $u$, the state $y$,  the dual state $z$ and their fluxes  $\bm q$ and $\bm p$.}\label{table_1}
	\end{table}
	\begin{table}
		\begin{center}
			\begin{tabular}{cccccc|c}
				\hline
				${h}/{\sqrt 2}$ &1/2& 1/4&1/8 &1/16 & 1/32 &\textup{EO} \\
				\hline
				$\|\bm q - \bm q_h\|_{\mathcal T_h}$ & 2.862E-01   &2.051Ee-01   &1.4036E-01   &9.019E-02   &5.552E-02\\
				order&-&    0.48  & 0.55   &0.64   &0.70&-\\
				\hline
				$\|\bm p- \bm p_h\|_{\mathcal T_h}$&   8.072E-02   &4.827E-02   &2.701E-02 & 1.471E-02   &7.754E-03
				\\
				order&-& 0.74   &0.83   &0.87   &0.92& 0.5\\
				\hline
				$\norm{{y}-{y}_h}_{\mathcal T_h}$& 4.608E-02   &1.759E-02   &5.644E-03   &1.988E-03   &6.866E-04 \\
				order&-&    1.38   &1.64   &1.50   &1.53 & 0.5\\
				\hline
				$\norm{{z}-{z}_h}_{\mathcal T_h}$&      2.053E-02   &6.196E-03   &1.482E-03   &3.220E-04   &6.930E-05
				
				\\
				order&-&    1.72  &2.06   &2.20   &2.21&0.5
				\\
				\hline
				$\norm{{ u}-{ u}_h}_{\mathcal E_h^\partial}$&   1.183E-01   &6.745E-02   &3.904E-02   &2.184E-02   &1.179E-02
				\\
				order&-&   0.81  &0.78   &0.83  &0.88&0.5\\
				\hline
			\end{tabular}
		\end{center}
		\caption{\Cref{example1}, high regularity test,  $k=0$ and IEDG: Errors, observed convergence orders, and expected order (EO) for the control $u$, the state $y$,  the dual state $z$ and their fluxes  $\bm q$ and $\bm p$.}\label{table_2}
	\end{table}

	\begin{table}
		\begin{center}
			\begin{tabular}{cccccc|c}
				\hline
				${h}/{\sqrt 2}$  &1/2& 1/4&1/8 &1/16 & 1/32 &\textup{EO} \\
				\hline
				$\|\bm q - \bm q_h\|_{\mathcal T_h}$ &   1.887E-01   &1.056E-01  & 5.596E-02  & 2.869E-02   &1.446E-02
				\\
				order&-&   0.83   &0.91 &0.96   &0.99&1.0\\
				\hline
				$\|\bm p- \bm p_h\|_{\mathcal T_h}$&   2.714E-02   &9.111E-03   &2.737E-03   &7.822E-04   &2.176E-04
				\\
				order&-&    1.57  &1.73&   1.80&   1.84&1.5\\
				\hline
				$\norm{{y}-{y}_h}_{\mathcal T_h}$&   1.693E-02   &4.892E-03   &1.263E-03   &3.207E-04   &8.168E-05
				\\
				order&-&    1.79   &1.95& 1.97& 1.97
				& 1.5\\
				\hline
				$\norm{{z}-{z}_h}_{\mathcal T_h}$&   2.144E-03  & 3.460E-04   &5.120E-05  & 7.168E-06   &9.818E-07
				\\
				order&-&    2.63&2.75   &2.83   &2.86
				& 1.5\\
				\hline
				$\norm{{ u}-{ u}_h}_{\mathcal E_h^\partial}$&   8.742E-02   &3.528E-02   &1.332E-02   &4.856E-03   &1.730E-03
				\\
				order&-&    1.30  &1.40   &1.45   &1.48&1.5\\
				\hline
			\end{tabular}
		\end{center}
		\caption{\Cref{example1}, high regularity test, $k=1$ and EDG: Errors, observed convergence orders, and expected order (EO) for the control $u$, the state $y$,  the dual state $z$ and their fluxes  $\bm q$ and $\bm p$.}\label{table_3}
	\end{table}

	\begin{table}
		\begin{center}
			\begin{tabular}{cccccc|c}
				\hline
				${h}/{\sqrt 2}$ & 1/2& 1/4&1/8 &1/16 & 1/32 &\textup{EO} \\
				\hline
				$\|\bm q - \bm q_h\|_{\mathcal T_h}$ &   1.681E-01   &9.613E-02   &5.145E-02   &2.656E-02   &1.346E-02
				\\
				order&-&   0.80   &0.90   &0.95&   0.98& 1.0\\
				\hline
				$\|\bm p- \bm p_h\|_{\mathcal T_h}$&   2.602E-02   &8.400E-03   &2.514E-03   &7.230E-04   &2.026E-04
				\\
				order&-&    1.63& 1.74&1.79&   1.83& 1.5\\
				\hline
				$\norm{{y}-{y}_h}_{\mathcal T_h}$&   1.206E-02   &3.290E-03   &8.556E-04   &2.197E-04  & 5.589E-05
				\\
				order&-&    1.87&1.94&1.96&1.97
				& 1.5\\
				\hline
				$\norm{{z}-{z}_h}_{\mathcal T_h}$&   2.438E-03   &4.037E-04   &5.591E-05   &7.421E-06   &9.718E-07
				\\
				order&-&    2.59&2.85&  2.91&   2.93
				& 1.5\\
				\hline
				$\norm{{ u}-{ u}_h}_{\mathcal E_h^\partial}$&   4.711E-02   &1.883E-02   &7.101E-03   &2.612E-03   &9.466E-04
				\\
				order&-&   1.32&1.40  &1.44&1.46
				&1.5\\
				\hline
			\end{tabular}
		\end{center}
		\caption{\Cref{example1},  high regularity test, $k=1$ and IEDG: Errors, observed convergence orders, and expected order (EO) for the control $u$, the state $y$,  the dual state $z$ and their fluxes  $\bm q$ and $\bm p$.}\label{table_4}
	\end{table}

	\begin{table}
		\begin{center}
			\begin{tabular}{cccccc|c}
				\hline
				${h}/{\sqrt 2}$ &1/2& 1/4&1/8 &1/16 & 1/32 &\textup{EO} \\
				\hline
				$\|\bm q - \bm q_h\|_{\mathcal T_h}$ &   4.127E-01  & 3.277E-01   &2.512E-01   &1.909E-01   &1.440E-01
				\\
				order&-&   0.33   &0.38   &0.39   &0.40& -\\
				\hline
				$\|\bm p- \bm p_h\|_{\mathcal T_h}$&   1.398E-01   &8.175E-02   &4.388E-02   &2.275E-02   &1.159E-02
				\\
				order&-&    0.77   &0.89   &0.94 &0.97& 0.5\\
				\hline
				$\norm{{y}-{y}_h}_{\mathcal T_h}$&   1.545E-01   &8.605E-02   &4.654E-02   &2.429E-02   &1.223E-02
				\\
				order&-&   0.84  &0.88&0.94&0.99 & 0.5\\
				\hline
				$\norm{{z}-{z}_h}_{\mathcal T_h}$&   2.072E-02   &6.698E-03   &2.430E-03   &1.002E-03   &4.478E-04
				\\
				order&-&    1.62&1.46&   1.27&   1.16& 0.5\\
				\hline
				$\norm{{ u}-{ u}_h}_{\mathcal E_h^\partial}$&   5.161E-01   &3.236E-01   &1.888E-01   &1.056E-01   &5.706E-02
				\\
				order&-&    0.67  &0.77  &0.83  & 0.88&0.5\\
				\hline
			\end{tabular}
		\end{center}
		\caption{\Cref{example1}, low regularity test, $k=0$ and EDG: Errors, observed convergence orders, and expected order (EO) for the control $u$, the state $y$,  the dual state $z$ and their fluxes  $\bm q$ and $\bm p$.}\label{table_5}
	\end{table}
	\begin{table}
		\begin{center}
			\begin{tabular}{cccccc|c}
				\hline
				${h}/{\sqrt 2}$ &1/2& 1/4&1/8 &1/16 & 1/32&\textup{EO} \\
				\hline
				$\|\bm q - \bm q_h\|_{\mathcal T_h}$ &3.600E-01   &2.788E-01   &2.202E-01   &1.729E-01   &1.372E-01\\
				order&-&    0.36   &0.34 &0.34& 0.33&-
				\\
				\hline
				$\|\bm p- \bm p_h\|_{\mathcal T_h}$&   1.072E-01   &6.488E-02   &3.701E-02   &2.036E-02   &1.080E-02
				\\
				order&-&    0.72  &0.80   &0.86&0.91& 0.5\\
				\hline
				$\norm{{y}-{y}_h}_{\mathcal T_h}$&   5.983E-02   &2.280E-02  & 8.000E-03   &3.088E-03   &1.230E-03
				\\
				order&-&    1.39   &1.51  &1.37 &1.32& 0.5\\
				\hline
				$\norm{{z}-{z}_h}_{\mathcal T_h}$&   2.883E-02   &9.238E-03   &2.387E-03   &5.672E-04 &  1.314E-04
				\\
				order&-&    1.64  &1.95   &2.07  &2.10& 0.5\\
				\hline
				$\norm{{ u}-{ u}_h}_{\mathcal E_h^\partial}$&   1.511E-01   &8.549E-02   &5.284E-02   &3.198E-02   &1.911E-02
				\\
				order&-&    0.82  &0.69   &0.72   &0.74
				&0.5\\
				\hline
			\end{tabular}
		\end{center}
		\caption{\Cref{example1},  low regularity test, $k=0$ and IEDG: Errors, observed convergence orders, and expected order (EO) for the control $u$, the state $y$,  the dual state $z$ and their fluxes  $\bm q$ and $\bm p$.}\label{table_6}
	\end{table}

	\begin{table}
		\begin{center}
			\begin{tabular}{cccccc|c}
				\hline
				${h}/{\sqrt 2}$  &1/2& 1/4&1/8 &1/16 & 1/32 &\textup{EO} \\
				\hline
				$\|\bm q - \bm q_h\|_{\mathcal T_h}$ &   2.486E-01   &1.772E-01   &1.347E-01   &1.041E-01   &7.919E-02\\
				order&-&    0.48   &0.40   &0.37  &0.39
				& 0.33\\
				\hline
				$\|\bm p- \bm p_h\|_{\mathcal T_h}$&   3.467E-02   &1.305E-02   &4.971E-03   &1.941E-03   &7.691E-04
				\\
				order&-&    1.40&1.39   &1.35&1.33& 0.83\\
				\hline
				$\norm{{y}-{y}_h}_{\mathcal T_h}$&   1.837E-02   &5.972E-03   &2.199E-03   &8.997E-04   &3.726E-04
				\\
				order&-&    1.62   &1.44&1.28&1.27& 0.83\\
				\hline
				$\norm{{z}-{z}_h}_{\mathcal T_h}$&   5.359E-03   &1.333E-03   &3.007E-04   &6.417E-05   &1.333E-05
				\\
				order&-&    2.00   &2.14&   2.22&   2.26& 0.83\\
				\hline
				$\norm{{ u}-{ u}_h}_{\mathcal E_h^\partial}$&   9.307E-02   &4.230E-02   &2.254E-02   &1.288E-02   &7.352E-03
				\\
				order&-&    1.13   &0.90   &0.80  &0.80& 0.83
				\\
				\hline
			\end{tabular}
		\end{center}
		\caption{\Cref{example1},  low regularity test, $k=1$ and EDG: Errors, observed convergence orders, and expected order (EO) for the control $u$, the state $y$,  the dual state $z$ and their fluxes  $\bm q$ and $\bm p$.}\label{table_7}
	\end{table}

	\begin{table}
		\begin{center}
			\begin{tabular}{cccccc|c}
				\hline
				${h}/{\sqrt 2}$  &1/2& 1/4&1/8 &1/16 & 1/32 &\textup{EO} \\
				\hline
				$\|\bm q - \bm q_h\|_{\mathcal T_h}$ &   2.431E-01   &1.786E-01   &1.372E-01   &1.072E-01   &8.321E-02
				\\
				order&-&  0.44  &0.38   &0.35   &0.36& 0.33\\
				\hline
				$\|\bm p- \bm p_h\|_{\mathcal T_h}$&   3.411E-02   &1.270E-02   &4.868E-03   &1.905E-03   &7.546E-04
				\\
				order&-&    1.42   &1.38 &1.35&1.33
				& 0.83\\
				\hline
				$\norm{{y}-{y}_h}_{\mathcal T_h}$&   1.535E-02   &5.202E-03   &1.869E-03   &7.186E-04   &2.848E-04
				\\
				order&-&    1.56&1.47&1.37  & 1.33& 0.83\\
				\hline
				$\norm{{z}-{z}_h}_{\mathcal T_h}$&   5.255E-03   &1.277E-03   &2.825E-04   &5.940E-05   &1.212E-05
				\\
				order&-&    2.04  &2.17  &2.24  & 2.29& 0.83\\
				\hline
				$\norm{{ u}-{ u}_h}_{\mathcal E_h^\partial}$&   6.326E-02   &3.179E-02   &1.716E-02   &9.636E-03   &5.458E-03
				\\
				order&-&    1.00   &0.89 & 0.83  & 0.82
				&0.83\\
				\hline
			\end{tabular}
		\end{center}
		\caption{\Cref{example1},  low regularity test, $k=1$ and IEDG: Errors, observed convergence orders, and expected order (EO) for the control $u$, the state $y$,  the dual state $z$ and their fluxes  $\bm q$ and $\bm p$.}\label{table_8}
	\end{table}
\end{example}

\begin{example}
	Next, we demonstrate the performance of the EDG and IEDG methods in the convection dominated case. We do not  compute the  convergence rates here; instead for illustration we plot the state $y_h$ in \Cref{computed_state}. Moreover, we also plot the approximate state computed using the CG method. All computations are on the same mesh with $h = \sqrt{2} \times 2^{-8}$ and the data chosen as
	\begin{align*}
	\varepsilon = 10^{-6},\quad f = x_1x_2, \ \ \textup{and} \ \  y_d = 1. 
	\end{align*}
	\begin{figure}
		\centerline{
			\hbox{\includegraphics[width=0.31\textwidth]{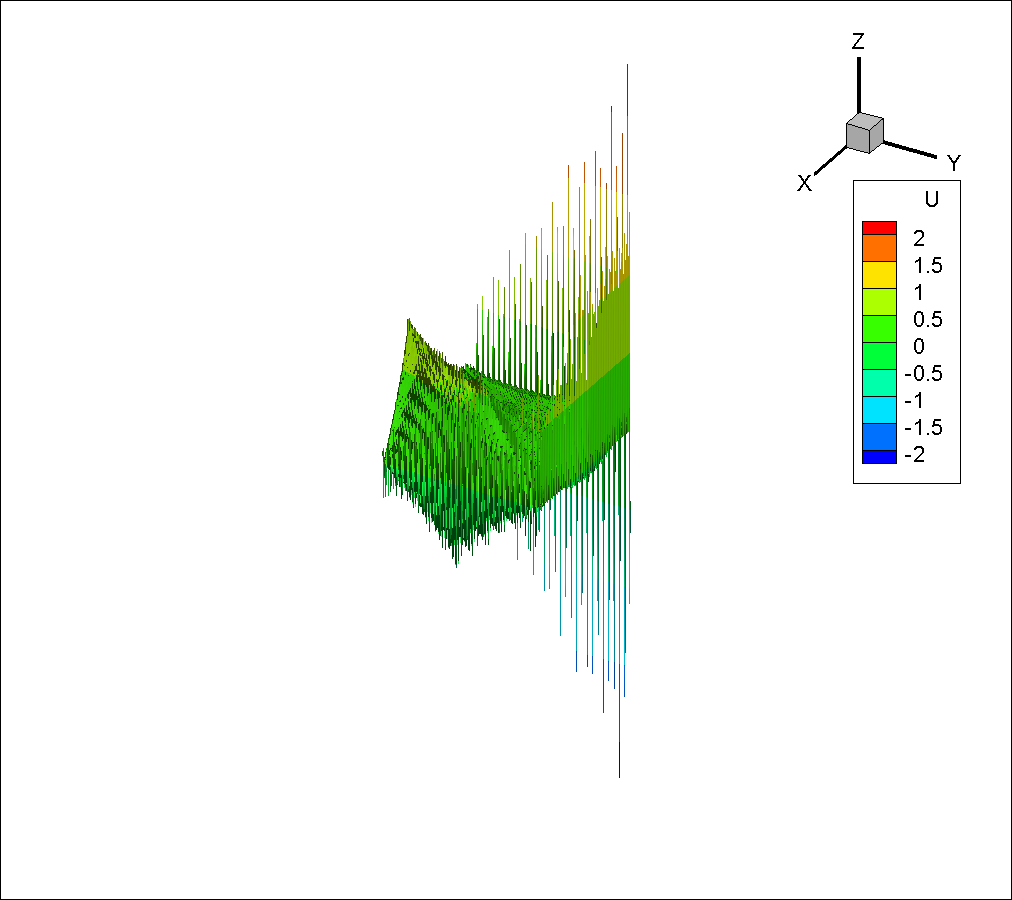}}
			\hbox{\includegraphics[width=0.31\textwidth]{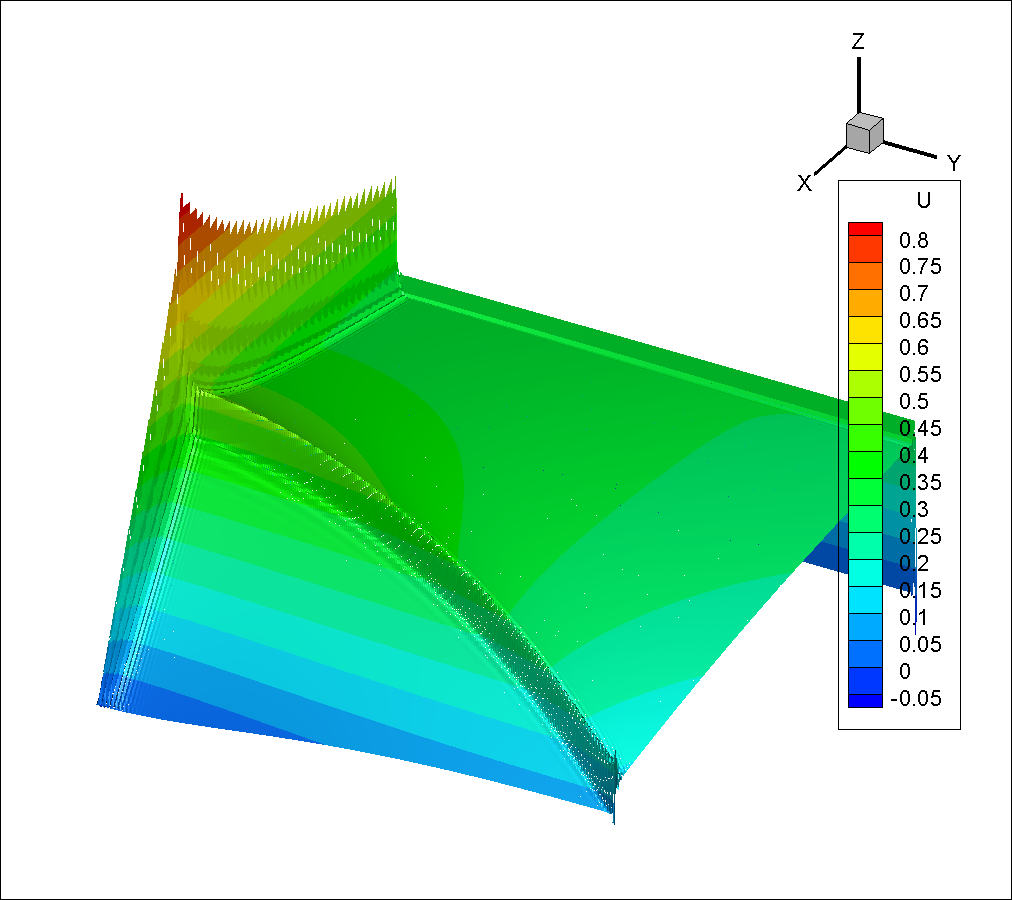}}
			\hbox{\includegraphics[width=0.31\textwidth]{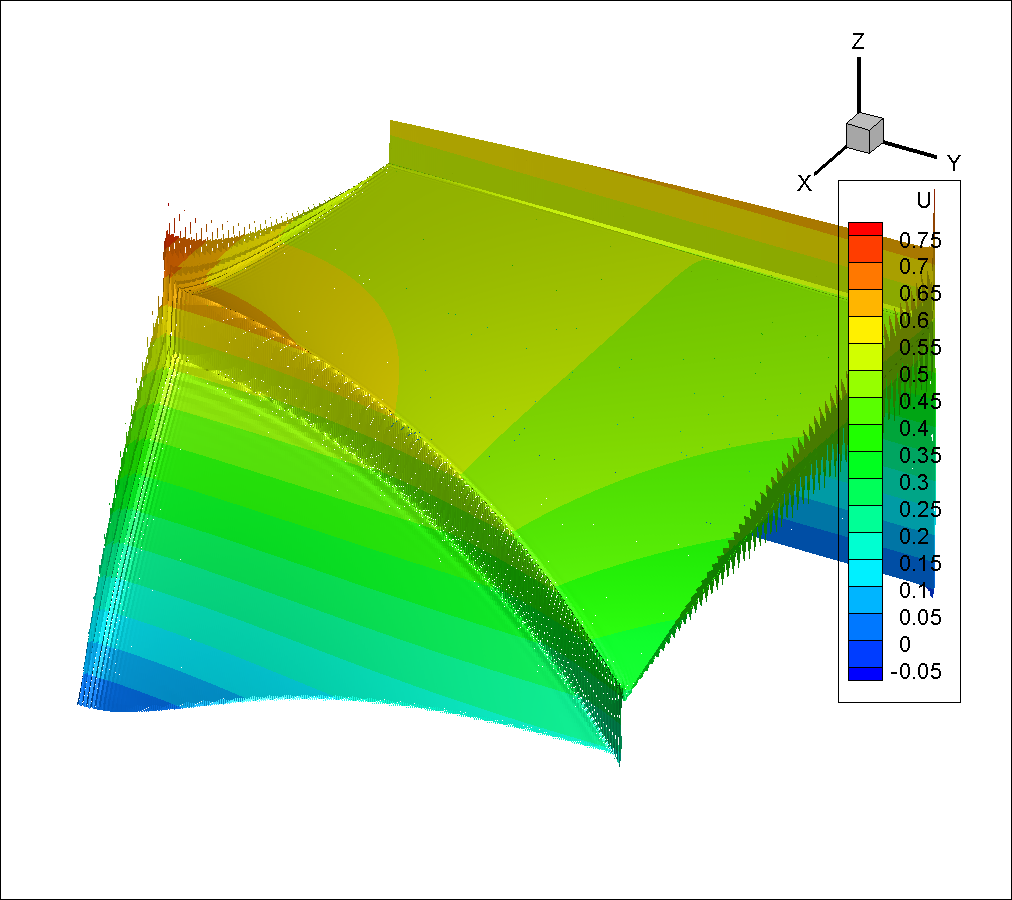}}
		}
		\caption{The computed state $y_h$ by CG (left), EDG (middle) and IEDG (right).}
		\label{computed_state}
		\centering
	\end{figure}
	
	We observe that the approximate state computed by the  CG method is highly oscillatory, but we only have a small oscillation near the sharp change with the  EDG and IEDG methods. Furthermore, the oscillations in the IEDG solutions are slightly smaller than in the EDG solution.
\end{example}

\section{Conclusion}

In this work, we approximate the solution of a convection diffusion Dirichlet boundary control problem by EDG and IEDG methods. We obtained an optimal convergence rate for the control for both high regularity and low regularity cases. Instead of introducing a special projection as in \cite{HuMateosSinglerZhangZhang2}, we used an improved trace inequality for the low regularity case. This simplified the analysis. Finally, some numerical experiments showed that the EDG and IEDG methods are suitable for convection dominated problems. It is worth mentioning that the number of degrees of freedom of EDG and IEDG methods are lower than the HDG method.

\bibliographystyle{plain}
\bibliography{/Users/yangwenzhang/Desktop/Research/Bib/Model_Order_Reduction,/Users/yangwenzhang/Desktop/Research/Bib/Dirichlet_Boundary_Control,/Users/yangwenzhang/Desktop/Research/Bib/Ensemble,/Users/yangwenzhang/Desktop/Research/Bib/HDG,/Users/yangwenzhang/Desktop/Research/Bib/EDG,/Users/yangwenzhang/Desktop/Research/Bib/Interpolatory,/Users/yangwenzhang/Desktop/Research/Bib/Mypapers,/Users/yangwenzhang/Desktop/Research/Bib/Added}

\end{document}